\documentclass[11pt,reqno]{amsart}
\usepackage[utf8]{inputenc}
\usepackage{enumitem}
\usepackage{amssymb}
\usepackage{mathtools}
\usepackage{pdfsync}
\usepackage[english]{babel}
\usepackage[pdftex,pagebackref,colorlinks=true,urlcolor=blue,linkcolor=blue,citecolor=red]{hyperref}
\usepackage{fullpage}
\usepackage{color}
\usepackage{amsmath}
\usepackage{amsfonts}
\usepackage{mathrsfs}
\usepackage{t1enc, graphicx}
\usepackage{verbatim}
\usepackage{bbm}
\usepackage[colorinlistoftodos]{todonotes}
\usepackage{bm}
\usepackage{upgreek}
\usepackage[mathcal]{eucal}
\usepackage{tikz-cd}
\usepackage{adjustbox}
\usepackage{marginnote}
\usepackage[nocompress,noadjust]{cite}
\usepackage{dsfont}
\usepackage{aliascnt}
\usepackage[capitalize]{cleveref}

\newcommand{\C}{\mathbb{C}}

\newcommand{\Q}{\mathbb{Q}}

\newcommand{\Z}{\mathbb{Z}}
\newcommand{\N}{\mathbb{N}}

\newcommand{\RP}{\mathbf{RP}}

\newcommand{\Krat}{\mathbf{K}_{\operatorname{rat}}}

\newcommand{\ecube[1]}{[\hspace{-1.5pt} [ {#1} ]\hspace{-1.5pt}]} 

\newcommand{\bQ}{\bm{Q}}

\newcommand{\bx}{{\boldsymbol x}}
\newcommand{\by}{{\boldsymbol y}}

\newcommand{\bz}{{\boldsymbol z}}

\newcommand{\cB}{{\mathcal B}}
\newcommand{\cF}{{\mathcal F}}
\newcommand{\cC}{{\mathcal C}}
\newcommand{\cP}{{\mathcal P}}

\newcommand{\cO}{{\mathcal O}}

\DeclareMathOperator{\vdc}{-vdC}

\newtheorem{theorem}{Theorem}[section]
\crefname{theorem}{Theorem}{Theorems}

\newaliascnt{proposition}{theorem}
\newtheorem{proposition}[proposition]{Proposition}
\aliascntresetthe{proposition}
\crefname{proposition}{Proposition}{Propositions}

\newaliascnt{conjecture}{theorem}
\newtheorem{conjecture}[conjecture]{Conjecture}
\aliascntresetthe{conjecture}
\crefname{conjecture}{Conjecture}{Conjectures}
\newtheorem*{conjecture*}{Conjecture}

\newaliascnt{problem}{theorem}

\aliascntresetthe{problem}
\crefname{problem}{Problem}{Problems}

\newaliascnt{lemma}{theorem}
\newtheorem{lemma}[lemma]{Lemma}
\aliascntresetthe{lemma}
\crefname{lemma}{Lemma}{Lemmas}

\newaliascnt{claim}{theorem}
\newtheorem{claim}[claim]{Claim}
\aliascntresetthe{claim}
\crefname{claim}{Claim}{Claims}

\newaliascnt{corollary}{theorem}
\newtheorem{corollary}[corollary]{Corollary}
\aliascntresetthe{corollary}
\crefname{corollary}{Corollary}{Corollaries}
\newtheorem*{corollary*}{Corollary}

\newtheorem*{theorem*}{Theorem}

\theoremstyle{definition}

\newaliascnt{definition}{theorem}
\newtheorem{definition}[definition]{Definition}
\aliascntresetthe{definition}
\crefname{definition}{Definition}{Definitions}
\newtheorem*{definition*}{Definition}

\newaliascnt{question}{theorem}

\aliascntresetthe{question}
\crefname{question}{Question}{Questions}

\newtheorem{maintheorem}{Theorem}

\crefname{maintheorem}{Theorem}{Theorems}

\theoremstyle{remark}

\newaliascnt{example}{theorem}
\newtheorem{example}[example]{Example}
\aliascntresetthe{example}
\crefname{example}{Example}{Examples}

\newaliascnt{remark}{theorem}

\aliascntresetthe{remark}
\crefname{remark}{Remark}{Remarks}

\newaliascnt{exercise}{theorem}

\aliascntresetthe{exercise}
\crefname{exercise}{Exercise}{Exercises}
\title{A saturation theorem for distinct-degree polynomials and an application to joint transitivity}

\date{}

 \author{Axel \'Alvarez}
 \address[Axel \'Alvarez]{Departamento de Ingenier\'{\i}a Matem\'atica and Centro de Modelamiento Matem{\'a}tico, Universidad de Chile \& IRL 2807 - CNRS, Beauchef 851, Santiago, Chile} \email{aalvarez@dim.uchile.cl}

\begin{document}

\subjclass[2020]{Primary: 37B05; Secondary: 37B02, 37B20}

\keywords{Topological dynamics, nilsystems, recurrence.}

\begin{abstract}
    We prove, modulo almost one-to-one extensions, that the topological rational Kronecker factors of a minimal $\Z^k$-system are the topological characteristic factors for commuting transformations along polynomials with distinct degrees. As an application, we obtain necessary and sufficient conditions for the joint transitivity of these polynomial iterates.
\end{abstract}

\maketitle

\section{Introduction}

\subsection{The joint ergodicity problem}

The study of multiple ergodic averages was initiated in the seminal work of Furstenberg \cite{Furstenberg_ergodic_szemeredi:1977}, where an ergodic-theoretic proof of Szemerédi’s theorem on arithmetic progressions was given. Since then, many variants of multiple ergodic averages have been studied. A fundamental class is given by polynomial iterates for commuting transformations, namely averages of the form\begin{align*}
    \dfrac{1}{N} \sum_{n=1}^{N} T_1^{p_1(n)}f_1\cdot \ldots \cdot T_k^{p_k(n)}f_k
\end{align*}

where $f_1,\dots,f_k\in L^{\infty}$, $p_1,\dots,p_k\in \Z[t]$ and $T_1,\dots,T_k$ are commuting invertible measure preserving transformations of a probability space $(X,\mu)$. 

The norm convergence of such averages was proved in generality by Walsh \cite{Walsh12}, who established the convergence in $L^2$ for multiple polynomial averages associated with nilpotent groups of measure preserving transformations. Once convergence is known, a natural question is to determine when the limit is the expected one, that is, the product of the integrals of the functions. This is the joint ergodicity problem. More generally, one may ask the same question for integer-valued sequences of iterates.

\begin{definition}
    Let $(X,\mu,T_1,\dots,T_k)$ be a measure preserving system with commuting and invertible transformations and let $(a_1(n))_n, \dots,$ $ (a_k(n))_n$ be integer-valued sequences. We say that $(T_1^{a_1(n)})_n,\dots,(T_k^{a_k(n)})_n$ are \emph{jointly ergodic} (for $\mu$) if for any $f_1,\dots,f_k\in L^\infty(\mu)$, we have\begin{align*}
        \lim_{N\to\infty}\dfrac{1}{N} \sum_{n=1}^{N} T_1^{a_1(n)}f_1\cdot \ldots \cdot T_k^{a_k(n)}f_k = \int_X f_1\, d\mu \cdot \ldots \cdot \int_X f_k\, d\mu,
    \end{align*} 

    where the limit is taken in $L^2(\mu)$. When $k=1$, we also say that $(T_1^{a_1(n)})_n$ is \emph{ergodic} for $\mu$.
\end{definition}

The first characterization of joint ergodicity for linear iterates is due to Berend and Bergelson \cite{Berend_Bergelson_joint_ergodicity:1984}. In recent years, this problem has attracted considerable attention beyond the linear case, and has been studied extensively for several other classes of sequences (for example, see \cite{Donoso_Ferre_Koutsogiannis_Sun_multicorr_joint_erg:2024,Donoso_Koutsogiannis_Sun_joint_erg_poly_growth:2023,Donoso_Koutsogiannis_Sun_seminorms_polynomials_joint_ergodicity:2022,Donoso_Koutsogiannis_Kuca_Sun_Tsinas_resolving_joint_ergodicity_Hardy:2025,Donoso_Koutsogiannis_Kuca_Sun_Tsinas_seminorm_joint_ergodicity_independent_Hardy:2025,Frantzikinakis_Kuca_joint_erg_comm_poly:2025,Bergelson_Leibman_Son_joint_erg_generalized_linear:2016,Tsinas_joint_erg_Hardy:2023,Frantzikinakis_joint_ergodicity_sequences:2023,Koutsogiannis_Sun_total_joint_ergodicity:2023,Frantzikinakis_joint_erg_primes:2022,Frantzikinakis_Kuca_seminorm_control_pairwise_dependent_pol:2023}).

\subsection{Topological characteristic factors}

In the study of multiple ergodic averages, characteristic factors play a crucial role. The notion of characteristic factors was first introduced in a paper by Furstenberg and Weiss \cite{Furstenberg_Weiss_ergodic_thm_double:1996}, and its relevance was solidified in the groundbreaking work of Host and Kra \cite{Host_Kra_nonconventional_averages_nilmanifolds:2005}. More recently, characteristic factors have also played an important role in the study of joint ergodicity.

A topological counterpart of characteristic factors was first studied by Glasner \cite{Glasner_top_erg_decomposition:1994}.  In \cite{Glasner_Huang_Shao_Weiss_Ye_Topological_characteristic_factors:2020}, the authors significantly advanced Glasner’s earlier work on topological characteristic factors by showing that, up to almost one-to-one extensions, the maximal $\infty$-step pro-nilfactor of a minimal system is a topological characteristic factor along arithmetic progressions. More precisely, for a dense G$_\delta$ set of points, the orbit closure of $\{(T^nx,\dots,T^{dn}x):n\in \Z\}$ contains all points whose projections to the maximal $\infty$-step pro-nilfactor lie in the corresponding orbit closure on the factor.  In \cite{Qiu_poly_orbits_tot_minimal:2023}, Qiu extended this result to distinct nonconstant polynomial iterates. This was later refined by Ye and Yu \cite{Ye_Yu_polynomial_saturation:2025}, who showed that, for a fixed polynomial family, one can replace the maximal $\infty$-step pro-nilfactor by a suitable $k$-step pro-nilfactor, where $k$ depends only on the family of polynomials. The result of \cite{Glasner_Huang_Shao_Weiss_Ye_Topological_characteristic_factors:2020} has inspired extensive research on topological characteristic factors (see for instance \cite{Alvarez_regionally_proximal_tfc_group_actions:2026,Qiu_poly_orbits_tot_minimal:2023,Qiu_Yu_saturated_cubes_measure:2023,Shao_Xu_saturation_R_flows:2025,Qiu_Xu_Ye_Yu_saturation_product:2025,Wu_Yu_saturation_product_pol:2026}).

The results of \cite{Glasner_Huang_Shao_Weiss_Ye_Topological_characteristic_factors:2020,Qiu_poly_orbits_tot_minimal:2023,Ye_Yu_polynomial_saturation:2025} deal with systems generated by a single transformation. The case of several commuting transformations is much less developed in the topological setting. This contrasts with the measurable setting, where characteristic factors for multiple averages involving commuting transformations have been extensively studied. In particular, Chu, Frantzikinakis, and Host \cite{Chu_Frantzikinakis_Host_ergodic_averages_distinct_degree:2011} proved such a result for polynomial averages of distinct degrees. Later, Frantzikinakis and Kuca {\cite[Theorem 2.10]{Frantzikinakis_Kuca_joint_erg_comm_poly:2025}} improved this result by showing that the rational Kronecker factor of each transformation is characteristic. Under transitivity assumptions, the following theorem can be seen as a topological version of the result of Chu, Frantzikinakis, and Host, with the improvement obtained by Frantzikinakis and Kuca.

\begin{maintheorem}\label{thm: TFC_distinct_deg}
    Let $(X,S_1,\dots,S_k)$ be a minimal $\Z^k$-system, let $T_1,\dots,T_d\in \langle S_{1},\dots,S_{k}\rangle$ be transitive transformations, and let $p_1,\dots,p_d$ be nonconstant integer polynomials with distinct degrees such that $p_{i}(0)=0$ for $i=1,\dots, d$. Then there is a dense G$_\delta$ subset $\Omega$ of $X$ such that, for every $x\in \Omega$ and $x_1,\dots,x_d\in X$ satisfying, for each $j=1,\dots,d$, $(x,x_j)\in \Krat(X,T_i)$\footnote{See \cref{subsec: Krat} for the definition.} for some $i\in \{1,\dots,d\}$, one has \begin{align*}
        (x_1,\dots,x_d)\in \overline{\{ (T_1^{p_1(n)}x,\dots,T_d^{p_d(n)}x):n\in\Z \}},
    \end{align*}
\end{maintheorem}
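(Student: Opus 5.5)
Throughout, I read $\Krat(X,T)$ as the relation identifying points with the same image in the maximal rational (finite-cyclic) equicontinuous factor of $(X,T)$: $(x,y)\in\Krat(X,T)$ iff for every $m\ge1$, $x$ and $y$ lie in the same $T^m$-minimal (equivalently, $T^m$-orbit-closure) component. The plan is to prove the result by induction on $d$, ordering the polynomials so that $\deg p_1<\dots<\deg p_d$. The induction is run simultaneously with a "relative" statement in which the base point $x$ and the targets $x_1,\dots,x_d$ may be replaced by open sets. The central object is
\[
N(x,U_1,\dots,U_d)=\{n\in\Z: T_i^{p_i(n)}x\in U_i \text{ for all } i\}.
\]
Density of the $G_\delta$ set $\Omega$ reduces to showing that, for each tuple $(V_1,\dots,V_d)$ from a countable base, the set of $x$ for which either $N(x,V_1,\dots,V_d)\neq\emptyset$ or no admissible $(x_1,\dots,x_d)$ lies in $V_1\times\dots\times V_d$ contains a dense open set. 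This is a Baire-category reduction in the style of \cite{Glasner_Huang_Shao_Weiss_Ye_Topological_characteristic_factors:2020}. The $\Krat$ condition is closed and semicontinuous in $x$, which will be checked using the $T^m$-minimal component decomposition. Since $X$ is minimal under the $\Z^k$-action and $T_i$ commutes with everything, it suffices to produce one good open set of $x$ and then translate it by $\langle S_1,\dots,S_k\rangle$.

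For the key step, take the top-degree polynomial $p_d$ and apply a PET/van der Corput-type differencing in topological form. Given $x$ near a point $x'$ and a shift $m$, compare the tuple at $n$ with the tuple at $n+m$. The differenced polynomials $p_i(n+m)-p_i(n)$ for $i<d$ have degree $<\deg p_i$ and stay distinct in degree, while $p_d$ keeps its leading degree. The aim is to show that the return-time sets $N(x,U_1,\dots,U_d)$ are syndetic-like, or at least nonempty, by reducing to the $(d-1)$-case for the system $T_1,\dots,T_{d-1}$ together with an ergodic-type statement for the single transformation $T_d^{p_d(n)}$ restricted to a suitable piece. For the single-polynomial step I would use the known fact, via Qiu / Ye--Yu saturation for one transformation, that for transitive $T$ and $p(0)=0$ the set $\overline{\{T^{p(n)}x\}}$ contains the $\Krat(X,T)$-class of $x$ for generic $x$. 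Obstructions come only from finite-index periodicity, since the other nilfactor obstructions are killed by distinct degrees. This is exactly why only rational Kronecker factors appear.

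The distinct-degree structure is what lets the coordinates decouple. I would pass to an arithmetic progression $n=qn'+r$ where the periods coming from the $\Krat$ classes are fixed; this is possible because each $x_j$ is $\Krat(X,T_i)$-related to $x$. Along the progression, the $\Krat$ condition becomes genuine membership in a totally-minimal-like piece. On that piece I would show, using a nilfactor-style argument, that the joint orbit closure of the distinct-degree tuple is the product of the individual orbit closures. The natural tool is a topological analogue of the Chu--Frantzikinakis--Host seminorm estimate: run the dual-sequence / cube-structure argument of \cite{Glasner_Huang_Shao_Weiss_Ye_Topological_characteristic_factors:2020}, with regional proximality for $T_d$ combined with minimality of the $\Z^k$-action to move the last coordinate freely while fixing the others, via an almost one-to-one modification.

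The main obstacle is this decoupling step. One has to show that the highest-degree coordinate can be independently adjusted within its $\Krat(X,T_d)$-class while the lower coordinates are held near prescribed targets, with no joint nilsystem constraint between different $T_i$'s. My first attempt would be to prove it on an infinite-step pro-nil model for the $\Z^k$-action, where orbits are given by polynomial sequences in a nilpotent group. There, distinct degrees give equidistribution of the tuple on products of subnilmanifolds up to rational (finite-index) obstructions, by a Leibman-type criterion. I would then lift the result to $X$ via the $\infty$-step saturation for $\Z^k$-actions from \cite{Alvarez_regionally_proximal_tfc_group_actions:2026}, at the cost of passing to almost one-to-one extensions.
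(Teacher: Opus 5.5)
Your eventual architecture (analyse the pro-nil model $X_\infty$, then lift to $X$ through almost one-to-one modifications) is the paper's. The gap is the lift itself, which carries essentially all the work and which your proposal does not supply.

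What is needed is a relative statement. Let $\pi\colon X\to Y$ be an open factor onto an almost one-to-one extension of $X_\infty$, and let $V_0,\dots,V_d$ be open with $\bigcap_i\pi(V_i)\neq\emptyset$. Then $V_0\cap T_1^{-p_1(n)}V_1\cap\dots\cap T_d^{-p_d(n)}V_d\neq\emptyset$ for some $n$. This is \cref{thm: TFC_nice_pol} applied to the family $(p_{j_1},0,\dots,0),\dots,(0,\dots,0,p_{j_d})$ ordered by decreasing degree; Ye--Yu's argument then converts it into the generic-point statement.

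The existing saturation theorems (Glasner--Huang--Shao--Weiss--Ye, Qiu, Ye--Yu) concern a single transformation. Moreover, no $\Z^k$-saturation principle that ignores the polynomial family can give this. Take $k=1$, $T_1=T_2=S_1=R$ minimal and weakly mixing, and $p_1=p_2=n$. Then $X_\infty$ is trivial, yet the orbit of $(R^nx,R^nx)$ stays on the diagonal. So distinct degrees must be used exactly in the lift, and your plan uses them only on the nil side. Your sentence that ``other nilfactor obstructions are killed by distinct degrees'' restates the theorem rather than proving a step of it.

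Your PET paragraph does not fill this gap. A van der Corput step (shift $n\mapsto n+m$, subtract one chosen tuple from all the others) does not split off $T_d^{p_d(n)}$ as a single-transformation problem plus a $(d-1)$-case. It produces mixed terms $T_1^{a_1(n)}\cdots T_d^{a_d(n)}$. The induction therefore has to run over families of polynomial $d$-tuples, with a matrix type that strictly decreases and a class of families stable under the operation.

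The paper uses Chu--Frantzikinakis--Host's nice families, strengthened to very nice ones, together with a reordering lemma (\cref{lemma: reduce_type}). Niceness is exactly how distinct degrees enter: the highest-degree coordinate dominates every difference. It also forces the terminal linear families to involve a single transformation. The base case is then linear saturation for one transitive element of the $\Z^k$-action (\cref{lemma: linear stronger}). That rests on $\RP^{[d]}(X,T)=\RP^{[d]}(X,S_1,\dots,S_k)$ and on Leibman's lemma, used to impose polynomial conditions on the factor. The induction hypothesis must also carry auxiliary polynomial constraints on $\pi(z)$ (the family $\cC$ in $\Lambda(\cP,\cC)$). Linear tuples coexisting with higher-degree ones need separate treatment as well.

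Two smaller points. Your Leibman/Weyl route on the pro-nilsystem is a reasonable alternative to the paper's use of Frantzikinakis--Kuca with $X_{\operatorname{rat}}=Z_{\operatorname{rat}}$, though it needs care with disconnected groups and inverse limits. In the Baire step, $x\mapsto[x]_{\Krat}$ is in general only upper semicontinuous. That is why the paper passes through continuity points and Veech's theorem.
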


Actually, \cref{thm: TFC_distinct_deg} is a consequence of a stronger theorem (\cref{thm: TFC_nice_pol}), which gives a saturation result for a broader class of families of polynomial tuples satisfying certain degree conditions.

\subsection{The joint transitivity problem}

A central question in topological dynamics is to understand when several orbits can be made dense simultaneously. For a single transformation, this is the usual notion of transitivity. For several transformations, it is called joint transitivity (also called $\Delta$-transitivity in \cite{Huang_Shao_Ye_top_correspondence_multiple_averages:2019}).

\begin{definition}
    Let $(X,S_1,\dots,S_k)$ be a $\Z^k$-system, $T_1,\dots,T_d\in \langle S_1,\dots,S_k\rangle$ and  $(a_1(n))_n, \dots,$ $ (a_d(n))_n$ be integer-valued sequences. We say that $(T_1^{a_1(n)})_n,\dots,(T_d^{a_d(n)})_n$ are \emph{jointly transitive} if there is a dense G$_\delta$ subset $\Omega$ of $X$ such that for all $x\in \Omega$, the set \begin{align*}
        \{(T_1^{a_1(n)}x,\dots,T_d^{a_d(n)}x):n\in \Z\}
    \end{align*}

    is dense in $X^d$. In the case $d=1$, we simply say that $(T_1^{a_1(n)})_n$ is \emph{transitive}. If, moreover, $a_1(n)=n$ for every $n\in\Z$, then we say that $T_1$ is transitive.
\end{definition}

An early result in this direction is due to Glasner \cite{Glasner_top_erg_decomposition:1994}, who showed that, in every minimal weakly mixing $\mathbb Z$-system $(X,T)$, the sequences $(T^{a_1n})_n,\dots,(T^{a_d n})_n$ are jointly transitive whenever $a_1,\dots,a_d$ are distinct nonzero integers. This was later extended to polynomial iterates for nilpotent group actions by Huang, Shao and Ye \cite{Huang_Shao_Ye_top_correspondence_multiple_averages:2019}, and to generalized polynomials for $\mathbb Z$-systems by Zhang and Zhao \cite{Zhang_Zhao_topological_mult_rec_WM_GP:2021}.

The results mentioned above require weak mixing assumptions. For general minimal $\mathbb Z^k$-systems, Donoso, Koutsogiannis and Sun \cite{Donoso_Koutsogiannis_Sun_joint_transitivity:2025} recently obtained a complete characterization of joint transitivity for linear iterates. Their result can be viewed as a topological analogue of the classical joint ergodicity criterion of Berend and Bergelson.

In \cite[Problem 1.8]{Donoso_Koutsogiannis_Sun_joint_transitivity:2025}, the authors ask for a characterization of joint transitivity for polynomial iterates. As a consequence of \cref{thm: TFC_distinct_deg}, we obtain a characterization in the case where the polynomials have distinct degrees. More precisely, we prove the following result.\begin{maintheorem}\label{thm:B}
    Let $(X,S_1,\dots,S_k)$ be a minimal $\Z^k$-system, let $T_1,\dots,T_d\in \langle S_1,\dots,S_k\rangle$ be transitive transformations, and let $p_1,\dots,p_d$ be nonconstant integer polynomials with distinct degrees. Then $(T_1^{p_1(n)})_{n},\dots,(T_d^{p_d(n)})_{n}$ are jointly transitive if and only if $(T_1^{p_1(n)}\times \dots\times T_{d}^{p_{d}(n)})_{n}$ is transitive on $X^d$.
\end{maintheorem}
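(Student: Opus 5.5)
The forward implication is immediate. If $(T_1^{p_1(n)})_n,\dots,(T_d^{p_d(n)})_n$ are jointly transitive, then for $x$ in the dense G$_\delta$ set $\Omega$ the diagonal orbit $\{(T_1^{p_1(n)}x,\dots,T_d^{p_d(n)}x)\}$ is dense in $X^d$. This orbit is contained in the orbit of $(x,\dots,x)$ under $T_1^{p_1(n)}\times\dots\times T_d^{p_d(n)}$. Given open sets $U,V\subset X^d$, we choose such an $x$ and an $n$ with $(T_i^{p_i(n)}x)_i\in U$ and an $m$ with $(T_i^{p_i(m)}x)_i\in V$. This alone does not give $U\cap (T^{-p(n)}\cdots)V$ for a single shift, so I would instead use the G$_\delta$ formulation of transitivity. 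Density of one orbit through points of a dense set shows that the product sequence has a dense orbit. The standard Baire argument then yields a dense G$_\delta$ set of transitive points in $X^d$ (using that $X^d$ has no isolated points, or handling the finite case separately). This is the definition of transitivity used here.

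For the converse I would reduce to \cref{thm: TFC_distinct_deg}. First I would remove the assumption $p_i(0)\neq 0$. Write $p_i=q_i+c_i$ with $q_i(0)=0$. Then $T_i^{p_i(n)}x = T_i^{c_i}T_i^{q_i(n)}x$, and the homeomorphism $T_1^{c_1}\times\dots\times T_d^{c_d}$ preserves density. The product transitivity hypothesis transfers as well, since conjugating by this homeomorphism commutes with the product maps. So I may assume $p_i(0)=0$. Next, take $\Omega$ from \cref{thm: TFC_distinct_deg}. For $x\in\Omega$ the orbit closure contains every tuple $(x_1,\dots,x_d)$ with each $x_j$ in the $\Krat(X,T_i)$-cell of $x$ for some $i$, in particular for $i=j$. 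The key claim is that transitivity of $T_1^{p_1(n)}\times\dots\times T_d^{p_d(n)}$ on $X^d$ forces the orbit closure to be everything.

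The plan for this claim is as follows. Project to the rational Kronecker factors $Z_j=X/\Krat(X,T_j)$. On these inverse limits of finite (periodic) rotations, the orbit closure of $(\pi_1 x,\dots,\pi_d x)$ under $(T_j^{p_j(n)})$ is a union of cosets determined by the congruence behaviour of $p_j(n)$ modulo the periods. Transitivity of the product on $X^d$ descends to transitivity of the product on $\prod Z_j$. On a compact monothetic profinite group, such a polynomial sequence is transitive exactly when $(p_1(n),\dots,p_d(n))$ hits every residue class jointly. This property does not depend on the base point. Hence for every $x$ the projected diagonal orbit is dense in $\prod_j Z_j$.

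Given a target open set $U_1\times\dots\times U_d$, I would then lift as follows. Choose $n$ so that the factor images of $T_j^{p_j(n)}x$ lie in $\pi_j(U_j)$, that is, pick $y_j\in U_j$ and use density to find an orbit point $(T_j^{p_j(n)}x)_j$ with $\pi_j$ equal to $\pi_j(y_j)$ in the limit. We then need the orbit closure to be saturated over the factor at all of its points, not just at $x$. I would obtain this by applying \cref{thm: TFC_distinct_deg} at the point $T^{m}x$ using invariance, or more simply by noting that the set of fibres over $\pi_j x$ together with the group structure lets one translate: $\Krat$-cells are mapped to $\Krat$-cells by elements of $\langle S_i\rangle$.

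The main obstacle is this last lifting step. \cref{thm: TFC_distinct_deg} only gives the full fibre above the base point. Combining closedness of the orbit closure, its invariance under $(T_j^{p_j(n)})$ shifts of the base point in $\Omega$, and density of the factor orbit should let me move the saturated fibre to arbitrary factor points. This requires care, because the shifted points $T_j^{p_j(n)}x$ differ per coordinate and need not be of the diagonal form.
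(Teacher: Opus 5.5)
\textbf{There is a genuine gap: the lifting step you flag is left open, and the mechanisms you suggest for it do not work.}

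Write $R_n=T_1^{p_1(n)}\times\dots\times T_d^{p_d(n)}$, $O_x=\overline{\{R_nx^{(d)}:n\in\Z\}}$ and $\Pi=\pi_1\times\dots\times\pi_d$.

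\emph{Why your suggestions fail.} The set $O_x$ is not $R_m$-invariant, because $R_mR_n\neq R_{m+n}$ once some $p_j$ is nonlinear. Applying \cref{thm: TFC_distinct_deg} at another point $T^mx$ gives nothing new: $O_{T^mx}=(T^m\times\dots\times T^m)O_x$, and the saturated fibre there is just the diagonal translate of the one over $x^{(d)}$. So you only ever learn about fibres over diagonal base points. What you need are fibres over the non-diagonal points $\Pi(R_nx^{(d)})$.

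\emph{The missing idea.} Shift the polynomials; the paper does this too in its final step. We have $\{R_{n+k}x^{(d)}:k\in\Z\}=R_n\{(T_1^{\partial_np_1(k)}x,\dots,T_d^{\partial_np_d(k)}x):k\in\Z\}$, and $\partial_np_1,\dots,\partial_np_d$ are again nonconstant, of distinct degrees, and vanish at $0$. Apply \cref{thm: TFC_distinct_deg} to each shifted family, intersect the countably many residual sets, and push forward by the homeomorphism $R_n$. This gives a dense G$_\delta$ set of $x$ with $O_x\supseteq\prod_j\pi_j^{-1}(T_j^{p_j(n)}\pi_jx)=\Pi^{-1}(\Pi R_nx^{(d)})$ for every $n\in\Z$.

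\emph{A second missing ingredient.} Let $D_x=\{\Pi R_nx^{(d)}:n\in\Z\}$. Your base-point-independence observation for rotations on the adding machine is correct, so $D_x$ is dense. But full fibres over $D_x$ plus closedness of $O_x$ do not suffice by themselves, since $\pi_j$ need not be open; a Toeplitz system over its odometer is an example. You need semi-openness. Factor maps between minimal systems are semi-open, hence so is $\Pi$. Inside the interior of $\Pi(U)$, choose a point at which the fibre map $y\mapsto\Pi^{-1}(y)$ is continuous; such points form a residual set. This shows $\Pi^{-1}(D_x)$ meets every nonempty open $U\subseteq X^d$, so $O_x=X^d$.

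\emph{Comparison with the paper.} With these two ingredients your route becomes a valid alternative. The paper instead passes to an O-diagram over $X_\infty$. It uses the Frantzikinakis--Kuca joint ergodicity criterion on the pro-nilfactor (\cref{lemma: nil_case}) to find one good $n$ at the base. It then lifts via the open-set saturation result applied to $\partial_np_j$. Your version trades that nil-level joint ergodicity input for an elementary rotation argument on the adding machine, at the cost of using the full $\Krat$-level saturation.

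(A minor point: in the forward direction, ``no isolated points'' is not the right justification for a sequence that is not a group action. Instead use that the maps $S_{m_1}\times\dots\times S_{m_d}$ commute with every $R_n$ and act minimally on $X^d$, so transitive points are dense.)
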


We expect that the distinct-degree assumption can be replaced by the weaker assumption of pairwise independence. More precisely, we propose the following conjecture, which may be viewed as a topological analogue of {\cite[Theorem 2.16]{Frantzikinakis_Kuca_joint_erg_comm_poly:2025}}.
\begin{conjecture}
    Let $(X,S_1,\dots,S_k)$ be a minimal $\Z^k$-system, let $T_1,\dots,T_d\in \langle S_1,\dots,S_k\rangle$ be transitive transformations, and let $p_1,\dots,p_d$ be pairwise independent integer polynomials. Then $(T_1^{p_1(n)})_{n},\dots,$ $(T_d^{p_d(n)})_{n}$ are jointly transitive if and only if $(T_1^{p_1(n)}\times \dots\times T_{d}^{p_{d}(n)})_{n}$ is transitive on $X^d$.
\end{conjecture}

The proof of \cref{thm:B} uses \cref{thm: TFC_distinct_deg}. Thus, the above conjecture would follow from a version of \cref{thm: TFC_distinct_deg} for pairwise independent polynomials.

\subsection*{Acknowledgments}

The author is grateful to Sebastián Donoso for his guidance during the preparation of this article and for his helpful comments. The author also thanks the University of Science and Technology of China for its hospitality while this work was being completed. The author was supported by ANID-Subdirección de Capital Humano/Doctorado Nacional/2025-21251865 and Centro de Modelamiento Matemático (CMM) FB210005, BASAL funds for centers of excellence from ANID-Chile.

\section{Background}

\subsection{Topological dynamics}

A \emph{$\Z^k$-system} is a tuple $(X,S_1,\dots,S_k)$, where $X$ is a compact metric space and $S_1,\dots,S_k\colon X\to X$ are commuting homeomorphisms; that is, $S_iS_j=S_jS_i$ for all $1\leq i,j \leq k$. We denote by $\langle S_1,\dots,S_k\rangle$ the group generated by $S_1,\dots,S_k$.

For any $\Z^k$-system $(X,S_1,\dots,S_k)$ and $m = (m_1,\dots,m_k)\in\Z^k$, we write $S_m = S_1^{m_1}S_2^{m_2}\cdots S_k^{m_k}$. We say that $(X,S_1,\dots,S_k)$ is \emph{minimal} if, for every $x\in X$, the set $\{S_m x: m\in \Z^k\}$ is dense in $X$. 

%Let $(R_n)_n$ be a sequence of homeomorphisms of $X$. We say that $(R_n)_n$ is transitive if there exists $x\in X$ such that $\{R_nx:n\in\mathbb{Z}\}$ is dense in $X$. In the special case where $R_n=T^n$ for every $n\in\mathbb{Z}$, for some homeomorphism $T\colon X\to X$, we just say that $T$ is transitive.

Let $(X,S_1,\dots,S_k)$ and $(Y,S_1,\dots,S_k)$ be $\mathbb{Z}^k$-systems. By a slight abuse of notation, we use the same symbols $S_1,\dots,S_k$ for the transformations acting on both $X$ and $Y$. We say that $Y$ is a \emph{factor} of $X$, or equivalently that $X$ is an \emph{extension} of $Y$, if there exists a continuous onto map $\pi\colon X\to Y$, called a \emph{factor map}, such that $\pi\circ S_i=S_i\circ \pi$ for every $1\leq i\leq k$. A factor map $\pi:X\to Y$ is \emph{almost one-to-one} if there exists a dense G$_\delta$ subset $\Omega$ of $X$ such that for any $x\in \Omega$, $\pi^{-1}(\pi(x))=\{x\}$.

\subsection{Nilmanifolds and nilsystems}

We refer to {\cite[Chapters 10 and 11]{Host_Kra_nilpotent_structures_ergodic_theory:2018}} for the material discussed in this section.

Let $G$ be a $d$-step nilpotent Lie group and $\Gamma$ a discrete cocompact subgroup of $G$. The compact manifold $X = G/\Gamma$ is called a {\em $d$-step nilmanifold}. The group $G$ acts on $X$ by left transformations, and we denote this action by $(g,x)\mapsto gx$. The Haar measure $\mu$ of $X$ is the unique probability measure on $X$ invariant under this action. Let $\tau\in G$ and $T$ be the transformation $x\mapsto \tau x$ of $X$. Then $(X,\mu,T)$ is called a {\em $d$-step nilsystem}. An inverse limit of $d$-step nilsystems is called a $d$-step {\em pro-nilsystem}.

A sequence $(g(n))_{n\in\mathbb{Z}}$ in $G$ is called {\em polynomial} if it has the form $g(n) = \tau_1^{p_1(n)}\tau_2^{p_2(n)}\dots\tau_m^{p_m(n)}$, where $\tau_1,\dots,\tau_m\in G$ and $p_1,\dots,p_m\in \Z[t]$. Given such a polynomial sequence $g$, we consider the corresponding sequence of transformations of $X$ given by $x\mapsto g(n)x$. Then the following two properties are equivalent: the sequence $(g(n))_{n}$ is transitive on $X$, and it is ergodic with respect to the Haar measure $\mu$.

\subsection{Dynamical cubes}

Let $d \in\N$ be an integer, and write $[d] = \{1, 2, \dots, d\}$. We view an element of $\{0,1\}^{d}$, the Euclidean cube, either as a sequence $\epsilon = (\epsilon_{1}, \dots, \epsilon_{d})$ of 0's and 1's; or as a subset of $[d]$. A subset $\epsilon$ corresponds to the sequence $(\epsilon_{1}, \dots, \epsilon_{d}) \in \{0,1\}^{d}$ such that $i \in \epsilon$ if and only if $\epsilon_{i} = 1$ for $i \in [d]$. If $X$ is a set, we denote $X^{2^{d}}$ by $X^{[d]}$ and we write a point $\bx \in X^{[d]}$ as $\bx = (x_{\epsilon} : \epsilon \subseteq [d])$. We can isolate the first coordinate, writing $X^{[d]}_{*} = X^{2^{d}-1}$ and writing a point $\bx\in X^{[d]}$ as $\bx = (x_{\emptyset},\bx_{*})$, where $\bx_{*} = (x_{\epsilon}: \epsilon\neq \emptyset)\in X^{[d]}_{*}$. For a point $x\in X$ we let $x^{[d]}\in X^{[d]}$ and $x^{[d]}_{*}\in X^{[d]}_{*}$ be the diagonal points all of whose coordinates are $x$.

Let $(X,S_1,\dots,S_k)$ be a $\Z^k$-system. The {\em $d$-dimensional face cube group}, denoted by $\cF^{[d]}(S_1,\dots,S_k)$, is the subgroup of homeomorphisms of $X^{[d]}$ given by \begin{align*}
    \{(S_{\sum_{i\in \epsilon} n_i}: \epsilon\subseteq[d]): n_1,\dots,n_d\in \Z^{k}\}.
\end{align*}

When there is no ambiguity, we write $\cF^{[d]}$ instead of $\cF^{[d]}(S_1,\dots,S_k)$.

The set of {\em dynamical cubes of dimension $d$} associated with $(X,S_1,\dots,S_k)$ is defined by \begin{align*}
    \bQ^{[d]}(X,S_1,\dots,S_k) = \overline{\bigcup_{x\in X} \cF^{[d]}x^{[d]}}.
\end{align*}

When there is no ambiguity, we write $\bQ^{[d]}(X)$ instead of $\bQ^{[d]}(X,S_1,\dots,S_k)$.

\subsection{The regionally proximal relations}
Let $(X, S_1,\dots,S_k)$ be a $\Z^k$-system and $d\geq 1$ be an integer. A pair $(x,y)\in X\times X$ is said to be {\em regionally proximal of order $d$} if there are sequences $(f_{i})_{i\in\N}\subseteq \cF^{[d]}$, $(x_{i})_{i\in\N},(y_{i})_{i\in\N}\subseteq X$, and a point $a_{*}\in X^{[d]}_{*}$ such that $(f_{i}x^{[d]}_{i},f_{i}y_{i}^{[d]})\to (x,a_{*},y,a_{*})$. The set of regionally proximal pairs of order $d$ is denoted by $\RP^{[d]}(X,S_1,\dots,S_k)$ and is called the {\em regionally proximal relation of order $d$}. When there is no ambiguity, we write $\RP^{[d]}(X)$ instead of $\RP^{[d]}(X,S_1,\dots,S_k)$.

The relation $\RP^{[d]}(X)$ is a closed and invariant relation. Moreover,\begin{align*}
     \cdots \subseteq \RP^{[d+1]}(X)\subseteq \RP^{[d]}(X)\subseteq \cdots \subseteq \RP^{[1]}(X). 
\end{align*}

The following theorem was proved in \cite{Host_Kra_Maass_nilstructure:2010} in the distal case and in \cite{Shao_Ye_regionally_prox_orderd:2012} for general minimal systems. An alternative proof of the first assertion is given in \cite{Alvarez_Donoso_cube_struct_univ_nil_applications:2025}.

\begin{theorem}
    Let $(X,S_1,\dots,S_k)$ be a minimal $\Z^{k}$-system and let $d\in\N$. Then\begin{enumerate}
        \item $\RP^{[d]}(X)$ is an equivalence relation.
        \item $(x,y)\in \RP^{[d]}(X)$ if and only if $(x,y^{[d+1]}_*)\in \bQ^{[d+1]}(X)$ if and only if $(x,y^{[d+1]}_*)\in \overline{\cF^{[d+1]}x^{[d+1]}}$.
    \end{enumerate} 
\end{theorem}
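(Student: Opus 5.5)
The plan follows the Shao--Ye strategy adapted to $\Z^k$-actions. The key structural fact is that orbit closures under the face group are minimal. First define, for $x\in X$, the set $\cF^{[d]}_{*}\subseteq \cF^{[d]}$ as the subgroup generated by those face transformations $S^{(j)}_m$ ($m\in\Z^k$, $1\le j\le d$) that act by $S_m$ on the coordinates $\epsilon\ni j$ and trivially elsewhere. These fix the coordinate $\emptyset$. I would prove by induction on $d$, using the enveloping semigroup and idempotents of the minimal system $(X,S_1,\dots,S_k)$, the following two statements:
\begin{enumerate}
\item[(a)] $(\overline{\cF^{[d]}x^{[d]}},\cF^{[d]})$ is a minimal system for every $x\in X$, and $\bQ^{[d]}(X)$ is minimal under the group generated by $\cF^{[d]}$ and the diagonal action.
\item[(b)] $x^{[d]}$ is a minimal point for the $\cF^{[d]}_*$-action.
\end{enumerate}
The inductive step writes $X^{[d+1]}=X^{[d]}\times X^{[d]}$. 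An element of $\cF^{[d+1]}$ is a pair (diagonal element of $\cF^{[d]}$, $\cF^{[d]}$-element twisted by $S^{(d+1)}_m$), so minimality in dimension $d+1$ reduces to minimality of $x^{[d]}$ in dimension $d$ for a product action. I expect this step to be the main obstacle. There I would imitate the argument that a product of a minimal point with a distal-type idempotent is again minimal: choose a minimal idempotent $u$ in the enveloping semigroup of $\cF^{[d]}$ with $ux^{[d]}=x^{[d]}$ and check that $u\times u$ fixes $x^{[d+1]}$.

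Next I would prove the equivalences in (2). If $(x,y_*^{[d+1]})\in\bQ^{[d+1]}(X)$, view the cube as two $d$-dimensional faces, $(x,y^{[d]}_*)$ and $y^{[d]}$. Approximating by points $f x_i^{[d+1]}$ with $f\in\cF^{[d+1]}$ then gives exactly the data $(f_ix_i^{[d]},f_iy_i^{[d]})\to(x,a_*,y,a_*)$ with $a_*=y^{[d]}_*$, so $(x,y)\in\RP^{[d]}$. Conversely, if $(x,y)\in\RP^{[d]}$, concatenating the defining sequences yields a point $(x,a_*,y,a_*)\in\bQ^{[d+1]}$. Moving it with face transformations of the last $d$ directions and using the symmetries of $\bQ^{[d+1]}$ together with minimality from (a), I would replace $a_*$ by $y^{[d]}_*$. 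This gives $(x,y^{[d+1]}_*)\in\bQ^{[d+1]}$. Finally, the point lies in $\bQ^{[d+1]}$, whose fiber over the first coordinate $x$ is contained in $\overline{\cF^{[d+1]}x^{[d+1]}}$. This uses (a): the set $\overline{\cF^{[d+1]}x^{[d+1]}}$ is minimal and every point of $\bQ^{[d+1]}$ with first coordinate $x$ is proximal-approximable from it. Hence it lies in $\overline{\cF^{[d+1]}x^{[d+1]}}$.

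For (1), reflexivity is trivial. Symmetry follows from the Euclidean symmetry of $\bQ^{[d+1]}$ exchanging the coordinates $\emptyset$ and $\{1\}$ (through the criterion $(x,y_*)\in\bQ^{[d+1]}$ read on faces). For transitivity, take $(x,y),(y,z)\in\RP^{[d]}$. By (b) and the last characterization, $(y,z^{[d+1]}_*)\in\overline{\cF^{[d+1]}_*y^{[d+1]}}$, say as a limit of $g_iy^{[d+1]}$ with $g_i\in\cF^{[d+1]}_*$. Since $(x,y^{[d+1]}_*)\in\bQ^{[d+1]}$ and $g_i$ fixes the $\emptyset$-coordinate, we get $g_i(x,y^{[d+1]}_*)=(x,(g_iy^{[d+1]})_*)\in\bQ^{[d+1]}$. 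Passing to the limit gives $(x,z^{[d+1]}_*)\in\bQ^{[d+1]}$, hence $(x,z)\in\RP^{[d]}$.
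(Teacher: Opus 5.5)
\textbf{Gaps in the minimality step and in the proof of (1)$\Rightarrow$(3).}

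The paper does not prove this statement; it quotes Host--Kra--Maass (distal case) and Shao--Ye (general minimal systems). Your overall architecture matches Shao--Ye, and several steps are fine. These are: (2)$\Rightarrow$(1) by splitting a $(d+1)$-cube into two $d$-faces; the fact that $(x,a_*,y,a_*)\in\bQ^{[d+1]}$; symmetry, which is in fact immediate from the definition by swapping $x_i$ and $y_i$; and transitivity, once (3) is known for \emph{all} pairs. Note that your $\cF^{[d]}_*$ is exactly the paper's $\cF^{[d]}$, which already fixes the $\emptyset$-coordinate, so (b) is the same statement as (a).

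The first genuine gap is the inductive step for the minimality of $\overline{\cF^{[d+1]}x^{[d+1]}}$. That $u\times u$ fixes $x^{[d+1]}$ is trivial. What you need is an idempotent lying in a \emph{minimal} left ideal of $E(X^{[d+1]},\cF^{[d+1]})$ that fixes $x^{[d+1]}$. In general $u\times u$ is not such an idempotent, because $\cF^{[d+1]}$ contains the new direction $\mathrm{id}^{[d]}\times S_m^{[d]}$, and $u\times u$ is the identity in that direction. Already for $d=1$ you get $u\times u=(\mathrm{id},v,\mathrm{id},v)$. Projecting onto the coordinate $\{2\}$ is a surjective homomorphism onto $E(X)$, and it sends minimal left ideals to minimal left ideals. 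If $u\times u$ lay in a minimal left ideal, this would put $\mathrm{id}$ in a minimal left ideal of $E(X)$, forcing $E(X)$ to be a group, i.e.\ $X$ distal. This is exactly where the distal argument stops working. Your proposal gives no replacement for this step: one needs idempotents that act minimally in all face directions at once while still fixing $x^{[d+1]}$.

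The second gap is the implication $(x,y)\in\RP^{[d]}\Rightarrow(x,y^{[d+1]}_*)\in\overline{\cF^{[d+1]}x^{[d+1]}}$.

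\begin{enumerate}
\item \emph{Replacing $a_*$ by $y^{[d]}_*$.} Applying $g\times g$ with $g\in\cF^{[d]}$ to $(x,a_*,y,a_*)$ moves both copies of $a_*$ together. So to replace $a_*$ by $y^{[d]}_*$ you would need $y^{[d]}\in\overline{\cF^{[d]}(y,a_*)}$, but you only know $(y,a_*)\in\bQ^{[d]}$. Even then you would land only in $\bQ^{[d+1]}$, not in $\overline{\cF^{[d+1]}x^{[d+1]}}$.
\item \emph{The fibre claim is false.} Your last step asserts that the whole fibre $\{\bz\in\bQ^{[d+1]}:z_\emptyset=x\}$ lies in $\overline{\cF^{[d+1]}x^{[d+1]}}$. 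This fails for non-distal systems and holds only for $x$ in a dense G$_\delta$ set, which is why \cref{lemma: Qd_x_Fdx} is stated that way. Take a Sturmian system over the rotation by $\alpha$, and let $x\neq x'$ lie over the discontinuity point $0$, with $x$ the right-limit coding. Then $S_1^m x\to x$ forces $m\alpha\to 0$ through nonnegative values, so $(x,x,x,x')\notin\overline{\cF^{[2]}x^{[2]}}$. On the other hand $(x,x,x,x')\in\bQ^{[2]}$: take base points at circle position $\delta\downarrow 0$ and $m\alpha\approx n\alpha\approx -0.9\delta$.
\item \emph{A residual-set version would not suffice.} Your transitivity argument needs (3) at arbitrary $x$ and $y$.
\end{enumerate}

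The true statement concerns only points of the special form $(x,y^{[d+1]}_*)$. For $d=1$ it is already a Veech-type theorem: regional proximality witnessed with the base point held fixed. Its proof needs an argument that exploits this special shape together with face-minimality, and your proposal does not provide one.
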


In particular, the quotient $X/\RP^{[d]}(X)$ is well defined. We usually denote this quotient by $X_d$ and call it the {\em $d$-step pro-nilfactor}. The following structure theorem was proved in \cite{Host_Kra_Maass_nilstructure:2010} for $\mathbb{Z}$-systems and in \cite{Gutman_Manners_Varju_nilspaces_III:2020} for $\mathbb{Z}^k$-systems.

\begin{theorem}
    Let $(X,S_1,\dots,S_k)$ be a minimal $\Z^{k}$-system and let $d\in\N$. Then $X$ is a $d$-step pro-nilsystem if and only if $\RP^{[d]}(X)=\Delta$.
\end{theorem}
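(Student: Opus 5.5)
The plan is to prove the two implications separately. The direction ``pro-nilsystem $\Rightarrow$ $\RP^{[d]}(X)=\Delta$'' is essentially algebraic. The converse will go through the theory of compact nilspaces, since the space of dynamical cubes of a system with trivial $\RP^{[d]}$ carries exactly the rigidity that nilspace structure theorems require.

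For the first direction, let $X=G/\Gamma$ be a minimal $d$-step nilsystem on which $S_1,\dots,S_k$ act by translations $\tau_1,\dots,\tau_k\in G$. I would identify $\bQ^{[d+1]}(X)$ with the orbit closure of the diagonal under the Host--Kra cube group $G^{[d+1]}$. This group is generated by face transformations built from the filtration $G=G_1\supseteq G_2\supseteq\cdots\supseteq G_{d+1}=\{1\}$. Using the standard fact that an element of $G^{[d+1]}$ is determined by its coordinates off $\emptyset$ modulo $G_{d+1}=\{1\}$ (the ``unique closing'' of $(d+1)$-cubes), one gets the following: if $(x,y^{[d+1]}_*)\in\bQ^{[d+1]}(X)$, then also $(y,y^{[d+1]}_*)\in\bQ^{[d+1]}(X)$, and hence $x=y$. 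By the characterization of $\RP^{[d]}$ through $\bQ^{[d+1]}$ in the theorem stated above, this gives $\RP^{[d]}(X)=\Delta$. For an inverse limit $X=\varprojlim X_n$ of such systems, I would use that factor maps send $\RP^{[d]}$ into $\RP^{[d]}$, which is immediate from the definition by pushing forward the approximating sequences. So if $x\neq y$ in $X$, they are separated in some $X_n$, where $\RP^{[d]}$ is trivial, and therefore $(x,y)\notin\RP^{[d]}(X)$.

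For the converse, assume $\RP^{[d]}(X)=\Delta$. The plan is to show that the family $\{\bQ^{[m]}(X)\}_{m\ge 0}$ makes $X$ into a compact nilspace of degree $d$, and then to invoke the Gutman--Manners--Varjú structure theorem. That theorem says a compact degree-$d$ nilspace is an inverse limit of $d$-step nilmanifolds, and its translation group acts compatibly, so the $S_i$ become nilrotations at each finite stage. Three nilspace axioms need checking:
\begin{enumerate}
\item \emph{Closure under cube morphisms.} Each $\bQ^{[m]}$ is invariant under the automorphisms of the discrete cube and under taking faces and doubling. This follows directly from the definition through $\cF^{[m]}$ and closure.
\item \emph{Corner completion.} Every $m$-corner extends to an $m$-cube. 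I would prove this by minimality, together with the fact (used in the proof that $\RP^{[d]}$ is an equivalence relation) that for a dense set of base points $x$ the fiber $\overline{\cF^{[m]}x^{[m]}}$ is the full cube set over $x$. Limits of completions then handle arbitrary corners.
\item \emph{Uniqueness of completion in dimension $d+1$.} If $(x,\bz_*),(y,\bz_*)\in\bQ^{[d+1]}(X)$, then $x=y$. This is where the hypothesis enters.
\end{enumerate}

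The main obstacle is step (iii). The characterization of $\RP^{[d]}$ only controls cubes whose starred part is diagonal, $y^{[d+1]}_*$, so one must upgrade it to arbitrary starred parts $\bz_*$. I would first try a gluing argument: concatenate the two cubes along their common face to obtain a $(d+2)$-cube, and use the closure properties of $\bQ^{[\cdot]}$ under gluing (again coming from minimality and the $\cF$-orbit description) to produce a $(d+1)$-cube of the form $(x,y^{[d+1]}_*)$. This forces $(x,y)\in\RP^{[d]}(X)=\Delta$. Once uniqueness holds, the remaining work is to verify that the $S_i$ lie in the degree-$d$ translation group, which is straightforward since they preserve all cube sets, and then to apply the structure theorem.
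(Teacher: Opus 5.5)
Your first direction is fine: dynamical cubes of a $d$-step nilsystem lie in the Host--Kra cube set of $G/\Gamma$, which has unique closing in dimension $d+1$. Factor maps push $\RP^{[d]}$ forward, and that handles inverse limits.

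\textbf{The gap: step (ii).} Corner completion for $\{\bQ^{[m]}(X)\}_m$ is not a consequence of minimality. It fails for general minimal systems, so an argument that never uses $\RP^{[d]}(X)=\Delta$ cannot establish it. Here is a counterexample.
\begin{itemize}
\item Take a Sturmian system $\pi\colon X\to\mathbb{T}$ over the rotation by $\alpha$, and a two-point fibre $\pi^{-1}(t)=\{x^+,x^-\}$. Here $x^{\pm}$ is the limit of the unique lifts of points $t\pm\lambda$ off the singular orbit, as $\lambda\downarrow 0$.
\item Approximating by points off the singular orbit, one checks the following. A configuration in $\{x^+,x^-\}^{[m]}$ lies in $\bQ^{[m]}(X)$ if and only if, reading $x^{\pm}$ as the sign $\pm$, it is the sign pattern of $(u+\sum_{j\in\epsilon}v_j)_{\epsilon\subseteq[m]}$ for some reals $u,v_1,\dots,v_m$ with all these sums nonzero.
\item Put $x^+$ at $\emptyset,\{2\},\{3\},\{1,2\}$ and $x^-$ at $\{1\},\{1,3\},\{2,3\}$. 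Each lower face is realizable: for the faces avoiding $3$, $2$, $1$ use $(u,v_1,v_2)=(1,-2,3)$, $(u,v_1,v_3)=(1,-2,\tfrac12)$, $(u,v_2,v_3)=(1,-\tfrac35,-\tfrac35)$. So this is a $3$-corner.
\item The seven signs are not jointly realizable. With $u=1$: $v_1<-1$ and $v_1+v_2>-1$ give $v_2>0$; then $v_2+v_3<-1$ gives $v_3<-1$, contradicting $v_3>-1$.
\item Any completion must lie over $t$, since cubes of the rotation close uniquely. Hence no completion exists.
\end{itemize}
The failure shows up in your mechanism. Approximating each lower face separately by $\cF^{[m-1]}x^{[m-1]}$ produces unrelated group elements for different faces, and nothing forces them to come from one element of $\cF^{[m]}$. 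The same example also rules out the gluing you invoke in (iii). List coordinates as $\emptyset,\{1\},\{2\},\{1,2\}$ and put $\mathbf a=(x^+,x^-)$, $\mathbf b=(x^+,x^+)$, $\mathbf c=(x^-,x^+)$. Then $(\mathbf a,\mathbf b)$ and $(\mathbf b,\mathbf c)$ lie in $\bQ^{[2]}(X)$, but $(\mathbf a,\mathbf c)=(x^+,x^-,x^-,x^+)$ does not.

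\textbf{Step (iii) is circular as sketched.} Two $(d+1)$-cubes sharing a $d$-face glue to a $(d+1)$-cube, not a $(d+2)$-cube. Passing from $(x,\bz_*),(y,\bz_*)\in\bQ^{[d+1]}(X)$ to $(x,y^{[d+1]}_*)\in\bQ^{[d+1]}(X)$ is, in nilspace language, the lemma that two cubes differing at one vertex have $\sim_d$-related values there. The proof of that lemma uses corner completion, which is step (ii).

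\textbf{What the cited route actually does.} The paper gives no proof of its own; for $\Z^k$ it cites the nilspace route of Gutman--Manners--Varj\'u. There, fibrancy is never checked by hand. It is deduced from three inputs:
\begin{itemize}
\item $(d+1)$-uniqueness, which must itself be derived from $\RP^{[d]}(X)=\Delta$;
\item ergodicity;
\item a gluing property that also has to be proved using the hypothesis (note that $\RP^{[d]}(X)=\Delta$ forces distality).
\end{itemize}
That deduction is the main technical content of that work. Your proposal replaces it with claims that are false in the generality stated.

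Your concluding steps are fine once a genuine degree-$d$ nilspace is in hand: the $S_i$ act as translations, and the inverse-limit structure theorem then applies.
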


In \cite{Dong_Donoso_Maass_Shao_Ye_infinite_step_nil:2013}, the authors introduced the relation $\RP^{[\infty]}(X)$, defined by $\RP^{[\infty]}(X) = \bigcap_{d\in\N} \RP^{[d]}(X)$. We usually denote the quotient $X/\RP^{[\infty]}(X)$ by $X_\infty$ and call it the {\em $\infty$-step pro-nilfactor}. They proved the following analogous structure theorem.

\begin{theorem}\label{thm: str_thm}
    Let $(X,S_1,\dots,S_k)$ be a minimal $\Z^{k}$-system. Then $X$ is a pro-nilsystem if and only if $\RP^{[\infty]}(X)=\Delta$.
\end{theorem}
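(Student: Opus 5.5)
The plan is to reduce both directions to the finite-step structure theorem stated just above and to two elementary facts: factor maps send regionally proximal pairs to regionally proximal pairs, and inverse limits separate points. Throughout, a pro-nilsystem is an inverse limit of nilsystems, with the $\Z^k$-action given by commuting translations. Each such nilsystem is $d$-step for some $d$. Since a $d$-step nilsystem is also $d'$-step for every $d'\ge d$, I may assume the steps are nondecreasing along the sequence.

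First, I show that a factor map $\pi\colon X\to Y$ satisfies $(\pi\times\pi)\RP^{[d]}(X)\subseteq \RP^{[d]}(Y)$. Let $(x,y)\in\RP^{[d]}(X)$, witnessed by $f_i\in\cF^{[d]}$, points $x_i,y_i\in X$ and $a_*\in X^{[d]}_*$ with $(f_ix_i^{[d]},f_iy_i^{[d]})\to(x,a_*,y,a_*)$. Since $\pi$ commutes with each $S_j$, the coordinatewise map $\pi^{[d]}$ intertwines the face group on $X^{[d]}$ with the face group on $Y^{[d]}$. So $(f_i(\pi x_i)^{[d]},f_i(\pi y_i)^{[d]})\to(\pi x,\pi^{[d]}_*a_*,\pi y,\pi^{[d]}_*a_*)$ by continuity of $\pi$, where $\pi^{[d]}_*$ is the coordinatewise map on $X^{[d]}_*$. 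This is exactly the definition of $(\pi x,\pi y)\in\RP^{[d]}(Y)$.

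For the forward direction, suppose $X=\varprojlim X_j$ with $X_j$ a $d_j$-step nilsystem, $d_j$ nondecreasing, and projections $\pi_j\colon X\to X_j$. If $d_j$ is eventually constant equal to $d$, then $X$ is a $d$-step pro-nilsystem and $\RP^{[\infty]}(X)\subseteq\RP^{[d]}(X)=\Delta$ by the finite-step structure theorem. Otherwise $d_j\to\infty$. Take $(x,y)\in\RP^{[\infty]}(X)$. For each $j$, $(x,y)\in\RP^{[d_j]}(X)$, so $(\pi_jx,\pi_jy)\in\RP^{[d_j]}(X_j)$ by the first step. A $d_j$-step nilsystem is in particular a $d_j$-step pro-nilsystem, and it is minimal as a factor of the minimal system $X$. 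The structure theorem therefore gives $\RP^{[d_j]}(X_j)=\Delta$, so $\pi_jx=\pi_jy$ for all $j$. Since the projections of an inverse limit separate points, $x=y$.

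For the converse, assume $\RP^{[\infty]}(X)=\Delta$. Let $X_d=X/\RP^{[d]}(X)$, which is well defined because $\RP^{[d]}(X)$ is a closed invariant equivalence relation.
\begin{itemize}
\item The inclusions $\RP^{[d+1]}(X)\subseteq\RP^{[d]}(X)$ give factor maps $X_{d+1}\to X_d$. Hence the quotient maps induce a continuous equivariant map $\Phi\colon X\to\varprojlim X_d$.
\item $\Phi$ is injective because $\bigcap_d\RP^{[d]}(X)=\Delta$.
\item $\Phi$ is surjective by compactness. Given a compatible sequence $(z_d)_d$, the fibres of $z_d$ in $X$ are nonempty, closed and nested, so their intersection is nonempty.
\end{itemize}
Thus $\Phi$ is an isomorphism. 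Each $X_d$ is a $d$-step pro-nilsystem: I expect this is standard, namely that $\RP^{[d]}(X_d)=\Delta$, which is how $X_d$ is called the $d$-step pro-nilfactor, combined with the structure theorem. So $X_d=\varprojlim_m N_{d,m}$ with $N_{d,m}$ a $d$-step nilsystem.

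The main obstacle is the bookkeeping that turns this inverse limit of inverse limits into a single sequential inverse limit of nilsystems. I would handle it by a diagonal argument using metrizability. The factors of $X$ arising as $N_{d,m}$, pulled back via $X\to X_d\to N_{d,m}$, form a countable family of closed invariant equivalence relations on $X$ whose intersection is $\Delta$. Products of finitely many of these nilsystems are nilsystems, and orbit closures in them are again nilsystems. So the joins $R_n=\bigcap_{d,m\le n}R_{d,m}$ correspond to nilsystem factors $Y_n$ of $X$: take the orbit closure of the image of $X$ in $\prod_{d,m\le n}N_{d,m}$. These factors form a decreasing chain with $\bigcap_n R_n=\Delta$. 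The same compactness argument as above then identifies $X$ with $\varprojlim Y_n$.
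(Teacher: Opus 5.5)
Your argument is correct and is essentially the standard proof from \cite{Dong_Donoso_Maass_Shao_Ye_infinite_step_nil:2013}, which the paper cites rather than reproving. One direction pushes $\RP^{[\infty]}$-pairs down to the nilsystem factors and applies the finite-step structure theorem there. The other identifies $X$ with $\varprojlim X/\RP^{[d]}(X)$ and diagonalizes, using that orbit closures in finite products of nilsystems are nilsystems.

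The one nontrivial input you defer, $\RP^{[d]}(X/\RP^{[d]}(X))=\Delta$, does not follow from the easy inclusion $(\pi\times\pi)\RP^{[d]}(X)\subseteq\RP^{[d]}(Y)$ that you proved. It needs the lifting theorem $(\pi\times\pi)\RP^{[d]}(X)=\RP^{[d]}(Y)$ for factor maps between minimal systems \cite{Shao_Ye_regionally_prox_orderd:2012,Glasner_Gutman_Ye_higher_regionallyproximal_general_groups:2018}. The paper also takes this as background when it calls $X_d$ the $d$-step pro-nilfactor, so citing it is legitimate.
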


\subsection{O-diagram}

The following is a classical theorem stating that every factor map between minimal metric systems can be lifted to an open factor by almost one-to-one modifications.

\begin{theorem}[See {\cite[Chapter VI]{deVries_elements_topological_dynamics:1993}}]\label{thm: AG_diagram}
    Given a factor $\pi\colon (X, S_1,\dots,S_k) \to (Y, S_1,\dots,S_k)$ between minimal $\Z^k$-systems, there exists a commutative diagram of factors (called an O-diagram)
    \[\begin{tikzcd}
	X && {X^*} \\
	\\
	{Y} && {Y^*}
	\arrow["\pi"', from=1-1, to=3-1]
	\arrow["{\theta^*}"', from=1-3, to=1-1]
	\arrow["{\pi^*}", from=1-3, to=3-3]
	\arrow["\theta", from=3-3, to=3-1]
\end{tikzcd}\]

    such that $\theta$ and $\theta^*$ are almost one-to-one factors and $\pi^*$ is an open factor.
\end{theorem}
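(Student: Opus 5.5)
The proof would follow the classical hyperspace construction. Write $2^X$ for the space of nonempty closed subsets of $X$ with the Hausdorff metric, on which $S_1,\dots,S_k$ act by $A\mapsto S_iA$. The map $\pi^{-1}\colon Y\to 2^X$, $y\mapsto \pi^{-1}(y)$, is upper semicontinuous. By Fort's theorem on semicontinuous set-valued maps, its set of continuity points $Y_0\subseteq Y$ is a dense G$_\delta$ set. Since $\pi$ commutes with the $S_i$, the set $Y_0$ is also invariant.

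\textbf{Construction.} Define
\begin{align*}
Y^*=\overline{\{\pi^{-1}(y): y\in Y_0\}}\subseteq 2^X,\qquad \theta(A)=\pi(A).
\end{align*}
If $A=\lim \pi^{-1}(y_n)$, then upper semicontinuity gives $A\subseteq \pi^{-1}(y)$ for any limit point $y$ of $(y_n)$. Hence $\pi(A)$ is a single point, and $\theta\colon Y^*\to Y$ is a well-defined continuous equivariant map. It is onto because its image is closed and contains $Y_0$.

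Next define
\begin{align*}
X^*=\{(x,A)\in X\times Y^*: x\in A\},\qquad \theta^*(x,A)=x,\qquad \pi^*(x,A)=A.
\end{align*}
The set $X^*$ is closed and invariant, and the diagram commutes because $\pi(x)=\theta(A)$ whenever $x\in A$.

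\textbf{Almost one-to-one.} Let $y\in Y_0$ and suppose $\theta(A)=y$, say $A=\lim \pi^{-1}(y_n)$ with $y_n\in Y_0$. Then $y_n\to y$, and continuity of $\pi^{-1}$ at $y$ forces $A=\pi^{-1}(y)$. So $\theta$ is injective over the dense G$_\delta$ set $Y_0$. Similarly, if $\pi(x)\in Y_0$, then any $(x,A)\in X^*$ has $\theta(A)=\pi(x)$, so $A=\pi^{-1}(\pi x)$ and $(\theta^*)^{-1}(x)$ is a singleton. The set $\pi^{-1}(Y_0)$ is a G$_\delta$ set in $X$. It is dense because a factor map between minimal systems is semi-open, so preimages of dense G$_\delta$ sets are dense. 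Thus $\theta$ and $\theta^*$ are almost one-to-one, once we know they are factor maps between minimal systems.

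\textbf{Minimality.} Let $D=\{(x,\pi^{-1}(\pi x)): \pi x\in Y_0\}$. This set is dense in $X^*$: given $(x,A)$ with $A=\lim \pi^{-1}(y_n)$, Hausdorff convergence yields $x_n\in\pi^{-1}(y_n)$ with $x_n\to x$, so $(x_n,\pi^{-1}(y_n))\to(x,A)$. Now let $M\subseteq X^*$ be closed and invariant. Then $\theta^*(M)=X$ by minimality of $X$, and points of $X$ with singleton fibers have their unique preimage in $M$. Hence $D\subseteq M$, so $M=X^*$, and $X^*$ is minimal. The same argument shows $Y^*$ is minimal, and in particular $\theta^*$ is onto.

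\textbf{Openness of $\pi^*$.} The fiber map is $A\mapsto (\pi^*)^{-1}(A)=A\times\{A\}$, which is continuous from $Y^*$ to $2^{X^*}$, indeed an isometric-type embedding for the Hausdorff metric. A factor map whose fiber map is continuous in the Hausdorff topology is open. To see this, suppose $A_n\to A$ and $(x,A)\in X^*$. Continuity gives points $(x_n,A_n)\to(x,A)$, so every neighborhood of $(x,A)$ maps onto a neighborhood of $A$.

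I expect the main care to be needed in the minimality of $X^*$, which rests on the density of $D$ and the uniqueness of preimages above it. The remaining steps are routine semicontinuity arguments.
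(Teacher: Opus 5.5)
Your proposal is correct. It is the standard Veech/Auslander--Glasner hyperspace construction: take $Y^*$ to be the closure in $2^X$ of the fibres $\pi^{-1}(y)$ over the continuity points $Y_0$ of $y\mapsto\pi^{-1}(y)$, and set $X^*=\{(x,A): x\in A\in Y^*\}$. This is the metric-case argument behind the result that the paper only cites from de Vries without proof, and all your steps check out, including the density of $D$ for minimality and openness of $\pi^*$ via continuity of $A\mapsto A\times\{A\}$ (density of $\pi^{-1}(Y_0)$ also follows directly from its being a nonempty invariant subset of the minimal system $X$).
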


\subsection{Families of $d$-tuples}

Let $d,s\in\N$. Given $d$ ordered families of polynomials\begin{align*}
    \cP_1=(p_{1,1},\ldots,p_{1,s}) ,\ldots, \cP_d=(p_{d,1},\ldots,p_{d,s})
\end{align*}

we define an \emph{ordered family of $s$ polynomial $d$-tuples} as follows:\begin{align*}
(\cP_1,\ldots,\cP_d)=\big((p_{1,1},\ldots,p_{d,1}),\ldots,(p_{1,s},\ldots,p_{d,s})\big).
\end{align*}

If $\mathbf{p}=(p_1,\ldots,p_d)$ is a polynomial $d$-tuple, we define its \emph{degree} by $\deg(\mathbf{p}) = \max_{1\leq i\leq d}\deg(p_i)$.  Similarly, the \emph{degree of the family} $\cP=(\cP_1,\ldots,\cP_d)$ is defined by $\deg(\cP)=\max_{\mathbf{p}\in \cP} \deg(\mathbf{p})$.

We remove every $d$-tuple that consists entirely of constant polynomials, as well as all repeated $d$-tuples. Moreover, we shall always assume that $p(0)=0$ for every polynomial $p$ belonging to one of the families $\cP_1,\ldots,\cP_d$.

In \cite{Chu_Frantzikinakis_Host_ergodic_averages_distinct_degree:2011}, the authors introduced a class of families of polynomial $d$-tuples to study characteristic factors for multiple ergodic averages along polynomials of distinct degrees. For the topological counterpart, we introduce a more restrictive class, which is a subclass of the one defined in \cite{Chu_Frantzikinakis_Host_ergodic_averages_distinct_degree:2011} and will be used throughout the sequel.

\begin{definition}

Let 

\begin{align*}
    \cP=
\big(
(p_{1,1},\ldots,p_{d,1}),
\ldots,
(p_{1,s},\ldots,p_{d,s})
\big)
\end{align*}

be an ordered family of polynomial $d$-tuples. We say that $\cP$ is {\em nice} if
\begin{enumerate}
\item
$\deg(p_{1,1})\geq \deg(p_{1,j})$ for $j=1,\ldots,s$ ;
\medskip
\item
$\deg(p_{1,1})>\deg(p_{i,j})$ for $i=2,\ldots,d$, $j=1,\ldots,s$ ;
\medskip
\item
$\deg(p_{1,1}-p_{1,j})>\deg(p_{i,1}-p_{i,j})$ for $i=2,\ldots,d$,
$j=2,\ldots,s$.
\end{enumerate} 

 For $1\leq r\leq s$, write \begin{align*}
    \cP^{[r]}=\big((p_{1,r},\ldots,p_{d,r}),\ldots,(p_{1,s},\ldots,p_{d,s})\big).
\end{align*}

We define the notion of a {\em very nice family} recursively on $d$. If $d=1$, we say that $\cP$ is very nice if $\cP^{[r]}$ is nice for every $1\leq r\leq s$. Suppose now that $d\geq2$. We say that $\cP$ is very nice if $\cP$ is nice and one of the following conditions holds:\begin{enumerate}
    \item If $p_{1,j}\neq0$ for every $1\leq j\leq s$, then $\cP^{[r]}$ is nice for every $2\leq r\leq s$.
    \item If $p_{1,j}=0$ for some $1\leq j\leq s$, let $j_0$ be the smallest integer such that $p_{1,j_0}=0$. Then $p_{1,k}=0$ for every $j_0\leq k\leq s$, $p_{2,j_0}\neq 0$, the family $\mathcal P^{[r]}$ is nice for every $2\leq r<j_0$, and the family
\begin{align*}
    \big(
(p_{2,j_0},\ldots,p_{d,j_0}),
\ldots,
(p_{2,s},\ldots,p_{d,s})
\big)
\end{align*}

is very nice.
\end{enumerate}

\end{definition}

The following examples illustrate the recursive condition in the definition and the distinction between nice and very nice families.

\begin{example}
    The family \begin{align*}
    \big( (n^3,n^2,n),(0,n^2,n),(0,n^2+n,n) \big)
\end{align*}

is very nice. Indeed, the first zero polynomial in the first coordinate occurs at the second tuple. After removing the first coordinate and restricting to the corresponding reduced family, we obtain
\begin{align*}
    \big(
        (n^2,n),
        (n^2+n,n)
    \big),
\end{align*}
which is a very nice family of polynomial $2$-tuples. Thus, the recursive condition in $(2)$ of the definition is satisfied.
\end{example}

\begin{example}
    The family\begin{align*}
        \big( (n^4,0,0),(0,n,n^2),(0,n^2,0) \big)
    \end{align*}

    is a nice family, but it is not very nice. Indeed, the first zero polynomial in the first coordinate occurs at the second tuple. After removing the first coordinate and restricting to the corresponding reduced family, we obtain\begin{align*}
        \big( (n,n^2),(n^2,0)\big),
    \end{align*}

    which is not a nice family of polynomial $2$-tuples. Consequently, the recursive condition in $(2)$ of the definition fails, and hence the family is not very nice.
\end{example}

\section{Topological characteristic factors for a transitive transformation}

In this section, we prove a refinement of {\cite[Theorem 4.2]{Glasner_Huang_Shao_Weiss_Ye_Topological_characteristic_factors:2020}} for transitive transformations in minimal $\mathbb{Z}^k$-systems.

Using the same proof as in {\cite[Theorem 4.16]{Glasner_Gutman_Ye_higher_regionallyproximal_general_groups:2018}}, we obtain the following lemma.

\begin{lemma}\label{lemma: Qd_x_Fdx}
    Let $(X,S_1,\dots,S_k)$ be a minimal $\Z^k$-system, let $T\in \langle S_1,\dots,S_k \rangle$ be a transitive transformation and let $d\in \N$. Then there exists a dense G$_\delta$ $\langle S_1,\dots,S_k\rangle$-invariant subset $\Omega$ of $X$ such that\begin{align*}
        \{\bx\in \bQ^{[d+1]}(X,T): \bx_\emptyset = x\} = \overline{\cF^{[d+1]}(T)x^{[d+1]}}
    \end{align*}

    for each $x\in \Omega$.
\end{lemma}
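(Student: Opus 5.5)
The approach is the standard semicontinuity argument of Glasner--Gutman--Ye, applied to the system $(X,T)$ viewed inside the minimal $\Z^k$-system. Consider the set-valued map
\[
\Phi\colon X\to 2^{X^{[d+1]}},\qquad \Phi(x)=\overline{\cF^{[d+1]}(T)x^{[d+1]}}.
\]
Write $\bQ=\bQ^{[d+1]}(X,T)$ and $\Psi(x)=\{\bx\in \bQ:\bx_\emptyset=x\}$. Since $\bQ$ is the closure of $\bigcup_x \cF^{[d+1]}(T)x^{[d+1]}$ and the face group fixes the $\emptyset$ coordinate, we have $\Phi(x)\subseteq\Psi(x)$ for every $x$.

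The first step is to show that $\Phi$ is lower semicontinuous. Given $\bx=f x^{[d+1]}$ with $f\in\cF^{[d+1]}(T)$ and $x'\to x$, we have $f x'^{[d+1]}\to \bx$ by continuity of $f$. Taking closures, every point of $\Phi(x)$ is a limit of points of $\Phi(x')$. By the classical theorem on semicontinuous maps into the hyperspace of a compact metric space (Fort's theorem), the set $\Omega_0$ of continuity points of $\Phi$ is a dense $G_\delta$.

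The second step is to show $\Psi(x)\subseteq\Phi(x)$ at continuity points $x$. Take $\bx\in\Psi(x)$. Then $\bx=\lim f_i x_i^{[d+1]}$ with $x_i\to x$, since the $\emptyset$-coordinate of $f_i x_i^{[d+1]}$ is $x_i$. Hence $f_i x_i^{[d+1]}\in\Phi(x_i)$, and $\Phi(x_i)\to\Phi(x)$ in the Hausdorff metric by continuity. It follows that $\bx\in\Phi(x)$. So equality holds on $\Omega_0$.

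The third step, which I expect to be the main obstacle, is to get $\langle S_1,\dots,S_k\rangle$-invariance. Every $S\in\langle S_1,\dots,S_k\rangle$ commutes with $T$. Hence $S^{[d+1]}$ commutes with $\cF^{[d+1]}(T)$ and maps $\bQ$ to itself (and so does $S^{-1}$). Thus $\Phi(Sx)=S^{[d+1]}\Phi(x)$ and $\Psi(Sx)=S^{[d+1]}\Psi(x)$. Consequently the set $E=\{x:\Phi(x)=\Psi(x)\}$ is $\langle S_1,\dots,S_k\rangle$-invariant and contains $\Omega_0$. To get a $G_\delta$, I will instead take
\[
\Omega=\bigcap_{S\in\langle S_1,\dots,S_k\rangle} S\,\Omega_0.
\]
This is a countable intersection of dense $G_\delta$ sets (each $S$ is a homeomorphism), so it is a dense $G_\delta$ by Baire. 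It is invariant, and it is contained in $\Omega_0$, where the equality holds.

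Note that neither minimality nor transitivity of $T$ is actually needed for this identity. Those hypotheses matter only for later uses of the lemma, such as relating these cubes to $\RP$-type relations. I will remark on this, but otherwise the proof is the three steps above.
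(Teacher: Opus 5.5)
Your argument is correct and is essentially the Glasner--Gutman--Ye semicontinuity argument that the paper invokes: lower semicontinuity of $x\mapsto\overline{\cF^{[d+1]}(T)x^{[d+1]}}$, equality with the fibre of $\bQ^{[d+1]}(X,T)$ over $x$ at continuity points, and invariance coming from $ST=TS$. You are also right that minimality and transitivity of $T$ play no role in this particular identity, and a minor simplification is available: since $\Phi(Sx)=S^{[d+1]}\Phi(x)$ and $S^{[d+1]}$ induces a homeomorphism of the hyperspace, the continuity set $\Omega_0$ is already $\langle S_1,\dots,S_k\rangle$-invariant, so your intersection over translates is harmless but unnecessary.
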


\begin{lemma}[{\cite[Theorem 4.4]{Shao_Xu_saturation_R_flows:2025}}]\label{lemma: cube_G_T}
    Let $(X,S_1,\dots,S_k)$ be a minimal $\Z^k$-system, let $T\in \langle S_1,\dots,S_k \rangle$ be a transitive transformation and let $d\in \N$. Then $\bQ^{[d]}(X,S_1,\dots,S_k)=\bQ^{[d]}(X,T)$.
\end{lemma}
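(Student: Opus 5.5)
The inclusion $\bQ^{[d]}(X,T)\subseteq \bQ^{[d]}(X,S_1,\dots,S_k)$ is immediate: since $T=S_m$ for some $m\in\Z^k$, every element of $\cF^{[d]}(T)$ belongs to $\cF^{[d]}(S_1,\dots,S_k)$. The plan for the reverse inclusion is to show that the closed set $\bQ^{[d]}(X,T)$ is invariant under every generator of $\cF^{[d]}(S_1,\dots,S_k)$. Since $\bQ^{[d]}(X,T)$ contains every diagonal point $x^{[d]}$, this invariance gives $\cF^{[d]}(S_1,\dots,S_k)x^{[d]}\subseteq \bQ^{[d]}(X,T)$ for all $x$, and taking closures gives the claim.

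\emph{Generators.} The group $\cF^{[d]}(S_1,\dots,S_k)$ is generated by the face transformations $[S_m]_j$, for $m\in\Z^k$ and $1\le j\le d$. Here $[S_m]_j$ acts by $S_m$ on the coordinates $\epsilon$ with $j\in\epsilon$ and by the identity elsewhere.

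\emph{Reduction to transitive points.} Let $W$ be the set of points with dense $T$-orbit. It is nonempty by hypothesis. Since $T$ commutes with each $S_i$, the set $W$ is $\langle S_1,\dots,S_k\rangle$-invariant, so by minimality it is dense. Fix $g\in\cF^{[d]}(T)$ and $x\in X$, and choose $w_i\in W$ with $w_i\to x$. Then $g w_i^{[d]}\to g x^{[d]}$, because $g$ is a fixed homeomorphism of $X^{[d]}$. Hence
\begin{align*}
\bQ^{[d]}(X,T)=\overline{\bigcup_{w\in W}\cF^{[d]}(T)w^{[d]}}.
\end{align*}
Each $[S_m]_j$ is continuous, so it suffices to show that $[S_m]_j\, g\, w^{[d]}\in \overline{\cF^{[d]}(T)w^{[d]}}$ for every $w\in W$ and $g\in \cF^{[d]}(T)$.

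\emph{Approximating the face map.} Write $g=(T^{a_\epsilon}:\epsilon\subseteq[d])$. Because $w$ has dense $T$-orbit, there are integers $n_i$ with $T^{n_i}w\to S_m w$. The point $[S_m]_j g w^{[d]}$ has coordinate $T^{a_\epsilon}S_m w$ if $j\in\epsilon$ and $T^{a_\epsilon}w$ otherwise. Compare it with $[T^{n_i}]_j\, g\, w^{[d]}$, which lies in $\cF^{[d]}(T)w^{[d]}$. The two points agree on the coordinates with $j\notin\epsilon$. On the coordinates with $j\in\epsilon$, the second point has $T^{a_\epsilon}T^{n_i}w$, and this tends to $T^{a_\epsilon}S_m w$ by continuity of the finitely many homeomorphisms $T^{a_\epsilon}$. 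Hence $[T^{n_i}]_j\, g\, w^{[d]}\to [S_m]_j\, g\, w^{[d]}$, which gives the required invariance.

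The only delicate point is that the approximation $T^{n_i}w\approx S_m w$ must be available at the base point of the cube. This is why we first reduce to cubes based at transitive points. Once that reduction is made, the rest is continuity.
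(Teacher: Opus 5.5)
Your proof is correct. It differs from the paper only in that the paper gives no argument: it imports the lemma from \cite[Theorem 4.4]{Shao_Xu_saturation_R_flows:2025}, just as it imports the companion statement on $\RP^{[d]}$. Your proof is therefore a short, self-contained replacement for that citation.

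Its one real idea is the one you isolate. For a $T$-transitive base point $w$, choose $n_i$ with $T^{n_i}w\to S_m w$. Because $T$ commutes with $S_m$, this single sequence approximates $S_m$ at once on every vertex $T^{a_\epsilon}w$ of the face $\{\epsilon: j\in\epsilon\}$. Hence $[T^{n_i}]_j\, g\, w^{[d]}\to [S_m]_j\, g\, w^{[d]}$. The reduction to transitive base points and the passage from generators to all of $\cF^{[d]}(S_1,\dots,S_k)$ are both handled correctly.

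Your argument actually proves a little more than the lemma. Together with continuity of $[S_m]_j$, it gives $\overline{\cF^{[d]}(T)w^{[d]}}=\overline{\cF^{[d]}(S_1,\dots,S_k)w^{[d]}}$ for every $T$-transitive point $w$. It also never needs minimality of the $\Z^k$-action. The $T$-orbit of a transitive point consists of transitive points and is already dense, so your appeal to minimality for the density of $W$ can be dropped.

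The citation gives the paper brevity and keeps it uniform with the other facts taken from Shao--Xu. Your route shows that this equality of cube sets is elementary and rests only on transitivity of $T$ and commutativity.
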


A consequence of {\cite[Lemma 4.1 and Theorem 4.3]{Shao_Xu_saturation_R_flows:2025}} is the following lemma.\begin{lemma}\label{lemma: RPd_G_T}
    Let $(X,S_{1},\dots,S_{k})$ be a minimal $\Z^{k}$-system, let $T\in \langle S_{1},\dots,S_{k}\rangle$ be a transitive transformation and let $d,n\in \N$. Then \begin{align*}
        \RP^{[d]}(X,S_1,\dots,S_k) = \RP^{[d]}(X,T) = \RP^{[d]}(X,T^n).
    \end{align*}
\end{lemma}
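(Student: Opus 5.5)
The plan is to obtain the lemma by comparing cubes, using Lemma~\ref{lemma: cube_G_T} and the cube characterization of $\RP^{[d]}$ from the theorem of Host--Kra--Maass and Shao--Ye. Minimality is needed for that characterization, so we must first check which systems are minimal. The $\Z^k$-system $(X,S_1,\dots,S_k)$ is minimal by hypothesis. For the single transformation $T$, we only know it is transitive, so the equality $\RP^{[d]}(X,T)$ has to be read through cubes: $(x,y)\in\RP^{[d]}(X,T)$ iff $(x,y^{[d+1]}_*)\in \bQ^{[d+1]}(X,T)$.

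\textbf{First equality.} If we are allowed to use that cube description for $T$, it is immediate. Lemma~\ref{lemma: cube_G_T} gives $\bQ^{[d+1]}(X,S_1,\dots,S_k)=\bQ^{[d+1]}(X,T)$. Hence $(x,y^{[d+1]}_*)$ lies in one cube set iff it lies in the other, and the characterization (valid for the minimal $\Z^k$-action) gives
\[
\RP^{[d]}(X,S_1,\dots,S_k)=\RP^{[d]}(X,T).
\]
If instead $\RP^{[d]}(X,T)$ is defined directly through face groups, we argue by inclusions.
\begin{itemize}
\item $\RP^{[d]}(X,T)\subseteq \RP^{[d]}(X,S_1,\dots,S_k)$ holds trivially, since $\cF^{[d]}(T)\subseteq \cF^{[d]}(S_1,\dots,S_k)$.
\item For the reverse inclusion, take $(x,y)\in\RP^{[d]}(X,S_1,\dots,S_k)$. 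Then $(x,y^{[d+1]}_*)\in\bQ^{[d+1]}(X,S)=\bQ^{[d+1]}(X,T)$.
\item Lemma~\ref{lemma: Qd_x_Fdx} shows that on a dense $G_\delta$ invariant set of base points $x$, this cube lies in $\overline{\cF^{[d+1]}(T)x^{[d+1]}}$. From that one extracts the regionally proximal witnesses for $T$ in the standard way, by splitting the $(d+1)$-cube into two $d$-faces.
\item The remaining base points are handled by closedness of both relations.
\end{itemize}

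\textbf{Second equality.} The main obstacle is passing from $T$ to $T^n$, since $T^n$ need not be transitive. The plan is to note that $\cF^{[d]}(T^n)\subseteq\cF^{[d]}(T)$, which gives $\RP^{[d]}(X,T^n)\subseteq\RP^{[d]}(X,T)$ directly. For the reverse inclusion, we work with the $\Z^k$-action.
\begin{itemize}
\item The subgroup $\langle S_1,\dots,S_k\rangle$ acts minimally, and the subgroup generated by $T^n$ together with a finite-index sublattice is of finite index.
\item The standard fact (as in Shao--Xu, Lemma 4.1) is that for a finite-index subgroup $H$ of the acting group, $\RP^{[d]}$ computed for $H$ on each $H$-minimal component equals the restriction of the $G$-relation.
\item $X$ decomposes into finitely many $H$-minimal clopen pieces permuted by $G$. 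Each pair in $\RP^{[d]}(X,G)$ can be approximated by cube configurations whose face parameters are forced into $H$ by taking multiples: replace $n_i$ by $n\,n_i$ using the structure of the pro-nilfactor, in which rotation by a multiple is still controlled.
\end{itemize}

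To make this rigorous, I would work in the maximal $d$-step pro-nilfactor $X_d$ of the $\Z^k$-action, using that $\RP^{[d]}$ is the fibre relation of $X\to X_d$. In a nilsystem $G/\Gamma$, the cube closure generated by $\tau^n$ is a finite-index piece of the one generated by $\tau$. Since $T$ is transitive, the $T$-orbit closure is connected-component-transitive, and taking $\tau^n$ only passes to a sub-nilmanifold cover. Therefore pairs with the same image in $X_d$ remain $\RP^{[d]}(T^n)$-related, which gives the last equality.
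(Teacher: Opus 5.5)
The genuine gap is the inclusion $\RP^{[d]}(X,T)\subseteq\RP^{[d]}(X,T^n)$, which carries all the content of the second equality.

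\emph{The finite-index reduction does not reach $T^n$.} For $k=1$ you may take $H=\langle T^n\rangle$, and the finite-index fact you quote suffices. For $k\geq 2$, however, every finite-index $H\le\Z^k$ has rank $k$. Its face group is much larger than $\cF^{[d]}(T^n)$, so that fact only identifies $\RP^{[d]}(X_j,H)$ on the $H$-minimal pieces $X_j$. To get back to the single map $T^n$ you would have to redo the first equality on each $X_j$ with $T^n$ in place of $T$. Then \cref{lemma: cube_G_T} requires $T^n$ to be transitive on $X_j$, and you never address this. It is a real statement: $T^n$ need not be transitive on $X$, and cubes change without transitivity. For $X=\Z/2\Z$ and $T=+1$, one has $\bQ^{[1]}(X,T)=X^2$ but $\bQ^{[1]}(X,T^2)=\Delta$.

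\emph{The argument through $X_d$ is a non sequitur.} You must produce $f_i\in\cF^{[d]}(T^n)$ and points $x_i,y_i\in X$ witnessing that two points in a fibre of $X\to X_d$ are related. Statements about $\tau^n$-orbit closures, sub-nilmanifold covers, or replacing $n_i$ by $n\,n_i$ inside the nilfactor (where $\RP^{[d]}$ is trivial) carry no information about these fibres. Moreover, since $T^n$ need not be minimal, $\RP^{[d]}(X,T^n)$ is not known a priori to be an equivalence relation or the fibre relation of a factor. So you cannot argue with a ``$d$-step factor of $T^n$''.

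The paper does not argue this step itself; it deduces the whole lemma from \cite[Lemma 4.1 and Theorem 4.3]{Shao_Xu_saturation_R_flows:2025}. A self-contained completion of your route would go as follows. Take $H=\langle S_1^n,\dots,S_k^n\rangle$ and prove that $T^n$ is transitive on each $H$-minimal clopen piece $X_j$, e.g.\ by showing that the $T^n$-orbit closure of a $T$-transitive point of $X_j$ is $H$-invariant. Then apply the first equality to the minimal system $(X_j,S_1^n,\dots,S_k^n)$ and $T^n$. Finally, combine this with the comparison $\RP^{[d]}(X_j,S_1^n,\dots,S_k^n)=\RP^{[d]}(X,S_1,\dots,S_k)\cap(X_j\times X_j)$ that you quote.

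For the first equality, drop the detour through \cref{lemma: Qd_x_Fdx}. The step ``remaining base points are handled by closedness'' is not valid. If $X\to X_d$ is almost one-to-one but not injective, as for a Sturmian shift, one may shrink $\Omega$ to points with singleton fibres; then no off-diagonal pair is a limit of related pairs with first coordinate in $\Omega$. The step is also unnecessary, because $(x,y^{[d+1]}_*)\in\bQ^{[d+1]}(X,T)$ implies $(x,y)\in\RP^{[d]}(X,T)$ in any system. Write $(x,y^{[d+1]}_*)=\lim_i f_iz_i^{[d+1]}$ with $f_i=(g_i,T^{m_i}g_i)$, $g_i\in\cF^{[d]}(T)$ and arbitrary $z_i\in X$. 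The two $d$-faces then give exactly the witnesses $x_i=z_i$, $y_i=T^{m_i}z_i$ required by the definition. Together with the Host--Kra--Maass/Shao--Ye characterization for the minimal $\Z^k$-action and \cref{lemma: cube_G_T}, this gives $\RP^{[d]}(X,S_1,\dots,S_k)=\RP^{[d]}(X,T)$ immediately.
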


Recall that a collection $\cF$ of subsets of $\Z$ is a {\em family} if it is hereditarily upward, i.e., $F_1\subset F_2$ and $F_1\in \cF$ imply that $F_2\in \cF$. For a family $\cF$, its {\em dual} is the family $\cF^{*} =\{ F\subseteq \Z: F\cap F'\neq \emptyset \text{ for all }F'\in \cF\}$. The collection of all sets containing finite IP-sets of arbitrarily large length is denoted by $\cF_{fip}$.

The following lemma gives a characterization of certain pairs in $\RP^{[\infty]}$ for a transitive transformation. It is similar to {\cite[Theorem 7.2.7]{Huang_Shao_Ye_nilbohr_automorphy:2016}}, but its proof is different from the proof given there.

\begin{lemma}
    Let $(X,S_1,\dots,S_k)$ be a minimal $\Z^k$-system and let $T\in \langle S_1,\dots,S_k \rangle$ be a transitive transformation. Then there exists a dense G$_\delta$ $\langle S_1,\dots,S_k\rangle$-invariant subset $\Omega$ of $X$ such that  each $x\in \Omega$ satisfies the following: if $(x,y)\in \RP^{[\infty]}(X,T)$, then for any neighborhood $U$ of $y$, for any $d\in \N$, for any minimal system $(Y,T)$ and any nonempty open subset $V$ of $Y$, there exists\begin{align*}
        n\in \{k\in \Z: T^{k}x\in U\}
    \end{align*} such that\begin{align*}
        V \cap T^{-n}V\cap \dots\cap T^{-dn} V \neq \emptyset.
    \end{align*}  
\end{lemma}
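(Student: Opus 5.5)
The plan is to show two things and combine them. First, for $x$ in a suitable residual set, $N(x,U):=\{k\in\Z: T^kx\in U\}$ contains finite IP-sets of every length. Second, the set of multiple recurrence times $\{n: V\cap T^{-n}V\cap\dots\cap T^{-dn}V\neq\emptyset\}$ meets every finite IP-set of sufficiently large length $L=L(d,V)$. This second fact is a quantitative form of "$\in\cF_{fip}^*$", and we obtain it from the Hales--Jewett theorem.

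\emph{Step 1 (choice of $\Omega$ and IP-sets in $N(x,U)$).} For each $D\in\N$, let $\Omega_D$ be the dense G$_\delta$ $\langle S_1,\dots,S_k\rangle$-invariant set given by \cref{lemma: Qd_x_Fdx} (with $d=D$, for the transformation $T$). Put $\Omega=\bigcap_D\Omega_D$; by Baire it is again dense G$_\delta$ and invariant. Let $x\in\Omega$ and $(x,y)\in\RP^{[\infty]}(X,T)$. Then $(x,y)\in\RP^{[D]}(X,T)$ for every $D$. By the cube characterization of $\RP^{[D]}$ (applied to the minimal $\Z^k$-system, using \cref{lemma: RPd_G_T} and \cref{lemma: cube_G_T} to pass between $\langle S_1,\dots,S_k\rangle$ and $T$), we get $(x,y^{[D+1]}_*)\in\bQ^{[D+1]}(X,T)$. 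Since $x\in\Omega_D$, this gives $(x,y^{[D+1]}_*)\in\overline{\cF^{[D+1]}(T)x^{[D+1]}}$. Hence there are $n_1,\dots,n_{D+1}\in\Z$ with $T^{\sum_{i\in\epsilon}n_i}x\in U$ for every nonempty $\epsilon\subseteq[D+1]$. In other words, $N(x,U)$ contains the finite IP-set $FS(n_1,\dots,n_{D+1})$, and $D$ is arbitrary.

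\emph{Step 2 (finitary IP multiple recurrence).} Let $(Y,T)$ be minimal and $V\subseteq Y$ nonempty open. By minimality, finitely many translates $W_l=T^{m_l}V$, $l=1,\dots,r$, cover $Y$. Fix $y_0\in Y$ and let $L=HJ(d+1,r)$ be the Hales--Jewett number for alphabet $\{0,1,\dots,d\}$ and $r$ colours. Given $n_1,\dots,n_L$, colour a word $w\in\{0,\dots,d\}^L$ by some $l$ with
\[
T^{\sum_i w_in_i}y_0\in W_l .
\]
A monochromatic combinatorial line with wildcard set $\emptyset\neq\epsilon\subseteq[L]$ yields $a\in\Z$ and $n=\sum_{i\in\epsilon}n_i$ with $T^{a+jn}y_0\in W_l$ for $j=0,\dots,d$. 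Thus $T^ay_0\in W_l\cap T^{-n}W_l\cap\dots\cap T^{-dn}W_l$, and applying $T^{-m_l}$ shows that $V\cap T^{-n}V\cap\dots\cap T^{-dn}V\neq\emptyset$ with $n\in FS(n_1,\dots,n_L)$.

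\emph{Conclusion.} Take $D+1=L$ in Step 1. The resulting $n_1,\dots,n_L$ satisfy $FS(n_1,\dots,n_L)\subseteq N(x,U)$, and Step 2 produces the required $n$ in this set.

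The main obstacle is Step 1: correctly transferring the cube characterization of $\RP^{[D]}$ to the single transitive transformation $T$. The characterization is stated for the $\Z^k$-action, while \cref{lemma: Qd_x_Fdx} is phrased with $\bQ^{[D+1]}(X,T)$, so we must check that \cref{lemma: RPd_G_T} and \cref{lemma: cube_G_T} align the two so that the point $(x,y^{[D+1]}_*)$ lies in the $T$-cube set. The residual set must also be chosen independently of $y$, $U$, $d$, $Y$ and $V$, which is why we intersect over all $D$. Step 2 is routine.
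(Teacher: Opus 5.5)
Your proof is correct. Step 1 is exactly the paper's argument, down to handling the transfer you flag as the main obstacle in one line via \cref{lemma: RPd_G_T} and \cref{lemma: cube_G_T}.

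The genuine difference is in Step 2.

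\emph{The paper's route.} It takes an ergodic measure $\mu$ on $Y$, which has full support by minimality, so $\mu(V)>0$. It then invokes the Furstenberg--Katznelson IP Szemer\'edi theorem to get $\{n:\mu(V\cap T^{-n}V\cap\dots\cap T^{-dn}V)>0\}\in\cF_{fip}^*$, and intersects this with the $\cF_{fip}$ set from Step 1.

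\emph{Your route.} You prove the needed topological statement directly from the Hales--Jewett theorem. You colour words $w\in\{0,\dots,d\}^L$ according to which covering translate $T^{m_l}V$ contains $T^{\sum_i w_in_i}y_0$. A monochromatic line then gives $T^{a}y_0\in\bigcap_{j=0}^{d}T^{-jn}W_l$ with $n$ a nonempty subsum, and translating by $T^{-m_l}$ finishes the argument. This computation is correct.

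\emph{What each buys.} The paper's route yields more than needed, namely positive measure of the intersections. It relies on a deep ergodic theorem, and it needs that theorem in its uniform, finitary form, since it must conclude $\cF_{fip}^*$ rather than merely IP$^*$. Your route is elementary and uses only minimality and no invariant measure. It also makes the required uniformity explicit: every finite IP-set of length $L=HJ(d+1,r)$ contains a topological multiple recurrence time, where $L$ depends only on $d$ and on the number $r$ of translates of $V$ needed to cover $Y$.
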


\begin{proof}
    For $d\in\mathbb{N}$, let $\Omega_d$ be the dense G$_\delta$ subset of $X$ given by \cref{lemma: Qd_x_Fdx}. Define $\Omega = \cap_{d\in\N}\Omega_{d}$. By \cref{lemma: cube_G_T} and \cref{lemma: RPd_G_T}, we have $(x,y^{[d+1]}_{*})\in \bQ^{[d+1]}(X,T)$. Since $x\in \Omega$, it follows that $(x,y^{[d+1]}_{*})\in \overline{\cF^{[d+1]}(T)x^{[d+1]}}$. Therefore,\begin{align*}
        \{k\in \Z: T^{k}x\in U\}\in \cF_{fip}.
    \end{align*}
    
    Let $\mu$ be an ergodic measure for $(Y,T)$. Since $(Y,T)$ is minimal, $\mu$ has full support, in particular, $\mu(V)>0$. Hence, by {\cite{Furstenberg_Katznelson85}}, we have\begin{align*}
        \{n\in\Z: \mu(V \cap T^{-n}V\cap \dots\cap T^{-dn} V)>0 \} \in \cF_{fip}^{*}.
    \end{align*}

    Combining this with $\{k\in \Z: T^{k}x\in U\}\in \cF_{fip}$, we conclude the desired result.
\end{proof}

Given a $\Z^{k}$-system $(X,S_1,\dots,S_k)$, $T\in \langle S_1,\dots,S_k\rangle$, and an integer $d\geq 1$, set \begin{align*}
    \sigma_{d} &= \{(S_n,S_n,\dots,S_n): n\in \Z^k\},\\
    \tau_{d} &=  T\times T^2\times \dots\times T^d,\\
    N_{d}(X,T) &= \overline{\cO}( \Delta^{(d)}(X),\tau_d).
\end{align*}

Here, for $A\subseteq X$, we write $\Delta^{(d)}(A)=\{x^{(d)}:x\in A\}\subseteq X^{d}$, where $x^{(d)} = (x,\dots,x)\in X^{d}$. When there is no ambiguity, we write $N_d(X)$ instead of $N_d(X,T)$.

\begin{lemma}
    Let $(X,S_1,\dots,S_k)$ be a minimal $\Z^k$-system, let $X_0$ be a dense G$_\delta$ $\langle S_1,\dots,S_k\rangle $-invariant subset of $X$, let $T\in \langle S_1,\dots,S_k \rangle$ be a transitive transformation and let $d\in \N$. Then the system $(N_d(X),\langle \sigma_{d},\tau_{d}\rangle)$ is minimal and, for each $j\in \{1,\dots,d\}$, the set\begin{align*}
        M_j=\{(x_1,\dots,x_d)\in N_{d}(X,T): x_j\in X_{0} \text{ and } (x_1,\dots,x_d)\text{ is a }\tau_{d}^{-1}(T^{(d)})^{j}\text{-minimal point}\}
    \end{align*}

    is dense in $N_{d}(X)$.
\end{lemma}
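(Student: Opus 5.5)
We prove the two assertions in turn. For minimality, we show that the $\Delta^{(d)}(X)$-points have dense orbit closures under $\langle\sigma_d,\tau_d\rangle$ and are minimal. Since $\sigma_d$ commutes with $\tau_d$ and preserves $\Delta^{(d)}(X)$, the set $N_d(X)$ is invariant under the group $G_d=\langle \sigma_d,\tau_d\rangle$. Moreover, for every $x\in X$,
\begin{align*}
    \overline{G_d\, x^{(d)}}\supseteq \overline{\sigma_d x^{(d)}}=\Delta^{(d)}(X),
\end{align*}
by minimality of $(X,S_1,\dots,S_k)$. Hence $\overline{G_d x^{(d)}}\supseteq \overline{\cO}(\Delta^{(d)}(X),\tau_d)=N_d(X)$, so every diagonal point has a dense $G_d$-orbit.

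To obtain minimality, it then suffices to show that some (hence every) diagonal point is $G_d$-minimal. We follow the argument of Glasner for $\Z$-systems (\cite[Theorem 4.2]{Glasner_Huang_Shao_Weiss_Ye_Topological_characteristic_factors:2020}). Take a minimal subset $M\subseteq N_d(X)$ for $G_d$. Its projection to the first coordinate is $\langle S\rangle$-invariant and closed, so it equals $X$. The standard approach is to show that $x^{(d)}$ is a minimal point for the commuting system $(X^d,\langle \sigma_d,\tau_d\rangle)$ via the Furstenberg–Weiss topological multiple recurrence for commuting homeomorphisms. Concretely, $x^{(d)}$ is uniformly recurrent under $\langle T\times\cdots\times T^d,\sigma_d\rangle$ because, for a minimal point $x$ of $(X,\langle S\rangle)$, the multiple Birkhoff theorem (Furstenberg–Weiss, in its Auslander–Ellis form) gives that $(x,\dots,x)$ is proximal to a minimal point of $X^d$ with the same diagonal coordinates, and then $x^{(d)}$ itself is minimal. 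Equivalently, we use the Ellis-semigroup argument: choose a minimal idempotent $u$ in the enveloping semigroup of $(X^d, G_d)$ with $u x^{(d)}=x^{(d)}$. Because $X$ is minimal and distal-free arguments apply coordinatewise through $\sigma_d$, such an idempotent exists. This minimality step is the main obstacle; it is the $\Z^k$ analogue of Glasner's result, and we expect to reproduce Glasner's induction on $d$ with $\sigma_d$ in place of the diagonal $T$-action.

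For the density of $M_j$, set $R_j=\tau_d^{-1}(T^{(d)})^j$, which lies in $G'=\langle\sigma_d,\tau_d, T^{(d)}\rangle$. Since $T\in\langle S\rangle$, we have $T^{(d)}\in\sigma_d$, so $R_j\in G_d$. The $j$-th coordinate of $R_j$ is the identity, so $R_j$ fixes $x_j$. By the theory of minimal points (Auslander–Ellis), the set of $R_j$-minimal points is residual in any minimal $G_d$-system that is a commuting action. More precisely, the map $\phi_j\colon N_d(X)\to X$, $\bx\mapsto x_j$, is a factor map of $G_d$-systems onto $(X,\langle S\rangle)$. Moreover, $\phi_j^{-1}(X_0)$ is dense $G_\delta$ in $N_d(X)$: it is a preimage of a dense $G_\delta$ set, and it is dense because, by the first part, the diagonal points $x^{(d)}$ with $x\in X_0$ have dense orbits and are contained in it, the set being $G_d$-invariant as $X_0$ is $\langle S\rangle$-invariant. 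Given an open set $W\subseteq N_d(X)$, we then pick a $G_d$-translate $g x^{(d)}\in W$ with $x\in X_0$, and apply Auslander–Ellis to $R_j$ to find an $R_j$-minimal point $\bz$ in $\overline{\cO}(g x^{(d)},R_j)$ that is proximal to it. Since $R_j$ fixes the $j$-th coordinate, every point of this orbit closure has $j$-th coordinate $z_j=(gx^{(d)})_j\in X_0$. Choosing $\bz$ close enough to $g x^{(d)}$, which is possible because proximality lets us take $\bz = p\,(gx^{(d)})$ with $p$ an idempotent, we get $\bz\in W\cap M_j$.
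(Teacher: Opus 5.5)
Your proof of the density of $M_j$ fails at its last step. Write $R_j=\tau_d^{-1}(T^{(d)})^j$ and $\by=gx^{(d)}\in W$. The Auslander--Ellis theorem gives an $R_j$-minimal point $\bz=p\by$ proximal to $\by$. Proximality only says $\inf_n d(R_j^n\bz,R_j^n\by)=0$; it says nothing about $d(\bz,\by)$, and no choice of idempotent $p$ helps. For example, if $\by\notin\overline{\{R_j^n\by:n\neq 0\}}$, then every $R_j$-minimal subset of $\overline{\cO}(\by,R_j)$ lies in that closed set, since minimal points are recurrent. So all $R_j$-minimal points of this orbit closure stay at positive distance from $\by$, and none lies in a small $W$. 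Your side remark that $R_j$-minimal points form a residual set is also unsupported, and density of two sets would not give density of their intersection anyway.

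The missing observation is that no localization is needed. The group $G_d=\langle\sigma_d,\tau_d\rangle$ is abelian and contains $R_j$, so every $g\in G_d$ maps $R_j$-minimal points to $R_j$-minimal points. Also $(g\bx)_j\in X_0$ whenever $x_j\in X_0$, because $X_0$ is $\langle S_1,\dots,S_k\rangle$-invariant and $T\in\langle S_1,\dots,S_k\rangle$. Hence $M_j$ itself is $G_d$-invariant. It is nonempty: as you noted, for $x\in X_0$ any $R_j$-minimal point of $\overline{\cO}(x^{(d)},R_j)\subseteq N_d(X)$ has $j$-th coordinate $x$. Minimality of $(N_d(X),G_d)$ then gives density at once. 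Put differently, fix the minimal point $\bz_0$ first and then choose $g$ with $g\bz_0\in W$, rather than choosing $gx^{(d)}\in W$ first. This is the paper's argument. There it is phrased as the $G_d$-invariance of $\bigcup_{n\in\Z^k}(S_n\times\cdots\times S_n)M$ for an $R_j$-minimal set $M\subseteq\overline{\cO}(x^{(d)},R_j)$, using $\tau_dM=(T^{(d)})^jM$.

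For the minimality part, your reduction to the $G_d$-minimality of one diagonal point is correct. Deferring to Glasner's argument, with the minimal diagonal action $\sigma_d$ in place of $T\times\cdots\times T$, is exactly what the paper does: it cites the proof of Glasner's Proposition 1.55. However, the sentences you offer in place of that argument are not valid and should be dropped. A point proximal to a minimal point need not be minimal. And the existence of a minimal idempotent $u$ with $ux^{(d)}=x^{(d)}$ is equivalent to $x^{(d)}$ being $G_d$-minimal, so that step is circular. This part genuinely needs a multiple-recurrence argument. Indeed, minimality of $(N_d(X),G_d)$ already implies $U\cap T^{-n}U\cap\dots\cap T^{-(d-1)n}U\neq\emptyset$ for some $n\neq 0$, so it cannot follow from soft Ellis-semigroup facts alone.
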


\begin{proof}
    The minimality of $(N_d(X),\langle \sigma_d,\tau_d\rangle)$ follows from the same proof as {\cite[Proposition 1.55]{Glasner_ergodic_theory_joinings:2003}}. We now prove the second statement.

    Fix $j\in \{1,\dots,d\}$. Let $x\in X_0$ and let $(M,\tau_{d}^{-1}(T^{(d)})^{j})$ be a minimal subsystem of the system $(\overline{\cO}(x^{(d)},\tau_{d}^{-1}(T^{(d)})^{j}),\tau_{d}^{-1}(T^{(d)})^{j})$. Note that $\cup_{n\in \Z^k}(S_n\times \dots \times S_n) M$ is a $\langle \sigma_{d},\tau_{d}\rangle$-invariant subset of $M_j$. Therefore, by the minimality of $(N_d(X),\langle \sigma_d,\tau_d\rangle)$, this set is dense in $N_d(X)$.
\end{proof}

Using the preceding lemmas and following the proof of {\cite[Theorem 4.2]{Glasner_Huang_Shao_Weiss_Ye_Topological_characteristic_factors:2020}} without any further changes, we obtain the following topological characteristic factor theorem.

\begin{theorem}
    Let $\pi\colon (X,S_1,\dots,S_k) \to (Y,S_1,\dots,S_k)$ be an open factor between minimal $\Z^k$-systems, and let $T\in \langle S_{1},\dots,S_{k}\rangle$ be a transitive transformation. If $X_\infty$ is a factor of $Y$, then $Y$ is a \emph{$d$-step topological characteristic factor} of $X$ for $T$ for all $d\in \N$, that is, there exists a dense G$_\delta$ subset $\Omega$ of $X$ such that for each $x\in \Omega$,\begin{align*}
       (\pi^{(d)})^{-1}(\pi^{(d)}( \overline{\cO}(x^{(d)},\tau_d)))= \overline{\cO}(x^{(d)},\tau_d).
    \end{align*}
\end{theorem}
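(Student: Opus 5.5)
We follow the scheme of the proof of \cite[Theorem 4.2]{Glasner_Huang_Shao_Weiss_Ye_Topological_characteristic_factors:2020}, arguing by induction on $d$. The inclusion $\supseteq$ is trivial. The case $d=1$ is also immediate: $\tau_1=T$ is transitive, so for $x$ in the dense G$_\delta$ set of $T$-transitive points we have $\overline{\cO}(x,T)=X$. For the inductive step, assume the conclusion holds for $d-1$ with a dense G$_\delta$ set $\Omega_{d-1}$.

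We pass to the system $(N_d(X),\langle\sigma_d,\tau_d\rangle)$, which is minimal by the preceding lemma, and to its factor $N_d(Y)$ via $\pi^{(d)}$. The key claim is the following. For a dense set of $\bx\in N_d(X)$, every $\bx'\in N_d(X)$ with $\pi^{(d)}(\bx')=\pi^{(d)}(\bx)$ lies in $\overline{\cO}(\bx,\tau_d)$, and similarly for the corresponding points of $N_d(X)$ read as fibres over the diagonal. To prove it, take a point in $M_j$ from the previous lemma, i.e.\ a $\tau_d^{-1}(T^{(d)})^j$-minimal point whose $j$-th coordinate lies in a good invariant set $X_0$. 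Apply the inductive hypothesis to the remaining $d-1$ coordinates, using the transformation $\tau_d^{-1}(T^{(d)})^j$, which acts coordinatewise as powers $T^{j-i}$ of a transitive map; here \cref{lemma: RPd_G_T} makes the relevant factors independent of which power is used. This handles fibres in all coordinates but the $j$-th.

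The $j$-th coordinate is the heart of the matter. We need to show that if $(x_j,x_j')$ lie in the same $\pi$-fibre, they can be joined within the orbit closure. Since $X_\infty$ is a factor of $Y$, such pairs lie in $\RP^{[\infty]}(X,T)$. The characterization lemma of $\RP^{[\infty]}$ pairs (for points of $\Omega$) then lets us choose times $n$ with $T^nx_j$ near $x_j'$ that are simultaneously multiple recurrence times $V\cap T^{-n}V\cap\dots\cap T^{-dn}V\ne\emptyset$ for an auxiliary minimal system built from the other coordinates. This allows us to move the $j$-th coordinate while keeping the others approximately fixed.

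Openness of $\pi$ is used to lift neighbourhoods: $\pi^{(d)}$ restricted to $N_d$ is semi-open, so density and G$_\delta$ statements transfer. A standard Baire argument then upgrades ``dense set of good points in $N_d(X)$'' to ``dense G$_\delta$ set $\Omega\subseteq X$ of $x$ with $x^{(d)}$ good.'' To do this, we write the saturation condition as a countable intersection over a basis of open sets $U_1\times\dots\times U_d$, and use the lower semicontinuity of $x\mapsto\overline{\cO}(x^{(d)},\tau_d)$ together with the continuity points of the upper semicontinuous map $x\mapsto(\pi^{(d)})^{-1}\pi^{(d)}\overline{\cO}(x^{(d)},\tau_d)$.

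The main obstacle is the $j$-th coordinate step. Combining the $\cF_{fip}$ return set coming from $\RP^{[\infty]}$ with the $\cF_{fip}^*$ multiple recurrence set has to be done uniformly while the other coordinates are held via $\tau_d^{-1}(T^{(d)})^j$-minimality. Following GHSWY, we would use the minimal point together with the lemma just proved to produce these simultaneous returns. Every other step is a formal transcription of the $\Z$-case, with \cref{lemma: cube_G_T} and \cref{lemma: RPd_G_T} replacing the single-transformation facts.
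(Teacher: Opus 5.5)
Your plan of transcribing the proof of \cite[Theorem 4.2]{Glasner_Huang_Shao_Weiss_Ye_Topological_characteristic_factors:2020} with the $N_d$/$M_j$ lemma, the $\RP^{[\infty]}$ lemma, \cref{lemma: cube_G_T} and \cref{lemma: RPd_G_T} is exactly what the paper does. The sketch as written, however, is not yet that argument. Its key claim lives in the wrong space, its passage to diagonal points fails, and the one step with real content is left as ``we would use\dots''.

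\textbf{The key claim and the passage to the diagonal.} The statement that makes the argument run is that $N_{d+1}(X)$ is $\pi^{(d+1)}$-saturated, i.e. $N_{d+1}(X)=(\pi^{(d+1)})^{-1}(N_{d+1}(Y))$. It has one coordinate more than the orbit closure; the extra coordinate carries the base point. Your claim is about $\tau_d$-orbit closures of a dense set of points of $N_d(X)$. No Baire argument brings such a claim down to $x^{(d)}$: for $d\ge 2$ and infinite $X$, the diagonal $\Delta^{(d)}(X)$ is closed and nowhere dense in $N_d(X)$. The correct transfer goes as follows.
\begin{itemize}
\item $N_{d+1}(X)$ is the closure of the points $(T^bu,T^{2b}u,\dots,T^{(d+1)b}u)$. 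So saturation gives, for open $V_0,\dots,V_d$ with $\bigcap_i\pi(V_i)\neq\emptyset$, some $b$ with $V_0\cap T^{-b}V_1\cap\dots\cap T^{-db}V_d\neq\emptyset$.
\item A Baire argument over a countable base of $X$ then gives a dense G$_\delta$ set of $x$ with $(\pi^{-1}\pi(x))^d\subseteq\overline{\cO}(x^{(d)},\tau_d)$.
\item Openness of $\pi^{(d)}$ together with $\tau_d$-invariance extends this to all of $\overline{\cO}(x^{(d)},\tau_d)$.
\end{itemize}

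\textbf{The coordinate-change step.} Saturation of $N_{d+1}(X)$ is proved one coordinate at a time. By density of $M_j$ (with $X_0$ the set from the $\RP^{[\infty]}$ lemma) and continuity of $y\mapsto\pi^{-1}(y)$, it suffices to replace $x_j$ by $x_j'\in\pi^{-1}(\pi(x_j))$ when $\bx\in M_j$.

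The other coordinates are not handled by an inductive hypothesis. That hypothesis concerns diagonal points under $T\times\dots\times T^{d-1}$, not the non-diagonal point formed by the other coordinates of $\bx$ under $\prod_{i\neq j}T^{j-i}$. They are handled by the same time that moves $x_j$, and this is the computation you must supply.
\begin{itemize}
\item Let $R_j=\tau_{d+1}^{-1}(T^{(d+1)})^j$. Take as auxiliary minimal system $Z=\overline{\cO}(\bx,R_j)\subseteq N_{d+1}(X)$. Every point of $Z$ has $j$-th coordinate exactly $x_j$; this is what lets a return time of $x_j$ to a neighbourhood $U$ of $x_j'$ be used at all.
\item You then need one $g\in\langle\sigma_{d+1},\tau_{d+1}\rangle$ acting on coordinate $j$ by $T^n$ and on each $i\neq j$ like some $R_j^{k_im}$ from the recurrence pattern. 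Since $g$ acts on coordinate $i$ by some $T^{a+ib}$, this is impossible with $m=n$: it would force $b=-n(k_i+\frac{1}{i-j})$ for every $i\neq j$.
\item It does work with $L=d!$, $L\mid n$, $m=n/L$, $g=(T^{(d+1)})^{n(1+j)}\tau_{d+1}^{-n}$ and $k_i=L+L/(j-i)\in[0,2L]$. Let $W=\prod_i W_i$ be a product neighbourhood of $\bx$ with $W_j=X$. If $\bz\in Z\cap W$ and $R_j^{kn/L}\bz\in W$ for $0\le k\le 2L$, then $g\bz\in N_{d+1}(X)$ has $j$-th coordinate $T^nx_j\in U$ and $i$-th coordinate $(R_j^{k_in/L}\bz)_i\in W_i$.
\item So the $\RP^{[\infty]}$ lemma must be used with $2L$ terms and along multiples of $L$. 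This is legitimate: grouping the generators of a long finite IP set inside $\{k:T^kx_j\in U\}$ into blocks whose sums are divisible by $L$ shows $\{m:T^{Lm}x_j\in U\}\in\cF_{fip}$.
\end{itemize}
Without this computation, ``moving the $j$-th coordinate while keeping the others approximately fixed'' is not justified.
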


Using this theorem and following the proof of {\cite[Lemma 2.19]{Qiu_poly_orbits_tot_minimal:2023}}, we obtain the following linear recurrence result.

\begin{lemma}
    Let $\pi\colon (X,S_1,\dots,S_k) \to (Y,S_1,\dots,S_k)$ be an open factor between minimal $\Z^k$-systems, and let $T\in \langle S_{1},\dots,S_{k}\rangle$ be a transitive transformation. If $X_\infty$ is a factor of $Y$, then for any distinct nonzero integers $a_1,\dots,a_s$, there is a dense G$_\delta$ subset $\Omega$ of $X$ such that for any open subsets $V_0,V_1,\dots,V_s$ of $X$ with $\cap_{i=0}^{s}\pi(V_i)\neq \emptyset$ and any $z\in V_0\cap \Omega$ with $\pi(z)\in\cap_{i=0}^{s}\pi(V_i)$, there exists some $A\in \cF_{fip}$ such that $T^{a_i n}z\in V_i$, for $i=1,\dots,s$ and $n\in A$.
\end{lemma}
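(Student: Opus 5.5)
Following the proof of {\cite[Lemma 2.19]{Qiu_poly_orbits_tot_minimal:2023}}, the plan is to combine the preceding topological characteristic factor theorem, applied with $d=\max_i |a_i|$, with the $\cF_{fip}$-characterization of $\RP^{[\infty]}$ pairs established above. Let $\Omega_1$ be the dense G$_\delta$ set from the characteristic factor theorem for $T$ and this $d$ (and also for $T^{-1}$, since negative $a_i$ are allowed; $T^{-1}$ is transitive as well). Let $\Omega_2$ be the dense G$_\delta$ $\langle S_1,\dots,S_k\rangle$-invariant set from the lemma characterizing $\RP^{[\infty]}(X,T)$. Set $\Omega=\Omega_1\cap\Omega_2$.

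Now fix open sets $V_0,\dots,V_s$ and $z\in V_0\cap\Omega$ with $\pi(z)\in\cap_i\pi(V_i)$. Choose $z_i\in V_i$ with $\pi(z_i)=\pi(z)$ for $i=1,\dots,s$.

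The first step is to show that a point of $X^{d}$ with coordinates $z_i$ in slot $a_i$ (and $z$ elsewhere) lies in $\overline{\cO}(z^{(d)},\tau_d)$. The point $\pi^{(d)}(z^{(d)})$ lies in $\pi^{(d)}(\overline{\cO}(z^{(d)},\tau_d))$. The characteristic factor property says this orbit closure is $\pi^{(d)}$-saturated, so every point in the fiber over $\pi(z)^{(d)}$ belongs to it. Hence for every neighborhood we get infinitely many $n$ with $T^{a_in}z\in V_i$ for all $i$ (with $V_i$ replaced by $V_0$ in the unused slots). Taking the slots of negative $a_i$ via $T^{-1}$, or symmetrically via $\tau_d$ over $\{-d,\dots,d\}$, handles signs.

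The second step is upgrading "a return time exists" to "the return times contain an $\cF_{fip}$ set", and this is the main obstacle. I would argue as Qiu does. The set $N=\{n: T^{a_in}z\in V_i\ \forall i\}$ contains, around each of its points, a structure produced iteratively: the saturation yields points of the orbit closure over any fiber, so the set $N$ is "thick relative to cubes". More concretely, I would show $N\in\cF_{fip}$ by induction on the IP length. Given $n_1,\dots,n_\ell$ with all finite sums in $N$, the pieces $\cap_{\alpha}T^{-a_i\sum_{\alpha}n_j}V_i$ are open sets containing $z$-related points in the same $\pi$-fiber structure. Applying the saturation again at the point $T^{\cdot}z$ (using invariance of $\Omega$ under $\langle S\rangle$ and openness of $\pi$ to keep the fiber condition open) produces $n_{\ell+1}$ extending the IP-set.

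Openness of $\pi$ is used exactly here, to guarantee that $\cap\pi(V_i')$ remains a nonempty open set containing the image point at each stage. The $\RP^{[\infty]}$ lemma then allows one to intersect with recurrence sets in $\cF_{fip}^*$ when needed to realign coordinates. Finally, $A$ is the union of the finite IP-sets obtained, which lies in $\cF_{fip}$ and satisfies $T^{a_in}z\in V_i$ for all $n\in A$.
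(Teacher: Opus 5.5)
Your Step 1 is fine, but Step 2 has a genuine gap, and Step 2 is the only place where $\cF_{fip}$ is produced.

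\textbf{The missing fibre condition.} To extend $n_1,\dots,n_\ell$ you apply Step 1 at $z$ to the open sets $V_i'=\bigcap_{\alpha\subseteq[\ell]}T^{-a_in_\alpha}V_i$, with $n_\emptyset=0$. Step 1 needs $\pi(z)\in\pi(V_i')$. That is, you need a point $w_i$ with $\pi(w_i)=\pi(z)$, $w_i\in V_i$, and $T^{a_in_\alpha}w_i\in V_i$ for every nonempty $\alpha$. Nothing in your construction supplies such a $w_i$:
\begin{itemize}
\item $z$ visits $V_i$ at the times $a_in_\alpha$, but $z$ need not lie in $V_i$;
\item $z_i$ lies in $V_i$, but its visits at those times are not controlled.
\end{itemize}
Openness of $\pi$ and invariance of $\Omega$ only make $\pi(V_i')$ open; they do not put $\pi(z)$ into it. The closing appeal to the $\RP^{[\infty]}$-lemma to ``realign coordinates'' does not supply a mechanism either.

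\textbf{The extension step is false in general.} If it always succeeded, iterating it would give an infinite IP-set inside $N$. Here is a counterexample.
\begin{itemize}
\item Take a minimal distal $\Z$-system that is not a pro-nilsystem, and let $\pi=\pi_\infty$. This map is open, being a factor map between minimal distal systems.
\item All fibres of $\pi$ are nontrivial, because a distal almost one-to-one extension is an isomorphism.
\item Let $s=1$, $a_1=1$, pick $z\in\Omega$ and $z_1\neq z$ with $\pi(z_1)=\pi(z)$, and let $V_1$ be a neighbourhood of $z_1$ with $z\notin\overline{V_1}$.
\end{itemize}
An IP-set contained in $\{n:T^nz\in V_1\}$ yields, by Hindman's theorem, an idempotent $u\in\beta\Z$ with $uz\in\overline{V_1}$. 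But distality forces $uz=z$, a contradiction. So here the return set lies in $\cF_{fip}$ but contains no IP-set. Finite IP-sets of each length must therefore be produced separately, not by successive extension.

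\textbf{One way to close the gap with your own tools.} Get each finite IP-set from a single application of saturation.
\begin{itemize}
\item Fix $\ell$, take $M>\max_i|a_i|$, and set $b_\alpha=\sum_{j\in\alpha}M^{j-1}$ for $\emptyset\neq\alpha\subseteq[\ell]$.
\item By uniqueness of base-$M$ expansions, the integers $a_ib_\alpha$ are distinct and nonzero.
\item Apply saturation at $z$ for this tuple, placing $z_i$ in every slot $(i,\alpha)$. As in your Step 1, this gives a nonzero $n$ with $T^{a_ib_\alpha n}z\in V_i$ for all $i$ and $\alpha$.
\item Hence the finite IP-set generated by $n,Mn,\dots,M^{\ell-1}n$ lies in $N$. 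Letting $\Omega_\ell$ be the dense G$_\delta$ set for this tuple, take $\Omega=\bigcap_\ell\Omega_\ell$.
\end{itemize}

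\textbf{Sign issue.} Both this argument and your Step 1 need saturation for $T^{c_1}\times\dots\times T^{c_m}$ with arbitrary distinct nonzero $c_j$. Running the stated theorem separately for $T$ and for $T^{-1}$ does not control a tuple with both positive and negative entries.
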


To add polynomial conditions to this result, we use the following lemma.

\begin{lemma}[{\cite[Proposition 3.14]{Leibman05a}}]
    Let $(X,T_1,\dots,T_d)$ be a nilsystem, $x\in X$ and an open neighborhood $U$ of $x$. For any nonconstant integer polynomials $p_1,\dots,p_d$ with $p_i(0)=0$ for every $i=1,\dots,d$, there exists another nilsystem $(Y,T)$ with $y\in Y$ and an open neighborhood $V$ of $y$ such that\begin{align*}
        \{n\in \Z: T^n y\in V\} \subseteq\{n\in\Z: T_1^{p_1(n)}\dots T_d^{p_d(n)}x\in U\}.
    \end{align*}
\end{lemma}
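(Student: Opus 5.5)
The plan is to realize the polynomial orbit $n\mapsto T_1^{p_1(n)}\cdots T_d^{p_d(n)}x$ as the image, under a continuous map, of a linear orbit $n\mapsto T^n y$ in a larger nilsystem. Once we have a nilsystem $(Y,T)$, a point $y\in Y$ and a continuous map $\phi\colon Y\to X$ with
\begin{align*}
    \phi(T^n y)=T_1^{p_1(n)}\cdots T_d^{p_d(n)}x \quad \text{for all } n\in\Z,
\end{align*}
we are done. Indeed, $V=\phi^{-1}(U)$ is open, and it contains $y$ because $p_i(0)=0$ gives $\phi(y)=x\in U$. Moreover $T^ny\in V$ forces $T_1^{p_1(n)}\cdots T_d^{p_d(n)}x\in U$, which is exactly the required inclusion of return-time sets.

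For the construction, write $X=G/\Gamma$ and $x=a\Gamma$, with $G$ nilpotent and $T_i$ translations by $\tau_i\in G$. Set $g(n)=\tau_1^{p_1(n)}\cdots\tau_d^{p_d(n)}a$, a polynomial sequence in $G$. Fix a filtration $G=G_0=G_1\supseteq G_2\supseteq\cdots\supseteq G_{r+1}=\{e\}$ (say, the lower central series shifted suitably to accommodate degrees up to $m=\max\deg p_i$). Let $\mathcal{P}$ be the group of polynomial sequences $h\colon\Z\to G$ adapted to this filtration. By Leibman's/Green--Tao's Taylor expansion, each such $h$ is written uniquely as $h(n)=h_0h_1^{\binom n1}\cdots h_m^{\binom nm}$ with $h_j\in G_j$. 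Hence $\mathcal P$ is a group (Lazard--Leibman) and a finite-dimensional nilpotent Lie group. The $\Gamma$-valued sequences $\mathcal P_\Gamma$ form a cocompact lattice, since the Taylor coefficients range over the lattices $\Gamma\cap G_j$.

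The shift $\sigma h(n)=h(n+1)$ is an automorphism of $\mathcal P$ preserving $\mathcal P_\Gamma$. Let $\tilde G=\mathcal P\rtimes\langle\sigma\rangle$ and $\tilde\Gamma=\mathcal P_\Gamma\rtimes\langle\sigma\rangle$, and put $Y=\tilde G/\tilde\Gamma\cong \mathcal P/\mathcal P_\Gamma$. Take $T$ to be translation by $\sigma$, $y$ the coset of $g$, and $\phi(h\tilde\Gamma)=h(0)\Gamma$. The map $\phi$ is well defined because elements of $\mathcal P_\Gamma$ take values in $\Gamma$. Then $\phi(T^ny)=(\sigma^ng)(0)\Gamma=g(n)\Gamma=T_1^{p_1(n)}\cdots T_d^{p_d(n)}x$, and $\phi(y)=a\Gamma=x$.

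The step I expect to be the main obstacle is verifying that $\tilde G$ is nilpotent, so that $(Y,T)$ is genuinely a nilsystem. The key point is that the commutator $[\sigma,h]$ is, up to conventions, the discrete derivative $h(n+1)h(n)^{-1}$. This derivative lies in the polynomial group for the shifted filtration $G_{\bullet+1}$. Iterating, $(r+1)$-fold commutators involving $\sigma$ land in sequences valued in $G_{r+1}=\{e\}$, which gives nilpotency. Arranging the filtration degrees so that $g$ itself belongs to $\mathcal P$ (for instance, using $G_j=G$ for $j\le m$ and then the lower central series) requires some bookkeeping.

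A secondary point is that $\mathcal P_\Gamma$ is cocompact in $\mathcal P$, which follows from the Taylor-coordinate parametrization. If transitivity or connectedness issues arise, we can restrict to the orbit closure of $y$, which is again a nilsystem and still contains $y$.
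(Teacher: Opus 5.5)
Your argument is correct and is the standard linearization behind Leibman's Proposition 3.14, which the paper only cites. You write the polynomial orbit as $\phi(T^ny)$, where $T$ is translation by the shift on $(\mathcal{P}\rtimes\langle\sigma\rangle)/(\mathcal{P}_\Gamma\rtimes\langle\sigma\rangle)$ and $\phi(h\tilde\Gamma)=h(0)\Gamma$, and then take $V=\phi^{-1}(U)$.

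Some of the bookkeeping needs repair.

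\begin{enumerate}
\item ``$G_j=G$ for $j\le m$, then the lower central series'' is not a filtration when $G$ is non-abelian and $m\ge 2$, since $[G_m,G_m]=[G,G]\not\subseteq G_{2m}$. Take $G_0=G$ and $G_i=\gamma_{\lceil i/m\rceil}(G)$ for $i\ge 1$, where $\gamma_j(G)$ is the lower central series. Your cocompactness claim for $\mathcal{P}_\Gamma$ relies on $\Gamma\cap G_i$ being cocompact in $G_i$, which is the rationality of this series.
\item Nilpotency of $\mathcal{P}\rtimes\langle\sigma\rangle$ does not follow from iterating $[\sigma,\cdot]$ alone, because mixed commutators with elements of $\mathcal{P}$ need not lower the filtration index. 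Instead, refine the $\sigma$-invariant central series $\{h\in\mathcal{P}: h(\Z)\subseteq\gamma_j(G)\}$ of $\mathcal{P}$ by the images of powers of $\sigma-1$. This works because $\sigma-1$ acts nilpotently on each abelian quotient: there it is the difference operator on polynomial sequences of bounded degree.
\end{enumerate}
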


The proof of {\cite[Lemma 4.3]{Qiu_poly_orbits_tot_minimal:2023}} applies without change in our setting, using the previous two lemmas. We thus obtain the following stronger statement. 

\begin{lemma}\label{lemma: linear stronger}
    Let $\pi\colon (X,S_{1},\dots,S_{k})\to (Y,S_1,\dots,S_k)$ be an open factor between minimal $\Z^{k}$-systems, and let $T_1,\dots,T_d\in \langle S_{1},\dots,S_{k}\rangle$ be transitive transformations. If $Y$ is an almost one-to-one extension of $X_\infty$, then for any distinct nonzero integers $c_1,\dots, c_s$, any family of polynomial $d$-tuples $\cC$, and open subsets $V_0,V_1,\dots,V_s$ of $X$ with $\cap_{i=0}^{s}\pi(V_i)\neq \emptyset$, there exist infinitely many $n\in \N$ for which there exists $z\in V_0$ such that $T_1^{c_i n}z\in V_i$ for $1\leq i \leq s$ and $T_1^{q_1(n)}\dots T_d^{q_d(n)}\pi(z)\in \cap_{i=0}^{s}\pi(V_i)$ for $(q_1,\dots,q_d)\in \cC$.
\end{lemma}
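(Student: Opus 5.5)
We follow the strategy of Qiu's Lemma 4.3. The idea is to separate the linear conditions on the open sets $V_i$ in $X$ from the polynomial conditions on the factor: the former we handle with the linear recurrence lemma above, the latter by transferring them to nilsystems where Leibman's result applies. Note first that $Y$ is an almost one-to-one extension of $X_\infty$, so $X_\infty$ is a factor of $Y$ and the linear recurrence lemma (applied with $T=T_1$ and $a_i=c_i$) is available. It gives a dense G$_\delta$ set $\Omega\subseteq X$. Let $W=\cap_{i=0}^{s}\pi(V_i)$, which is open and nonempty because $\pi$ is open. We will shrink $W$ to a nonempty open $W'\subseteq W$ on which the polynomial conditions can be read off on a nilsystem.

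\textbf{Step 1: reduction to the pro-nilfactor.} Let $\theta\colon Y\to X_\infty$ be the almost one-to-one factor map. For almost one-to-one maps between minimal systems, every nonempty open $W\subseteq Y$ contains the preimage $\theta^{-1}(O)$ of some nonempty open $O\subseteq X_\infty$. (Take a point with singleton fibre in $W$ and use upper semicontinuity of fibres.) By \cref{thm: str_thm}, $X_\infty$ is an inverse limit of nilsystems, so after shrinking $O$ we may assume $O=\rho^{-1}(O')$ for a factor map $\rho\colon X_\infty\to Z$ onto a nilsystem $Z$ on which $S_1,\dots,S_k$ act by nilrotations.

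\textbf{Step 2: polynomial returns via Leibman.} Pick $w\in O'$ and a neighbourhood $U\ni w$ with $U\subseteq O'$; we use $U$ rather than $O'$ to allow some slack. Write each tuple $(q_1,\dots,q_d)\in\cC$ as the map $n\mapsto T_1^{q_1(n)}\cdots T_d^{q_d(n)}$ on $Z$. Applying Leibman's proposition to each tuple, and taking products to handle finitely many tuples at once, we obtain a nilsystem $(Y',R)$, a point $y'$ and a neighbourhood $V'$ of $y'$ such that $R^ny'\in V'$ implies $T_1^{q_1(n)}\cdots T_d^{q_d(n)}w\in U$ for all tuples. The condition we actually need is at points $z$ with $\rho\theta\pi(z)$ near $w$, not at $w$ itself. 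To get this we use uniform continuity of finitely many polynomial maps along a given $n$, and we arrange via a joining that the relevant point is $\rho\theta\pi(z)$. This means replacing $(Y',R)$ by its product with $Z$ under $T_1$ and tracking the pair $(\rho\theta\pi(z),y')$ inside a minimal subsystem.

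\textbf{Step 3: combining with the linear recurrence.} Take $z\in V_0\cap\Omega$ with $\pi(z)\in\theta^{-1}(O)$. The linear recurrence lemma gives $A\in\cF_{fip}$ with $T_1^{c_in}z\in V_i$ for all $n\in A$. By Furstenberg--Katznelson (or simply IP-recurrence in the minimal nilsystem $(Y',R)$), the return set $\{n:R^ny'\in V'\}$ is in $\cF_{fip}^*$, so it meets $A$, and in fact meets it in infinitely many $n$ (remove finitely many elements of $A$ and repeat). For each such $n$ the polynomial conditions hold at $\pi(z)$, which proves the claim.

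\textbf{Main obstacle.} The delicate point is Step 2. Leibman's lemma controls returns of a fixed point $w$, whereas we need the polynomial return for the specific point $\pi(z)$, and $z$ has to come from the generic set $\Omega$. The plan is to fold the polynomial conditions into an IP-rich set: apply Leibman's lemma at the nilfactor point $\rho\theta\pi(z)$ itself (nilsystems are distal, so every point works). This gives $(Y',R,y',V')$ depending on $z$, which is harmless because $z$ is fixed before $A$ is chosen. With that ordering (first $z$, then $A$, then the nil return set), the intersection argument of Step 3 goes through.
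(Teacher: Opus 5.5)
Your final plan is exactly the argument of Qiu's Lemma~4.3 that the paper invokes (with the linear $\cF_{fip}$ lemma and Leibman's proposition as inputs), so the proposal is correct in substance: fix $z\in V_0\cap\Omega$ with $\rho\theta\pi(z)\in O'$, take $A\in\cF_{fip}$ from the linear recurrence lemma, apply Leibman at $\rho\theta\pi(z)$ with neighbourhood $O'$, and intersect $A$ with the resulting nil return set. Two repairs are needed: discard the uniform-continuity/joining detour in Step~2, which fails because a neighbourhood chosen by continuity for one $n$ cannot serve infinitely many $n$ (your last paragraph already makes it unnecessary); and justify $\{n:R^ny'\in V'\}\in\cF_{fip}^*$ from the nil structure, e.g.\ via Folkman's finite unions theorem and the triviality of $\RP^{[d]}$ on the minimal orbit closure of $y'$, since Furstenberg--Katznelson concerns returns of sets rather than points, and IP-recurrence of distal points only gives IP$^*$, which is too weak to meet a set in $\cF_{fip}$.
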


Finally, it is worth noting that the results of this section also allow one to extend the saturation theorem for polynomial iterates in \cite{Qiu_poly_orbits_tot_minimal:2023,Ye_Yu_polynomial_saturation:2025} to transitive transformations in minimal $\Z^k$-systems.

\section{Topological characteristic factor for a very nice family}

In this section, following the approach of \cite{Chu_Frantzikinakis_Host_ergodic_averages_distinct_degree:2011,Qiu_poly_orbits_tot_minimal:2023}, we prove the following theorem.\begin{theorem}\label{thm: TFC_nice_pol}
    Let $\pi\colon (X,S_1,\dots,S_k) \to (Y,S_1,\dots,S_k)$ be an open factor between minimal $\Z^k$-systems, and $T_1,\dots,T_d\in \langle S_{1},\dots,S_{k}\rangle$ be transitive transformations. If $Y$ is an almost one-to-one extension of $X_\infty$, then for any open subsets $V_{0},V_{1},\dots,V_{s}$ of $X$ with $\cap_{i=0}^{s}\pi(V_i)\neq \emptyset$ and any very nice family of polynomial $d$-tuples $\big( (p_{1,1},\dots,p_{d,1}),\dots,(p_{1,s},\dots,p_{d,s})\big)$, there exists some $n\in \Z$ such that\begin{align*}
        V_{0}\cap T_{1}^{-p_{1,1}(n)}\cdots T_{d}^{-p_{d,1}(n)}V_{1}\cap \dots \cap T_{1}^{-p_{1,s}(n)}\cdots T_{d}^{-p_{d,s}(n)}V_{s}\neq \emptyset.
    \end{align*}
\end{theorem}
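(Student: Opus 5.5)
We will follow the PET-induction scheme of Chu--Frantzikinakis--Host, as adapted to the topological setting by Qiu. The induction runs over a suitable ordering of very nice families (weight vectors recording degrees and leading data of the differences $p_{1,1}-p_{1,j}$), nested inside an outer induction on $d$. The base of the PET induction is the case where all tuples $(p_{1,j},\dots,p_{d,j})$ have the relevant polynomials linear in the first coordinate with the remaining coordinates controlled. Here \cref{lemma: linear stronger} applies directly: with $c_i$ the leading coefficients of the linear parts of $p_{1,i}$, it produces $z\in V_0$ with $T_1^{c_i n}z\in V_i$. At the same time it keeps the lower-degree contributions $T_2^{p_{2,i}(n)}\cdots T_d^{p_{d,i}(n)}\pi(z)$ inside $\bigcap\pi(V_i)$. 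Openness of $\pi$, together with the fact that $Y$ is an almost one-to-one extension of $X_\infty$, then lets us lift and adjust these points back into $V_i$.

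For the inductive step we take a very nice family $\cP$ and open sets with $\bigcap_{i}\pi(V_i)\neq\emptyset$. We first use minimality of the $\Z^k$-action to find $m_1,\dots,m_r\in\Z^k$ with $\bigcup_l S_{m_l}^{-1}V_0$ covering $X$. We then apply the van der Corput type ``topological differencing'' step. We look for $n$ and $h$ such that the shifted family
\[
\big(p_{i,j}(n+h)-p_{i,1}(h)\big)_{i,j}\ \cup\ \big(p_{i,j}(n)-p_{i,1}(h)\big)_{i,j},
\]
with the first tuple removed, has a nonempty return. This is the family obtained from $\cP$ by the standard $\partial$-operation, and by condition (3) of niceness it has strictly smaller weight. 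The content of the very nice hypothesis is that this derived family, after reordering so that a tuple with maximal first-coordinate degree comes first, is again very nice. When the first coordinate vanishes from some index on (case (2) of the definition), the tuples with $p_{1,j}=0$ form a very nice family of $(d-1)$-tuples. We handle them by the outer induction on $d$ with $T_2,\dots,T_d$ in place of $T_1,\dots,T_d$, using \cref{lemma: RPd_G_T} to see that $X_\infty$ is the same for every transitive $T_i$.

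To pass from a return for the derived family to one for $\cP$, we do the following. We choose sets $W_j=V_j\cap T^{-\ast}(\cdot)$ as in Qiu's argument, so that the projections $\pi(W_j)$ still meet. This is the place where openness of $\pi$ is used, since it guarantees that $\pi(W_j)$ is open. The induction hypothesis applied to the $W_j$ yields a point $z$ and an integer $n$. Translating by $T_1^{p_{1,1}(h)}\cdots T_d^{p_{d,1}(h)}$ then gives a point in $V_0\cap\bigcap_j T^{-\mathbf p_j(n)}V_j$. Throughout, we keep the requirement that the projections to $Y$ intersect, by tracking all polynomial iterates on $Y$ simultaneously through the polynomial part of \cref{lemma: linear stronger}, which is available because its family $\cC$ is arbitrary.

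The main obstacle will be the bookkeeping that the differencing operation preserves the class of very nice families and strictly decreases the weight. Especially delicate is the transition into case (2), where the first coordinate drops out and the recursion hands off to $d-1$ transformations. A second difficulty is the need to maintain $\bigcap\pi(\cdot)\neq\emptyset$ at every stage, since this is the only form in which the hypothesis about $X_\infty$ can be fed back into \cref{lemma: linear stronger}. My plan is to first check the weight decrease using conditions (1)–(3) exactly as in Chu--Frantzikinakis--Host. I will then verify the projection condition by arguing on $Y$, using the dense set of points of $Y$ whose fibre over $X_\infty$ is a singleton.
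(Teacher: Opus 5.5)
There is a genuine gap in your inductive step. First, the derived family you write, $p_{i,j}(n+h)-p_{i,1}(h)$ and $p_{i,j}(n)-p_{i,1}(h)$, subtracts something constant in $n$. It therefore has the same leading terms as $\cP$, and its type does not drop. The operation must subtract $p_{i,1}(n)$, with $\partial_h p=p(\cdot+h)-p(h)$.

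More seriously, even with the correct operation, a single differencing step does not close in the topological setting. Write $T^{\mathbf p(n)}=T_1^{p_1(n)}\cdots T_d^{p_d(n)}$. A return for $(\mathbf p_1,h)\vdc\cP$ gives $y$ and $w=T^{-\mathbf p_1(n)}y$ with control on $T^{\mathbf p_j(n)}w$. Nothing, however, places $w$ in $V_0$, and translating by the fixed element $T^{\mathbf p_1(h)}$ cannot repair this.

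The paper replaces the van der Corput inequality by an iterated colour-focusing construction. This is where your covering by translates is actually needed; you introduce it but never use it. Let $S_{a_1}V_m,\dots,S_{a_N}V_m$ be the translates covering the fibre over $\pi(x)$. One builds $n_1,\dots,n_N$ and $x_1,\dots,x_N$ with $T^{\mathbf p_m(n_j+\dots+n_\ell)}x_\ell\in S_{a_j}V_m$ for all $j\le\ell$. At step $\ell$ one applies the induction hypothesis to the $\ell$-parameter family $(\mathbf p_1,n_1,\dots,n_{\ell-1})\vdc\cP$, which is why \cref{lemma: reduce_type} is stated for $h_1,\dots,h_\ell$. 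At the end, pigeonhole gives $x_N\in S_{a_j}V_0$ for some $j$, and then $z=S_{-a_j}x_N$, $n=n_j+\dots+n_N$ work.

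For this to run, the condition $\bigcap\pi(\cdot)\neq\emptyset$ must be regenerated at every step, and one must end with $\pi(x_N)\in B(\pi(x),\delta)$. That requires controlling $\pi(T^{-\mathbf p_1(n_\ell)}y_\ell)$ and $T^{\mathbf q(n_j+\dots+n_\ell)}\pi(x_\ell)$. So the statement proved by induction must be the stronger property $\Lambda(\cP,\cC)$:
\begin{enumerate}
\item there are infinitely many good $n$, so the parameters can be taken large enough for \cref{lemma: reduce_type};
\item $T^{\mathbf q(n)}\pi(z)\in\bigcap_i\pi(V_i)$ for an arbitrary auxiliary family $\cC$, into which one feeds $-\mathbf p_1$, $\mathbf q-\mathbf p_1$ and $\partial_{n_j+\dots+n_{\ell-1}}\mathbf q-\mathbf p_1$.
\end{enumerate}
Using $\cC$ only at the base via \cref{lemma: linear stronger}, or appealing to singleton fibres of $Y\to X_\infty$, does not supply this.

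Three further points in your outline would fail.

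\textbf{No outer induction on $d$.} Tuples with $p_{1,j}=0$ must return at the same $n$ and for the same point as the others, so they cannot be handed off separately to $T_2,\dots,T_d$. In the paper, the recursive clause of ``very nice'' only guarantees that the differenced family can be reordered into a very nice one. A single PET induction on the lexicographic matrix type then covers all rows.

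\textbf{Degree-$1$ tuples need separate treatment.} For linear $\mathbf p$ one has $\partial_h\mathbf p-\mathbf p_1=\mathbf p-\mathbf p_1$, so the differenced copies collapse. One point would then have to lie both in $B(x_{\ell-1},\eta_\ell)$ and in $I_\ell\cap S_{a_\ell}V_m$. This is why the paper runs the colour-focusing step only for families all of whose tuples have degree at least $2$. Mixed families go through \cref{claim 1} and \cref{claim 2}: an inner induction on the linear part combined with a shift by $k$.

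\textbf{The base case.} Once $\deg p_{1,1}=1$, niceness forces $p_{i,j}=0$ for $i\ge2$, so there are no lower-degree contributions to lift. In any case, the proposed lift (from $T_1^{c_in}z\in V_i$ plus information on $\pi(z)$ to $T^{\mathbf p_i(n)}z\in V_i$) would not be valid.
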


\subsection{Types of families and the van der Corput operation}

To prove \cref{thm: TFC_nice_pol}, we use a PET induction argument, introduced in \cite{Bergelson_WM_PET:1987}. Following \cite{Chu_Frantzikinakis_Host_ergodic_averages_distinct_degree:2011,Bergelson_Leibman96}, we define in this subsection the type of a family of polynomial tuples and an operation that reduces its type.

\subsubsection{Definition of type}

We fix an integer $D\geq 1$ and consider only families of polynomial $d$-tuples of degree at most $D$.

Two nonconstant polynomials $p,q\in\Z[t]$ are said to be \emph{equivalent}, and we write $p\sim q$, if they have the same degree and the same leading coefficient.

Let $(\cP_1,\dots,\cP_d) = \big( (p_{1,1},\dots,p_{d,1}),\dots,(p_{1,s},\dots,p_{d,s})\big)$ be an ordered family of $s$ polynomial $d$-tuples. For $i=1,\ldots, d$, we define $\widehat{\cP}_i$ to be the following set (possibly empty)\begin{align*}
    \widehat{\cP}_i=\{ \text{nonconstant } p_{i,m} \in \cP_i\colon p_{i',m} \text{ is constant for } i'<i\}.
\end{align*}

In particular, $\widehat{\cP}_1$ is the collection of nonconstant polynomials of $\cP_1$.

For $i=1,\ldots, d$ and $j=1,\ldots, D$, we let $w_{i,j}$ be the number of distinct non-equivalent classes of polynomials of degree $j$ in the family $\widehat{\cP}_i$. We define the \emph{(matrix) type} of the family $(\cP_1,\ldots, \cP_d)$ to be the matrix\begin{align*}
    W=\begin{pmatrix}
w_{1,D}& \ldots &  w_{1,1}\\ w_{2,D}& \ldots& w_{2,1}\\ \vdots & \ldots &\vdots \\
w_{d,D}& \ldots& w_{d,1}
\end{pmatrix}.
\end{align*}

We order these  types  lexicographically: Given two $d\times D$ matrices $W=(w_{i,j})$ and $W'=(w'_{i,j})$, we say that the first is bigger than the second, and write $W>W'$, if $w_{1,D}>w'_{1,D}$, or $w_{1,D}=w'_{1,D}$ and $w_{1,D-1}>w'_{1,D-1}$, $\ldots$, or $w_{1,i}=w'_{1,i}$ for $i=1,\ldots,D$ and $w_{2,D}>w'_{2,D}$, and so on.

\subsubsection{The van der Corput operation}
Given a family $\cP=(p_1,\ldots,p_s)$ of polynomials, $p\in \Z[t]$, and $h\in\Z$, we define
\begin{align*}
    (\partial_h p)(n) = p(n+h)-p(h),\, \partial_h \cP = (\partial_h p_1,\dots, \partial_h p_s)\, \text{ and } \,\cP -p=(p_1-p,\dots,p_s-p).
\end{align*}

Given a family $(\cP_1,\ldots,\cP_d)$ of $d$-tuples of polynomials, $(p_1,\ldots,p_d)$ a $d$-tuple of polynomials, and $h_1,\dots,h_\ell\in\N$, define\begin{align*}
    (p_1,\ldots, p_d,h_1,\dots,h_\ell)\vdc(\cP_1,\ldots, \cP_d) =(\tilde{\cP}_{1,h_1,\dots,h_\ell},\dots ,\tilde{\cP}_{d,h_1,\dots,h_\ell})^*,
\end{align*}

 where, for each $i=1,\dots,d$,\begin{align*}
     \tilde{\cP}_{i,h_1,\dots,h_\ell}=(\partial_{h_1+h_2+\dots+h_\ell}\cP_i-p_i,\partial_{h_2+\dots+h_\ell}\cP_i-p_i,\dots,\partial_{h_\ell}\cP_i-p_i,\cP_i-p_i).
 \end{align*}

 Here $^*$ denotes the operation of removing from the resulting family all $d$-tuples consisting entirely of constant polynomials and repeated $d$-tuples. Notice that if $(\cP_1,\ldots, \cP_d)$ is a degree $D$ family containing $s$ polynomial $d$-tuples, then the family $(p_1,\ldots, p_d,h_1,\dots,h_\ell)\vdc(\cP_1,\ldots, \cP_d)$ has degree at most $D$ and contains at most $(\ell+1)s$ polynomial $d$-tuples for every $h_1,\dots,h_\ell\in\N$,.

In \cite[Lemma 5.4]{Chu_Frantzikinakis_Host_ergodic_averages_distinct_degree:2011}, it was proved for $\ell=1$ that, after choosing a suitable tuple from a nice family, the van der Corput operation produces another nice family of strictly smaller type. However, even if the original family is very nice, the resulting family need not be very nice with the order given by the van der Corput operation.

For example, consider the very nice family\begin{align*}
    \cP=\big( (n^3,n^2,n),(0,n^2,n),(0,n^2+n,n) \big).
\end{align*}

The family $(0,n^2,n,h)\vdc \cP$ is \begin{align*}
    \big( (n^3+3hn^2+3h^2n,2hn,0),(0,2hn,0),(0,(2h+1)n,0),(n^3,0,0),(0,n,0) \big).
\end{align*}

This family is not very nice. However, if we first place the tuples with nonzero first coordinate and then order the remaining tuples by decreasing degree of their second coordinate, we obtain\begin{align*}
    \big( (n^3+3hn^2+3h^2n,2hn,0),(n^3,0,0),(0,2hn,0),(0,(2h+1)n,0),(0,n,0) \big),
\end{align*}

which is very nice.

The following lemma shows that such a reordering is possible for every very nice family.

\begin{lemma}\label{lemma: reduce_type}
Let $(\cP_1,\ldots, \cP_d)$ be a family with $\deg((\cP_1,\dots,\cP_d))\geq 2$. 

Then there exists $(p_1,\ldots,p_d)\in (\cP_1,\ldots,\cP_d)$ such that for every $\ell\in\mathbb N$ and every large enough $h_1,\dots,h_\ell\in\N$, the family  $(p_1,\ldots,p_d,h_1,\dots,h_\ell)\vdc(\cP_1,\ldots, \cP_d)$ has strictly smaller type than $(\cP_1,\ldots, \cP_d)$. Moreover, if $(\cP_1,\ldots,\cP_d)$ is very nice, then $(p_1,\ldots,p_d,h_1,\dots,h_\ell)\vdc(\cP_1,\ldots, \cP_d)$ can be reordered so as to be very nice.
\end{lemma}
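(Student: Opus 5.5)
This lemma is the $\ell$-fold analogue of \cite[Lemma 5.4]{Chu_Frantzikinakis_Host_ergodic_averages_distinct_degree:2011}, plus a bookkeeping statement about reorderings. I would prove the type reduction by the standard PET choice of the tuple to subtract, and then check the reordering claim by hand.

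\textbf{Step 1: choice of the tuple.} Let $i_0$ be the smallest index with $\widehat{\cP}_{i_0}\neq\emptyset$. Among the tuples whose $i_0$-th coordinate lies in $\widehat{\cP}_{i_0}$ (so all earlier coordinates are constant, hence zero by $p(0)=0$), choose $(p_1,\dots,p_d)$ with $p_{i_0}$ of \emph{minimal} degree in $\widehat{\cP}_{i_0}$. When the family is nice and of degree at least $2$, one can take $i_0=1$. For the "moreover" part I would instead take $(p_1,\dots,p_d)$ to be the \emph{last} tuple of the ordered family. In a very nice family this tuple has, in the first nonzero row, minimal degree, and it is this choice that CFH use.

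\textbf{Step 2: type reduction.} The rows $i<i_0$ stay zero after subtracting $p_i$ and differencing. In row $i_0$, for each $q\in\widehat{\cP}_{i_0}$ the polynomials $\partial_{h}q-p_{i_0}$ (with $h$ running over the partial sums $h_j+\dots+h_\ell$, or $0$) behave as follows:
\begin{itemize}
\item if $\deg q>\deg p_{i_0}$, they are all equivalent to $q$, so the class count does not increase in degree $\deg q$;
\item if $q\sim p_{i_0}$, they drop to degree less than $\deg p_{i_0}$;
\item if $\deg q=\deg p_{i_0}$ but $q\not\sim p_{i_0}$, they stay in degree $\deg p_{i_0}$ with a new leading coefficient, but remain in a single class;
\item $p_{i_0}$ itself becomes $\partial_h p_{i_0}-p_{i_0}$, which is of lower degree or zero.
\end{itemize}
So $w_{i_0,\deg p_{i_0}}$ strictly decreases and all higher entries are unchanged. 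Tuples whose $i_0$-coordinate becomes constant migrate to later rows, but the lexicographic order only sees row $i_0$ first, so the type decreases.

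The phrase "large enough $h$" is needed so that no accidental coincidences occur. These are:
\begin{itemize}
\item a lower-degree polynomial $\partial_h q-p_{i_0}$ vanishing;
\item two tuples becoming equal, which would change the bookkeeping in a way I would want to exclude for the niceness conditions.
\end{itemize}
Each coincidence is a polynomial identity in $(h_1,\dots,h_\ell)$ that holds for only finitely many values in each variable once the others are fixed. Here I would take "large enough" to mean outside a finite union of hyperplane-type exceptional sets, in the iterated sense used later in the PET induction.

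\textbf{Step 3: the reordering, which is the main obstacle.} I would order the new tuples in three blocks:
\begin{itemize}
\item first, those with nonzero first coordinate, beginning with $\partial_{h_1+\dots+h_\ell}p_{1,1}-p_1$, the tuple coming from the leading tuple with the largest shift;
\item then, those with zero first coordinate, sorted by decreasing degree of the second coordinate;
\item recursively in the same way within later coordinates.
\end{itemize}

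To verify niceness, condition (1) is immediate. Condition (2) holds because the first coordinate keeps degree $\deg p_{1,1}$ while the other coordinates do not grow in degree. For condition (3), I need
\[
\deg\bigl(\partial_a p_{1,1}-\partial_b p_{1,j}\bigr)>\deg\bigl(\partial_a p_{i,1}-\partial_b p_{i,j}\bigr).
\]
I would split by whether $j=1$ and $a\neq b$, using $\deg\partial_a p-\partial_b p=\deg p-1$ generically, together with the original conditions (2) and (3).

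The recursive part is where I expect the real difficulty. I must show that the tail families $\cP^{[r]}$ remain nice, and that the block with first coordinate zero is a very nice family of $(d-1)$-tuples. I would prove this by induction on $d$: the zero-first-coordinate block arises from the original tail (from $j_0$ on) after subtracting $(p_2,\dots,p_d)$, possibly together with the differenced $p$ itself. Applying the lemma inductively to the $(d-1)$-family gives its reorderability. Throughout, I would check that interleaving the extra tuples $\partial_h\mathbf p-\mathbf p$ of lower degree at the end respects all niceness inequalities.
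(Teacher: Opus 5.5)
\textbf{Gap in Steps 1--2: your choice of tuple does not reduce the type.} You take $i_0$ \emph{smallest} with $\widehat{\cP}_{i_0}\neq\emptyset$, which for a nice family always means $i_0=1$. Your row-$i_0$ analysis only tracks polynomials already in $\widehat{\cP}_{i_0}$. It ignores the tuples whose $i_0$-th coordinate is $0$, i.e.\ tuples lying at a deeper level. After subtracting $(p_1,\dots,p_d)$, their $i_0$-th coordinate becomes $-p_{i_0}$, so they enter $\widehat{\cP}_{i_0}$. If $c_0$ is the leading coefficient of $p_{i_0}$, they contribute a class of degree $\deg p_{i_0}$ with leading coefficient $-c_0$. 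This class differs from all shifted old classes $c-c_0$, since $c\neq 0$. It therefore exactly offsets the class of $p_{i_0}$ that you remove, so $w_{i_0,\deg p_{i_0}}$ does not drop, and lower-degree entries can grow.

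The paper's own very nice example shows this. For $\cP=((n^3,n^2,n),(0,n^2,n),(0,n^2+n,n))$ your choice is forced to be $(n^3,n^2,n)$. Then $(n^3,n^2,n,h)\vdc\cP$ contains $(3hn^2+3h^2n,2hn,0)$ and $(-n^3,0,0)$, so the first row of the type goes from $(1,0,0)$ to $(1,1,0)$: the type increases. The missing idea is to take $i$ \emph{maximal} with $\widehat{\cP}_i\neq\emptyset$ and $p_i$ of minimal degree in $\widehat{\cP}_i$. Then no tuple sits at a deeper level. Rows $<i$ keep their classes, because $p_1=\dots=p_{i-1}=0$ and $\partial_h q\sim q$, and row $i$ strictly decreases. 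When only $\widehat{\cP}_1\neq\emptyset$, the paper (following Chu--Frantzikinakis--Host) takes the tuple of $p_{1,1}$ if all first-row polynomials are equivalent to $p_{1,1}$. Otherwise it takes a tuple whose first coordinate has minimal degree among those not equivalent to $p_{1,1}$.

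\textbf{Second problem: you use two different tuples.} The statement requires one tuple that serves both assertions, and the PET induction uses the same family $(p_1,\dots,p_d,h_1,\dots,h_\ell)\vdc\cP$ for both. You switch to the last tuple for the ``moreover'' part, and your Step 2 analysis does not cover that tuple. The last tuple is also not the paper's choice, and with your ordering it can fail. For the very nice family $((n^3,n),(2n^3,0),(n^3+n^2,0))$ the last tuple is $(n^3+n^2,0)$. Then $\partial_h n^3-(n^3+n^2)=(3h-1)n^2+3h^2n$ has degree $2$, while $\partial_h(2n^3)-(n^3+n^2)$ has degree $3$. Your ordering puts $\partial_{h_1+\dots+h_\ell}p_{1,1}-p_1$ first, so it violates condition (1) of niceness, and your claim in Step 3 that the first coordinate keeps degree $\deg p_{1,1}$ is false.

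With the correct choice, the reordering step can be carried out, but not quite as you sketch. The paper does not induct on $d$. Instead it proves directly two facts about the new family. First, for any two distinct tuples, the first coordinate where they differ has strictly larger difference-degree than every later coordinate. Second, the first nonzero coordinate of a tuple dominates the degrees of its later coordinates. It then orders tuples by level, and within each level by non-increasing degree of the level coordinate, and checks that every tail is nice.
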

\begin{proof}
We choose $(p_1,\dots,p_d)$ as follows. If $\widehat{\cP}_i\neq\emptyset$ for some $i\geq 2$, let $i$ be maximal with this property and choose $(p_1,\dots,p_d)$ such that $p_1=\cdots=p_{i-1}=0$ and $p_i$ has minimal degree in $\widehat{\cP}_i$. Then, for every $h_1,\dots,h_\ell\in\mathbb N$, the first $i-1$ rows of the type matrix are unchanged, and the $i$-th row will get reduced. Hence, the resulting family has strictly smaller type.

Suppose now that the families $\widehat{\cP}_2,\dots,\widehat{\cP}_d$ are empty. If $\widehat{\cP}_1=\{p_{1,1}\}$, or if every polynomial in $\cP_1$ is equivalent to $p_{1,1}$, we choose $(p_1,\dots,p_d)=(p_{1,1},\dots,p_{d,1})$. Otherwise, we choose a tuple whose first coordinate has minimal degree among the polynomials in $\cP_1$ that are not equivalent to $p_{1,1}$. In each case, the first row of the type matrix of $(p_1,\ldots,p_d,h_1,\dots,h_\ell)\vdc(\cP_1,\ldots, \cP_d)$ is smaller than that of $(\cP_1,\dots,\cP_d)$. 

Suppose now that $(\cP_1,\dots,\cP_d)$ is nice. We show that $(p_1,\ldots,p_d,h_1,\dots,h_\ell)\vdc(\cP_1,\ldots, \cP_d)$ is also nice. In the proof of {\cite[Lemma 5.4]{Chu_Frantzikinakis_Host_ergodic_averages_distinct_degree:2011}}, it was shown that the choice of $(p_1,\dots,p_d)$ guarantees that, for all but finitely many $h\in\mathbb N$,\begin{align*}
\deg(\partial_h p_{1,1}-p_1)
&\geq \max\{\deg(p_{1,m}-p_1),\deg(\partial_h p_{1,m}-p_1)\},
&&m=2,\dots,s;\\
\deg(\partial_h p_{1,1}-p_1)
&> \max\{\deg(p_{i,m}-p_i),\deg(\partial_h p_{i,m}-p_i)\},
&&i=2,\dots,d,\quad m=2,\dots,s;\\
\deg(\partial_h p_{1,1}-\partial_h p_{1,m})
&>\deg(\partial_h p_{i,1}-\partial_h p_{i,m}),
&&i=2,\dots,d,\quad m=2,\dots,s;\\
\deg(\partial_h p_{1,1}-p_{1,m})
&>\deg(\partial_h p_{i,1}-p_{i,m}),
&&i=2,\dots,d,\quad m=2,\dots,s.
\end{align*}

Therefore, to verify that $(p_1,\ldots,p_d,h_1,\dots,h_\ell)\vdc(\cP_1,\ldots, \cP_d)$ is nice it is enough to show that, for all sufficiently large $h_{1},\dots,h_{\ell}$, \begin{align*}
    \deg(\partial_{h_j+\dots+h_\ell} p_{1,1} - \partial_{h_{j'}+\dots+h_\ell} p_{1,m}) > \deg(\partial_{h_j+\dots+h_\ell} p_{i,1} - \partial_{h_{j'}+\dots+h_\ell} p_{i,m})
\end{align*}

for $2\leq i \leq d$, $1\leq m\leq s$ and $1\leq j<j'\leq \ell$. 

If $p_{1,1}\not\sim p_{1,m}$, we have \begin{align*}
            \deg(\partial_{h_j+\dots+h_{\ell}} p_{1,1} - \partial_{h_{j'}+\dots+h_{\ell}}p_{1,m}) = \deg(p_{1,1}).
        \end{align*}

        Since $\deg(p_{i,m}),\deg(p_{i,1})<\deg(p_{1,1})$, it follows that\begin{align*}
            \deg(\partial_{h_j+\dots+h_{\ell}} p_{i,1} - \partial_{h_{j'}+\dots+h_{\ell}}p_{i,m}) < \deg(\partial_{h_j+\dots+h_{\ell}} p_{1,1} - \partial_{h_{j'}+\dots+h_{\ell}}p_{1,m}).
        \end{align*}
        
        Suppose now that $p_{1,1}\sim p_{1,m}$. For every sufficiently large $h_j,h_{j+1},\dots,h_{\ell-1}$, we have \begin{align*}
            \deg(\partial_{h_j+\dots+h_{\ell}}p_{1,1} - \partial_{h_{j'}+\dots+h_{\ell}}p_{1,m}) = \deg(p_{1,1})-1. 
        \end{align*}

        Moreover,\begin{align*}
            \partial_{h_j+\dots+h_{\ell}} p_{i,1} - \partial_{h_{j'}+\dots+h_{\ell}}p_{i,m} =(\partial_{h_j+\dots+h_{\ell}} p_{i,1} -p_{i,1})+(p_{i,1}-p_{i,m}) - (\partial_{h_{j'}+\dots+h_{\ell}}p_{i,m} - p_{i,m})
        \end{align*}

        The first and third terms on the right-hand side have degrees at most $\deg(p_{i,1})-1$ and $\deg(p_{i,m})-1$, and hence strictly smaller than $\deg(p_{1,1})-1$. Furthermore, since $(\cP_1,\dots,\cP_d)$ is nice, it follows that the second term on the right side has degree strictly smaller than $\deg(p_{1,1}-p_{1,m})$. Thus,\begin{align*}
    \deg(\partial_{h_j+\dots+h_\ell} p_{1,1} - \partial_{h_{j'}+\dots+h_\ell} p_{1,m}) > \deg(\partial_{h_j+\dots+h_\ell} p_{i,1} - \partial_{h_{j'}+\dots+h_\ell} p_{i,m}).
\end{align*}

Thus the resulting family is nice. Removing constant and repeated tuples does not affect these properties.

Suppose that $(\cP_1,\dots,\cP_d)$ is very nice. We show that the tuples of $(p_1,\ldots,p_d,h_1,\dots,h_\ell) \vdc (\cP_1,\ldots, \cP_d)$ can be reordered so that the resulting family is very nice.

We first note two inequalities that follow from the definition of a very nice family that will be used. If $m\neq m'$ and $i$ is the smallest integer such that $p_{i,m}\neq p_{i,m'}$, then\begin{align*} \deg(p_{i,m}-p_{i,m'})> \deg(p_{k,m}-p_{k,m'}) \end{align*} for every $k>i$. Similarly, if $i$ is the first nonzero coordinate of $(p_{1,m},\dots,p_{d,m})$, then $\deg(p_{i,m})>\deg(p_{k,m})$ for every $k>i$.

For $1\leq i\leq d$, define\begin{align*}
    \cP_i' = \{(q_1,\dots,q_d)\in(p_1,\ldots,p_d,h_1,\dots,h_\ell)\vdc(\cP_1,\ldots, \cP_d): q_1 = \dots=q_{i-1}=0, q_i\neq 0 \}.
\end{align*}

Order the tuples in each $\cP_i'$ so that the degrees of their $i$-th coordinates are non-increasing, and then order $(p_1,\ldots,p_d,h_1,\dots,h_\ell)\vdc(\cP_1,\ldots, \cP_d)$ as $\cP_1',\cP_2',\dots,\cP_d'$. Denote the resulting family by $\cP'$, and write\begin{align*}
    \cP' = \big( (q_{1,1},\dots,q_{d,1}),\dots, (q_{1,t},\dots,q_{d,t}) \big).
\end{align*}

Let $(q_1,\dots,q_d)\in\cP_i'$, and suppose that\begin{align*}
    (q_1,\dots,q_d) =  (q_{1,m},\dots,q_{d,m})
\end{align*}

for some $1\leq m \leq t$. We will show that the family\begin{align*}
    \big( (q_{i,m},\dots,q_{d,m}),\dots,(q_{i,t},\dots,q_{d,t}) \big)
\end{align*}

is nice.

We first claim that, for all sufficiently large $h_1,\ldots,h_\ell$, if $(q_1,\dots,q_d),(q_1',\dots,q_d')$ are distinct $d$-tuples in $(p_1,\ldots,p_d,h_1,\dots,h_\ell)\vdc(\cP_1,\ldots, \cP_d)$ and $i$ is the smallest integer such that $q_i \neq q_i'$, then $\deg(q_i-q_i')>\deg(q_k-q_k')$ for every $k>i$. 

Suppose first that there exist $1\leq m,m' \leq s$ and $1\leq j,j' \leq \ell$ such that\begin{align*}
    q_i - q_i' = \partial_{h_j+\dots+h_{\ell}}p_{i,m} - \partial_{h_{j'}+\dots+h_{\ell}}p_{i,m'}.
\end{align*}

%for every $1\leq i \leq d$.

If $p_{i,m}\not\sim p_{i,m'}$, then\begin{align*}
    \deg(\partial_{h_{j}+\dots+h_{\ell}}p_{i,m} -\partial_{h_{j'}+\dots+h_{\ell}}p_{i,m'}) = \max\{\deg(p_{i,m}),\deg(p_{i,m'})\},
\end{align*}

and the claim follows from the defining properties of a very nice family.

Suppose that $p_{i,m}\sim p_{i,m'}$. If $j\neq j'$, then, for all sufficiently large $h_1,\ldots,h_\ell$,\begin{align*}
    \deg(\partial_{h_{j}+\dots+h_\ell}p_{i,m} -\partial_{h_{j'}+\dots+h_\ell}p_{i,m'}) = \deg(p_{i,m})-1.
\end{align*}

For $k>i$, we have\begin{align*}
    \partial_{h_{j}+\dots+h_{\ell}}p_{k,m}-\partial_{h_{j'}+\dots+h_{\ell}}p_{k,m'} =(\partial_{h_{j}+\dots+h_{\ell}}p_{k,m}-p_{k,m})+(p_{k,m}-p_{k,m'}) -(\partial_{h_{j'}+\dots+h_{\ell}}p_{k,m'}-p_{k,m'}).
\end{align*}

The first and third terms on the right-hand side have degree strictly smaller than $\deg(p_{i,m})-1$. Moreover, since $\cP$ is very nice, it follows that \begin{align*}
    \deg(p_{k,m}-p_{k,m'}) < \deg(p_{i,m}-p_{i,m'})\leq \deg(p_{i,m})-1.
\end{align*}

Thus the claim follows in this case. The case $j=j'$ follows directly from the defining inequalities of a very nice family. The case $q_i-q_i' = p_{i,m}-p_{i,m'}$ also follows directly from the defining inequalities of a very nice family.

Finally, suppose now that there exist $1\leq m,m' \leq s$ and $1\leq j \leq \ell$ such that\begin{align*}
    q_i - q_i' = \pm p_{i,m} \mp \partial_{h_j+\dots+h_{\ell}}p_{i,m'}
\end{align*}

Then the same argument as above proves the claim.

The same argument applied to $\partial_{h_j+\dots+h_\ell} (p_{1,m},\dots,p_{d,m}) - (p_1,\dots,p_d)$ shows that, if $i$ is the smallest integer such that $q_i \neq 0$, then $\deg(q_i)>\deg(q_k)$ for every $k>i$. Moreover, the same argument shows that, for sufficiently large $h_1,\ldots,h_\ell$, distinct tuples in $\cP_i'$ have distinct $i$-th coordinates.

Let $(q_1',\dots,q_d')$ be any tuple that follows $(q_1,\dots,q_d)$ in $\cP'$. Suppose first that $(q_1',\dots,q_d')\in\cP_i'$. By the choice of the ordering, $\deg(q_i)\geq\deg(q_i')$. Moreover, since $i$ is the smallest integer such that $q_i'\neq 0$, we have $\deg(q_i')>\deg(q_k')$ for every $k>i$. Thus, $\deg(q_i)>\deg(q_k')$ for every $k>i$. Furthermore, since $i$ is the smallest integer such that $q_i\neq q_i'$, it follows from the claim that \begin{align*}
    \deg(q_i-q_i')>\deg(q_k-q_k')
\end{align*}

for every $k>i$.

Suppose now that $(q_1',\dots,q_d')\in\cP_j'$ for some $j>i$. Then $q_i'=0$, and $i$ is the smallest integer such that $q_i'\neq q_i$. Therefore, it follows from the claim that \begin{align*}
    \deg(q_i) = \deg(q_i-q_i')>\deg(q_k-q_k')
\end{align*}

for every $k>i$. Thus, since $\deg(q_i)>\deg(q_k)$ for any $k>i$, it follows that  \begin{align*}
    \deg(q_i)>\max\{\deg(q_k),\deg(q_k-q_k')\}
\end{align*}
for every $k>i$. 

For $k>i$, we have $q_k' = (q_k' - q_k) +q_k$. Hence, $\deg(q_i)>\deg(q_k')$ for every $k>i$. 

Finally, in both cases, we have $\deg(q_i)>\deg(q_k')$ and $\deg(q_i-q_i')>\deg(q_k-q_k')$ for every $k>i$. Therefore, the family\begin{align*}
    \big( (q_{i,m},\dots,q_{d,m}),\dots,(q_{i,t},\dots,q_{d,t}) \big)
\end{align*} is nice. 

Applying the same argument successively to $\cP_2'\cup\dots\cup\cP_d',
\cP_3'\cup\dots\cup\cP_d',\dots,\cP_d'$ shows that the reordered family $\cP'$ is very nice. 
\end{proof}

\subsection{A stronger result}

Throughout this subsection, let $\pi\colon(X,S_{1},\dots,S_{k})\to (Y,S_{1},\dots,S_k)$ be an open factor between minimal $\Z^{k}$-systems and  let $T_{1},\dots,T_{d}\in \langle S_{1},\dots,S_{k} \rangle$ be transitive transformations. We assume that $Y$ is an almost one-to-one extension of $X_\infty$.

Let $\cP$ be a very nice family of $d$-tuples of polynomials, written as\begin{align*}
    \cP=(\cP_1,\cP_2,\dots,\cP_{d}) = \big((p_{1,1},p_{2,1},\dots,p_{d,1}),(p_{1,2},p_{2,2},\dots,p_{d,2}),\dots,(p_{1,s},p_{2,s},\dots,p_{d,s})\big)
\end{align*}  

and let $\cC$ be a family of $d$-tuples of polynomials. We say that $\pi$ has the property $\Lambda(\cP,\cC)$ if for any open subsets $V_{0},V_{1},\dots,V_{s}$ of $X$ with $\cap_{i=0}^{s} \pi(V_{i})\neq \emptyset$, there exist infinitely many $n\in\N$ for which there exists $z\in V_{0}$ such that\begin{enumerate}
    \item $T_{1}^{p_{1,i}(n)}T_2^{p_{2,i}(n)}\cdots T_{d}^{p_{d,i}(n)} z\in V_{i}$ for $1\leq i \leq s$;
    \item $T_1^{q_1(n)}T_2^{q_2(n)}\cdots T_{d}^{q_d(n)}\pi(z)\in \cap_{i=0}^{s} \pi(V_{i})$ for $(q_1,q_2,\dots,q_d)\in \cC$.
\end{enumerate}

To prove \cref{thm: TFC_nice_pol}, it is enough to prove the following result.\begin{theorem}
    Let $\cC$ be a family of $d$-tuples of polynomials and let $\cP$ be a very nice family of $d$-tuples of polynomials. Then $\pi$ has the property $\Lambda(\cP,\cC)$.
\end{theorem}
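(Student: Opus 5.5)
We argue by PET induction on the type of $\cP$, following the scheme of \cite{Qiu_poly_orbits_tot_minimal:2023} and \cite{Chu_Frantzikinakis_Host_ergodic_averages_distinct_degree:2011}. We prove the statement for all families $\cC$ simultaneously, by induction on the type $W$ of the very nice family $\cP$, working inside the set of families of degree at most a fixed $D$. Since the lexicographic order on types is a well-order, the induction is legitimate.

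\textbf{Base case.} The base case is $\deg(\cP)=1$. Here every coordinate polynomial is linear, say $p_{i,j}(n)=c_{i,j}n$. By niceness, $p_{1,1}$ has degree one and all $p_{i,j}$ with $i\ge 2$ have degree zero, hence vanish (they have zero constant term). So, until a first coordinate vanishes, the tuples involve only $T_1$, with distinct coefficients $c_{1,j}$. Niceness forces $c_{1,1}\neq c_{1,j}$ for $j\ge2$. If some $p_{1,j_0}=0$, very niceness forces $p_{1,k}=0$ for $k\ge j_0$ together with a nice condition for the tail. The second case would involve other $T_i$, contradicting niceness (2) unless the tail is empty. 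Thus the linear case reduces to distinct nonzero multiples $c_in$ of $T_1$, and this is exactly \cref{lemma: linear stronger}. Degree-zero rows are trivial.

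\textbf{Inductive step.} Let $\deg(\cP)\ge 2$, and choose $(p_1,\dots,p_d)\in\cP$ by \cref{lemma: reduce_type}. Given $V_0,\dots,V_s$ with $W:=\bigcap_i\pi(V_i)\neq\emptyset$, we proceed as in Qiu's topological vdC argument, building a sequence inductively. Using minimality of $X$, fix finitely many $g_1,\dots,g_\ell\in\langle S_1,\dots,S_k\rangle$ with $\bigcup_j g_j^{-1}V_0 = X$, so that every point visits $V_0$ under one of them. We then apply the induction hypothesis to the reduced family
\[
\cP'=(p_1,\dots,p_d,h_1,\dots,h_\ell)\vdc\cP ,
\]
reordered to be very nice by \cref{lemma: reduce_type}. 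We choose the $h_j$ successively and large, as the lemma allows. The open sets we feed in are shrunken versions of $T^{-\cdots}V_i$ intersected along a chain, with $\cC$ enlarged by the tuples $\partial_h(\cdot)$ and their shifts. This keeps the $\pi$-images of all the relevant sets containing a common nonempty open set of $Y$; that is the role of condition (2) in $\Lambda$.

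The pigeonhole over the $\ell+1$ translates $h_j+\dots+h_\ell$ produces $j<j'$ and a point $z$ in the same piece. Translating back by $T_1^{p_1(m)}\cdots T_d^{p_d(m)}$, with $m=n+h_{j'}+\dots+h_\ell$ and $h=h_j+\dots+h_{j'-1}$, yields $z'\in V_0$ with
\[
T_1^{p_{1,i}(h)}\cdots T_d^{p_{d,i}(h)}z'\in V_i .
\]
The $\cC$-condition is transported as well, because $\pi$ intertwines the actions, and infinitely many such $h$ arise as the $h_j$ vary.

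\textbf{Main obstacle.} The delicate step is the bookkeeping that keeps $\bigcap\pi(\cdot)\neq\emptyset$ through each stage. This uses openness of $\pi$, and uses $\cC$ to force the factor orbit $T^{q(n)}\pi(z)$ back into $W$, so that the next stage has a valid input. I would copy Qiu's chain construction verbatim, replacing $T^{p(n)}$ by $T_1^{p_1(n)}\cdots T_d^{p_d(n)}$, which works because the $T_i$ commute. Finally, \cref{thm: TFC_nice_pol} follows by taking $\cC=\emptyset$.
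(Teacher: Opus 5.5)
Your overall framework is right: PET induction on the type, the base case $\deg(\cP)=1$ via \cref{lemma: linear stronger}, vdC reduction via \cref{lemma: reduce_type}, and a chain over a finite family of translates. But the inductive step has a genuine gap. A single uniform vdC chain breaks down as soon as $\deg(\cP)\ge 2$ and $\cP$ also contains tuples of degree $1$.

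Write $T^{\mathbf q(n)}$ for $T_1^{q_1(n)}\cdots T_d^{q_d(n)}$. For a linear tuple $\mathbf b$ one has $\partial_H\mathbf b=\mathbf b$. So in $(p_1,\dots,p_d,h_1,\dots,h_\ell)\vdc\cP$ every tuple $\partial_{H_j}\mathbf b-\mathbf p$ (with $H_j=h_j+\dots+h_\ell$) coincides with $\mathbf b-\mathbf p$ and is merged by ${}^*$. If the chosen $\mathbf p$ is itself linear, $\partial_H\mathbf p-\mathbf p$ is constant and disappears as well. The induction hypothesis then yields only one condition on $T^{\mathbf b(n_\ell)-\mathbf p(n_\ell)}y_\ell$. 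The chain, however, needs this single point to lie both in the small ball $B(x_{\ell-1},\eta_\ell)$, to preserve the earlier returns at times $n_j+\dots+n_{\ell-1}$, and in the new target $S_{a_\ell}V_{\mathbf b}$. These two sets need not meet.

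Already $d=1$, $\cP=((n^2),(n))$ shows this. \cref{lemma: reduce_type} selects $\mathbf p=n$, which forces $y_\ell\in B(x_{\ell-1},\eta_\ell)\cap S_{a_\ell}V_2$. Choosing $\mathbf p=n^2$ instead does not lower the type and produces the same clash for $n-n^2$.

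This is why the paper runs the chain only in Case~1, where every tuple has degree $\ge 2$, and needs a separate Case~2:
\begin{enumerate}
\item \cref{claim 1} is proved by a second induction on the type of the linear part $\cB$. It subtracts a linear tuple $\mathbf b$, which leaves the type $W$ of the nonlinear part unchanged, and only sends the linear tuples back into $U_0$. The chain is closed by the pigeonhole $k_j=k_{\ell+1}$.
\item \cref{claim 2} first places the linear tuples in their own sets at some time $k$, using $\Lambda(\cB,\cC\cup\cP)$ from \cref{lemma: linear stronger}. It then applies \cref{claim 1} to the $\partial_k$-shifted nonlinear family.
\item Case~2 combines the two claims.
\end{enumerate}
Your ``main obstacle'' paragraph addresses only the $\pi$-bookkeeping and misses this issue.

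Your translate-back step is also incorrect as written. For nonlinear $p_i$, $p_i(m+h)\neq p_i(m)+p_i(h)$. So from conditions on $T^{\mathbf p_i(n+H_j)}$, for a single output $n$ of the induction hypothesis, you cannot extract $T^{\mathbf p_i(h)}z'\in V_i$ with $h=h_j+\dots+h_{j'-1}$.

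In the paper the induction hypothesis is applied once per stage $\ell$, with the previously obtained return times $n_1,\dots,n_{\ell-1}$ as the vdC parameters. The exact identity $p(a+b)=p(a)+(\partial_a p)(b)$ then propagates returns at the times $n_j+\dots+n_\ell$. The point $x_\ell=T^{-\mathbf p(n_\ell)}y_\ell$ is controlled only through $\pi$, via the tuple $(-p_1,\dots,-p_d)\in\cC'$. The final point is placed in some $S_{a_j}V_0$ using $\pi^{-1}(B(\pi(x),\delta))\subseteq\bigcap_m\bigcup_j S_{a_j}V_m$ together with $B(\pi(x),\delta)\subseteq\bigcap_j S_{a_j}W_0$. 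This covering is adapted to $\pi$ and comes from its openness; a covering of $X$ by translates of $V_0$ alone would not suffice.
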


\begin{proof}
    Write\begin{align*}
        \cP=(\cP_1,\dots,\cP_d) &= \big((p_{1,1},\dots,p_{d,1}),\dots,(p_{1,s},\dots,p_{d,s})\big),\\
        \cC=(\cC_1,\dots,\cC_d) &= \big((q_{1,1},\dots,q_{d,1}),\dots,(q_{1,s'},\dots,q_{d,s'})\big).
    \end{align*}
    
    Suppose first that $\deg(p_{1,1})=1$. Since $\cP$ is nice, $\deg(p_{1,m})\leq 1$ and $p_{i,m}=0$ for $1\leq m \leq s$ and $2\leq i \leq d$. The desired conclusion then follows from \cref{lemma: linear stronger}.

    We may therefore assume that $\deg(p_{1,1})\geq 2$. Let $W$ be the matrix type of $\cP$. Assume that $\pi$ has the property $\Lambda(\cP',\cC')$ for any very nice family of $d$-tuples of polynomials $\cP'$ and any family of $d$-tuples of polynomials $\cC'$ whenever $\cP'$ has strictly smaller type than $W$.

    Let $V_{0},V_{1},\dots,V_{s}$ be open subsets of $X$ with $W_0= \cap_{m=0}^{s} \pi(V_{m})\neq \emptyset$. Replacing each $V_{m}$ by $V_{m}\cap \pi^{-1}(W_0)$, we may assume that $\pi(V_m)=W_0$ for $0\leq m \leq s$.

    Since $(X,S_{1},\dots,S_k)$ is minimal, there exists a finite set $F\subseteq \Z^{k}$ such that $X=\cup_{r\in F}S_{r} V_{m}$ for every $0\leq m\leq s$. Let $x\in X$ with $\pi(x)\in W_0$ and let \begin{align*}
        \{r\in F: \pi(x)\in S_{r}W_0\} = \{a_{1},\dots,a_{N}\}.
    \end{align*}
    
    Then we have $\pi^{-1}(\pi(x)) \subseteq \cap_{m=0}^{s}\cup_{j=1}^{N}S_{a_{j}}V_{m}$.
    
    Since $\pi$ is open, there is a $\delta>0$ such that $\pi^{-1}(B(\pi(x),\delta)) \subseteq \cap_{m=0}^{s}\cup_{j=1}^{N}S_{a_{j}}V_{m}$ and $B(\pi(x),\delta)\subseteq W_0\cap \left(\cap_{j=1}^{N} S_{a_{j}}W_0\right)$.

    The proof is divided into two cases.

\subsubsection*{Case 1: $\deg((p_{1,m},\dots,p_{d,m}))\geq 2$ for every $m=1,\dots,s$.}

Let $\eta=\delta/N$. Inductively we will construct infinitely many $n_1,\dots,n_{N}\in \N$ for which there exist $x_{1},\dots,x_{N}\in X$ such that for $1\leq j \leq \ell\leq N$,\begin{itemize}
    \item $\pi(x_{\ell})\in B(\pi(x),\ell\eta)$;
    \item $T_1^{p_{1,m}(n_j+\dots+n_\ell)}\cdots T_d^{p_{d,m}(n_j+\dots+n_\ell)} x_{\ell}\in S_{a_{j}}V_{m}$ for $1\leq m\leq s$;
    \item $ T_1^{q_1(n_{j}+\dots+n_{\ell})}\cdots T_d^{q_d(n_{j}+\dots+n_{\ell})} \pi(x_{\ell})\in B(\pi(x),\ell\eta)$ for $(q_1,\dots,q_d)\in \cC$.
\end{itemize}

Once this construction is completed, the desired result follows.

\textbf{Step 1:} Let $I_{1} = \pi^{-1} (B(\pi(x),\eta))$. Then $\pi(x) \in M_{1} \subseteq B(\pi(x),\eta)$, where $M_{1} = \cap_{m=1}^{s} \pi(I_{1}\cap S_{a_1}V_{m})$.

If $s=1$, by {\cite[Theorem C]{Bergelson_Leibman96}}, there exist infinitely many $n_1$ for which there exists $x_{1}\in I_{1}\cap S_{a_1}V_{1}$ such that, for each $(q_1,\dots,q_d)\in \cC$, $T_1^{p_{1,1}(n_1)}\cdots T_{d}^{p_{d,1}(n_1)}x_1,T_1^{q_1(n_1)}\cdots T_d^{q_d(n_1)}x_1\in I_{1}\cap S_{a_1}V_1$. In particular, $\pi(x_1),T_1^{q_1(n_1)}\cdots T_d^{q_d(n_1)}\pi(x_1)\in B(\pi(x),\eta)$. This proves the first step in the case $s=1$.

Assume now that $s\geq 2$. By \cref{lemma: reduce_type}, there exists $(p_1,\dots,p_d)\in \cP$ such that, for all large enough $h$, $(p_1,\dots,p_d,h)\vdc \cP$ has strictly smaller type than $\cP$. Suppose that $(p_1,\dots,p_d,h)\vdc \cP$ is very nice. Without loss of generality, we may assume that $(p_1,\dots,p_d)=(p_{1,1},\dots,p_{d,1})$. Let \begin{align*}
    \cC'=\big((-p_1,\dots,-p_d),(q_{1,1}-p_1,\dots,q_{d,1}-p_d),\dots,(q_{1,s'}-p_1,\dots,q_{d,s'}-p_d)\big).
\end{align*}

 Then $\pi$ has the property $\Lambda((p_1,\dots,p_d,h)\vdc \cP,\cC')$. Therefore, for the open sets $I_{1}\cap S_{a_{1}}V_{1}$ and  $\underbrace{X,\dots, X}_{s\text{ times}},I_{1}\cap S_{a_1}V_2,\dots,I_1 \cap S_{a_1} V_s$, there exist infinitely many $n_1\in \N$ for which there exists $y_1\in I_{1}\cap S_{a_1}V_1$ such that\begin{itemize}
    \item $T_1^{p_{1,m}(n_1)-p_1(n_1)}\cdots T_{d}^{p_{d,m}(n_1)-p_d(n_1)}y_1 \in I_{1}\cap S_{a_1}V_{m}$ for $2\leq m \leq s$.
    \item $T_1^{-p_1(n_1)}\cdots T_{d}^{-p_d(n_1)}\pi(y_1)$, $T_1^{q_1(n_1)-p_1(n_1)}\cdots T_d^{q_d(n_1)-p_d(n_1)}\pi(y_1)\in M_{1}$ for $(q_1,\dots,q_d)\in \cC$.
\end{itemize} 

Setting $x_{1} = T_1^{-p_1(n_1)}\cdots T_{d}^{-p_d(n_1)}y_1$, we conclude the desired result. 

If $(p_1,\dots,p_d,h)\vdc \cP$ is not very nice, then by \cref{lemma: reduce_type}, it can be reordered to form a very nice family. The same argument then applies after reordering the corresponding open sets in the same way.

\textbf{Step $\ell$:} Let $\ell\geq 2$ be an integer and assume that we have already chosen infinitely many $n_1,\dots,n_{\ell-1}$ for which there exist $x_1,\dots,x_{\ell-1}\in X$ such that\begin{itemize}
    \item $\pi(x_{\ell-1})\in B(\pi(x),(\ell-1)\eta)$;
    \item $T_1^{p_{1,m}(n_{j}+\dots+n_{\ell-1})}\cdots T_d^{p_{d,m}(n_{j}+\dots+n_{\ell-1})}x_{\ell-1}\in S_{a_{j}}V_{m}$ for $1\leq m\leq s$;
    \item $T_1^{q_1(n_{j}+\dots+n_{\ell-1})}\cdots T_d^{q_d(n_{j}+\dots+n_{\ell-1})} \pi(x_{\ell-1})\in B(\pi(x),(\ell-1)\eta)$ for $(q_1,\dots,q_d)\in \cC$.
\end{itemize}

Choose $0<\eta_{\ell}<\eta$ such that, for $1\leq j \leq \ell-1$,\begin{itemize}
    \item $T_1^{p_{1,m}(n_{j}+\dots+n_{\ell-1})}\cdots T_d^{p_{d,m}(n_{j}+\dots+n_{\ell-1})}B(x_{\ell-1},\eta_{\ell})\subseteq S_{a_{j}} V_{m}$ for every $1\leq m\leq s$;
    \item $T_1^{q_1(n_{j}+\dots+n_{\ell-1})}\cdots T_d^{q_d(n_{j}+\dots+n_{\ell-1})}B(\pi(x_{\ell-1}),\eta_{\ell}) \subseteq B(\pi(x),(\ell-1)\eta)$ for every $(q_1,\dots,q_d)\in \cC$.
\end{itemize}

Let $I_{\ell} = \pi^{-1}(B(\pi(x_{\ell-1}),\eta_{\ell}))$. Since $\pi(x_{\ell-1})\in B(\pi(x),\delta)\subseteq \cap_{j=1}^{N} S_{a_j}W_0$, $\pi(x_{\ell-1})\in M_{\ell} \subseteq \pi(I_{\ell})\subseteq B(\pi(x_{\ell-1}),\eta_\ell)$, where $M_{\ell} = \cap_{m=1}^{s}\pi(I_{\ell}\cap S_{a_{\ell}}V_{m}) \cap \pi(B(x_{\ell-1},\eta_\ell))$.

By \cref{lemma: reduce_type}, there exists $(p_1,\dots,p_d)\in \cP$ such that,  $(p_1,\dots,p_d,n_1,\dots,n_{\ell-1})\vdc \cP$ has strictly smaller type than $\cP$ for all large enough $n_1,\dots,n_{\ell-1}$. Without loss of generality, we may assume that $(p_1,\dots,p_d,n_1,\dots,n_{\ell-1})\vdc \cP$ is very nice and $(p_1,\dots,p_d)=(p_{1,1},\dots,p_{d,1})$.

Let $\cC'$ be the family consisting of the tuple $(-p_1,\dots,-p_d)$, the tuples $(q_{1,r}-p_1,\dots,q_{d,r}-p_d)$ for $1\leq r \leq s'$, and the tuples $(\partial_{n_j+\dots+n_{\ell-1}}q_{1,r}-p_1,\dots,\partial_{n_j+\dots+n_{\ell-1}}q_{d,r}-p_d)$ for $1\leq r \leq s'$ and $1\leq j\leq \ell-1$.

By the induction hypothesis, $\pi$ has the property $\Lambda((p_1,\dots,p_d,n_1,\dots,n_{\ell-1})\vdc \cP,\cC')$. Therefore, for open sets $I_{\ell}\cap S_{a_\ell}V_1,\dots,I_{\ell}\cap S_{a_\ell}V_s,\underbrace{B(x_{\ell-1},\eta_{\ell}),\dots, B(x_{\ell-1},\eta_{\ell})}_{s(\ell-1)\text{ times}}$, there exist infinitely many $n_\ell$ for which there exists $y_\ell \in I_{\ell} \cap S_{a_\ell} V_1$ such that, for $1\leq j \leq \ell-1$,\begin{enumerate}
    \item $T_1^{p_{1,m}(n_\ell)-p_1(n_\ell)}\cdots T_d^{p_{d,m}(n_\ell)-p_d(n_\ell)}y_\ell \in I_\ell \cap S_{a_\ell}V_m$ for $2\leq m\leq s$;
    \item $T_1^{\partial_{n_j + \dots+n_{\ell-1}}p_{1,m}(n_{\ell}) - p_1(n_\ell)}\cdots T_d^{\partial_{n_j + \dots+n_{\ell-1}}p_{d,m}(n_{\ell}) - p_d(n_\ell)}y_\ell \in B(x_{\ell-1},\eta_{\ell})$ for $1\leq m \leq s$;
    \item $T_1^{-p_1(n_{\ell})}\cdots T_d^{-p_d(n_{\ell})}\pi(y_\ell),T_1^{q_{1}(n_\ell)-p_1(n_\ell)}\cdots T_d^{q_{d}(n_\ell)-p_d(n_\ell)}\pi(y_\ell)\in M_{\ell}$ for $(q_1,\dots, q_d)\in \cC$;
    \item $T_1^{\partial_{n_{j}+\dots+n_{\ell-1}}q_1(n_\ell) -p_1(n_\ell)}\cdots T_d^{\partial_{n_{j}+\dots+n_{\ell-1}}q_d(n_\ell) -p_d(n_\ell)} \pi(y_\ell)\in M_{\ell} \subseteq B(\pi(x_{\ell-1}),\eta_\ell)$ for $(q_1,\dots,q_d)\in \cC$.
\end{enumerate}

Set $x_{\ell} = T_1^{-p_1(n_{\ell})}\cdots T_d^{-p_d(n_{\ell})}y_{\ell}$. By $(1)$, for $1\leq m\leq s$, we have $T_1^{p_{1,m}(n_\ell)}\cdots T_d^{p_{d,m}(n_\ell)}x_\ell \in S_{a_\ell}V_m$.

By $(2)$, for every $1\leq m\leq s$ and every $1\leq j\leq \ell-1$, we have\begin{align*}
&T_1^{p_{1,m}(n_j+\cdots+n_\ell)}\cdots
 T_d^{p_{d,m}(n_j+\cdots+n_\ell)}x_\ell       \\
&\quad =
T_1^{p_{1,m}(n_j+\cdots+n_{\ell-1})}
\cdots T_d^{p_{d,m}(n_j+\cdots+n_{\ell-1})} 
\Big(
T_1^{\partial_{n_j+\cdots+n_{\ell-1}}p_{1,m}(n_\ell)}\cdots
T_d^{\partial_{n_j+\cdots+n_{\ell-1}}p_{d,m}(n_\ell)}
x_\ell
\Big)                                        \\
&\quad \in
T_1^{p_{1,m}(n_j+\cdots+n_{\ell-1})}\cdots 
T_d^{p_{d,m}(n_j+\cdots+n_{\ell-1})}
B(x_{\ell-1},\eta_\ell)   \subseteq S_{a_j}V_m.
\end{align*}

Similarly, by $(4)$, for every $(q_1,\dots,q_d)\in\cC$ and every $1\leq j\leq \ell-1$, we obtain\begin{align*}
    T_1^{q_1(n_j+\cdots+n_\ell)}\cdots T_d^{q_d(n_j+\cdots+n_\ell)}\pi(x_\ell) \in T_1^{q_1(n_j+\cdots+n_{\ell-1})}\cdots T_d^{q_d(n_j+\cdots+n_{\ell-1})}B(\pi(x_{\ell-1}),\eta_\ell) \subseteq B(\pi(x),\ell\eta).
\end{align*}

Finally, by $(3)$, for every $(q_1,\dots,q_d)\in \cC$, we have $\pi(x_\ell),T_1^{q_1(n_\ell)}\cdots T_d^{q_d(n_\ell)}\pi(x_\ell)\in M_\ell \subseteq B(\pi(x),\ell\eta)$.

\subsubsection*{Case 2: there exists $m\in \{1,\dots,s\}$ such that $\deg((p_{1,m},\dots,p_{d,m}))=1$.}

We need two intermediate claims, whose proofs will be given later.

\begin{claim}\label{claim 1}
     Assume that $\pi$ has the property $\Lambda(\cP',\cC')$ for any very nice family of $d$-tuples of polynomials $\cP'$ with $\deg((p_1,\dots,p_d))\geq 2$ for every $(p_1,\dots,p_d)\in \cP'$ and matrix type $W$, and any family of $d$-tuples of polynomials $\cC'$. Let \begin{align*}
        \cP=(\cP_1,\dots,\cP_d) &= \big((p_{1,1},\dots,p_{d,1}),\dots,(p_{1,s},\dots,p_{d,s})\big)
    \end{align*}

     be a very nice family of $d$-tuples of polynomials with $\deg((p_{1,m},\dots,p_{d,m}))\geq 2$ for every $1\leq m\leq s$ and matrix type $W$, and let $\cB$ be a family of $d$-tuples of polynomials such that $\deg((b_1,\dots,b_d))<2$ for all $(b_1,\dots,b_d)\in\cB$. Suppose that $\cP\cup\cB$ is very nice. Then for any family of $d$-tuples of polynomials $\cC$ and open subsets $U_0,U_1,\dots,U_s$ of $X$ with $\cap_{m=0}^{s}\pi(U_m)\neq\emptyset$, there exist infinitely many $n\in \N$ for which there exists $z\in U_0$ such that\begin{itemize}
        \item $T_1^{b_1(n)}\cdots T_d^{b_d(n)}z\in U_0$ for $(b_1,\dots,b_d)\in \cB$;
        \item $T_1^{p_{1,m}(n)}\cdots T_d^{p_{d,m}(n)}z\in U_m$ for $1\leq m \leq s$;
        \item $T_1^{q_1(n)}\cdots T_d^{q_d(n)}\pi(z)\in \cap_{m=0}^{s}\pi(U_m)$ for $(q_1,\dots,q_d)\in \cC$.
    \end{itemize}
\end{claim}

\begin{claim}\label{claim 2}
    Assume that $\pi$ has the property $\Lambda(\cP',\cC')$ for any very nice family of $d$-tuples of polynomials $\cP'$ with $\deg((p_1,\dots,p_d))\geq 2$ for every $(p_1,\dots,p_d)\in \cP'$ and matrix type $W$ and any family of $d$-tuples of polynomials $\cC'$. Let $\cP$ be a very nice family of $d$-tuples of polynomials with $\deg((p_1,\dots,p_d))\geq 2$ for every $(p_1,\dots,p_d)\in \cP$ and matrix type $W$ and let \begin{align*}
        \cB= \big((b_{1,1},\dots,b_{d,1}),\dots,(b_{1,t},\dots,b_{d,t})\big)
    \end{align*}

    be a family of $d$-tuples of polynomials such that $\deg(\cB)=1$. Suppose that $\pi$ has the property $\Lambda(\cB,\mathcal{D})$ for any family of $d$-tuples of polynomials $\mathcal{D}$ and that $\cP\cup\cB$ is very nice. Then for any family of $d$-tuples of polynomials $\cC$ and open subsets $U_0,U_1,\dots,U_t$ of $X$ with $\cap_{i=0}^{t}\pi(U_i)\neq\emptyset$, there exist infinitely many $n\in \N$ for which there exists $z\in U_0$ such that\begin{itemize}
        \item $T_1^{p_1(n)}\cdots T_d^{p_d(n)}z\in U_0$ for $(p_1,\dots,p_d)\in \cP$;
        \item $T_1^{b_{1,i}(n)}\cdots T_d^{b_{d,i}(n)}z\in U_i$ for $1\leq i\leq t$;
        \item $T_1^{q_1(n)}\cdots T_d^{q_d(n)}\pi(z)\in \cap_{i=0}^{t}\pi(U_i)$ for $(q_1,\dots,q_d)\in \cC$.
    \end{itemize}\end{claim}

    Let $t$ be the smallest integer such that $\deg((p_{1,t+1},\dots,p_{d,t+1}))=1$. Since $\cP$ is very nice, it follows that $\deg((p_{1,i},\dots,p_{d,i}))=1$ for $t+1\leq i \leq s$.

    Let \begin{align*}
    \cB &= \big((p_{1,t+1},\dots,p_{d,t+1}),\dots,(p_{1,s},\dots,p_{d,s})\big),\\
    \cP' &= \big((p_{1,1},\dots,p_{d,1}),\dots,(p_{1,t},\dots,p_{d,t})\big).
    \end{align*}

Then $\cP'$ is a very nice family of type at most $W$. Since $\deg((p_{1,m},\dots,p_{d,m}))\geq 2$ for every $1\leq m\leq t$, Case $1$ implies that $\pi$ has the property $\Lambda(\cP',\cC)$. Moreover, since $\cP$ is very nice, if $p_{1,t+1}\neq 0$ then $\cB$ is nice, and consequently $p_{j,i}=0$ for every $t+1\leq i\leq s$ and $j\geq 2$. If $p_{1,t+1}=0$, let $j$ be the smallest integer such that $p_{j,t+1}\neq 0$. Since $\cP$ is very nice, we have $p_{j',i}= 0$ for every $j'\in \{1,\dots,j-1,j+1,\dots, d\}$ and $t+1\leq i \leq s$. Thus, in either case, there exists $j\in\{1,\dots,d\}$ such that $p_{j',i}=0$ for every $t+1\leq i\leq s$ and every $1\leq j'\leq d$ with $j'\neq j$. Thus, by \cref{lemma: linear stronger}, it follows that $\pi$ has the property $\Lambda(\cB,\mathcal{D})$ for any family of $d$-tuples of polynomials $\mathcal{D}$.

Applying \cref{claim 1} to $\cP'$, $\cB$, $\cC$ and the open sets $V_{0},V_{1},\dots,V_{t}$, there exist infinitely many $k\in \N$ for which there exists $x\in V_0$ such that \begin{itemize}
    \item $T_{1}^{b_1(k)}\cdots T_d^{b_d(k)}x \in V_0$ for $(b_1,\dots,b_d)\in \cB$;
    \item $T_{1}^{p_{1,m}(k)}\cdots T_{d}^{p_{d,m}(k)}x\in V_m$ for $1\leq m \leq t$;
    \item $T_{1}^{q_1(k)}\cdots T_{d}^{q_d(k)}\pi(x)\in W_0$ for $(q_1,\dots,q_d)\in \cC$.
\end{itemize}

Thus, for $1\leq i\leq s-t$, there is some $x_i\in V_{i+t}$ such that $\pi(x) =\pi (T_{1}^{-p_{1,i+t}(k)}\cdots T_{d}^{-p_{d,i+t}(k)}x_i)$. Choose $\eta>0$ such that $T_{1}^{p_{1,i+t}(k)}\cdots T_{d}^{p_{d,i+t}(k)}B(T_{1}^{-p_{1,i+t}(k)}\cdots T_{d}^{-p_{d,i+t}(k)}x_i,\eta)\subseteq V_{i+t}$ for $1\leq i \leq s-t$, $T_{1}^{p_{1,m}(k)}\cdots T_{d}^{p_{d,m}(k)}B(x,\eta)\subseteq V_m$ for $1\leq m \leq t$ and $T_{1}^{q_1(k)}\cdots T_{d}^{q_d(k)}B(\pi(x),\eta)\subseteq W_0$ for $(q_1,\dots,q_d)\in\cC$.

Let $W_0' = B(x,\eta)\cap \pi^{-1}(B(\pi(x),\eta))\cap V_0$ and let $W_i = B(T_{1}^{-p_{1,i+t}(k)}\cdots T_{d}^{-p_{d,i+t}(k)}x_i,\eta)$ for $1\leq i \leq s-t$. Notice that $W_0'$ is a nonempty open set as $x\in W_0'$ and $\pi(x)\in \pi(W_i)$ for $1\leq i \leq s-t$. Let\begin{align*}
    \cP'' &= \big((\partial_k p_{1,1},\dots,\partial_k p_{d,1}),\dots,(\partial_k p_{1,t},\dots,\partial_k p_{d,t})\big),\\
    \cC' &= \big((\partial_k q_{1,1},\dots,\partial_k q_{d,1}),\dots,(\partial_k q_{1,s'},\dots,\partial_k q_{d,s'})\big).
\end{align*}

Since $\cP''$ has type at most $W$ and $\deg((p_1,\dots,p_d))\geq 2$ for every $(p_1,\dots,p_d)\in \cP''$, it follows from Case $1$ that $\pi$ has the property $\Lambda(\cP'',\cC')$. Applying \cref{claim 2} to $\cP''$, $\cB$, $\cC'$ and the open sets $W_0',W_1,\dots,W_{s-t}$, there exist infinitely many $n\in \N$  for which there exists $z\in W_0'$ such that \begin{enumerate}
    \item $T_{1}^{\partial_k p_{1,m}(n)}\cdots T_{d}^{\partial_k p_{d,m}(n)}z\in W_0'\subseteq B(x,\eta)$ for $1\leq m \leq t$;
    \item $T_{1}^{p_{1,i+t}(n)}\cdots T_{d}^{p_{d,i+t}(n)}z\in W_i$ for $1\leq i \leq s-t$;
    \item $T_{1}^{\partial_k q_1(n)}\cdots T_{d}^{\partial_k q_d(n)}\pi(z) \in \pi(W_0')\cap  \cap_{i=1}^{s-t}\pi(W_i)\subseteq B(\pi(x),\eta)$ for $(q_1,\dots,q_d)\in \cC$.
\end{enumerate}

By $(1)$, for every $1\leq m\leq t$, we have\begin{align*}
T_1^{p_{1,m}(n+k)}\cdots 
 T_d^{p_{d,m}(n+k)}z
&\quad =
T_1^{p_{1,m}(k)}\cdots 
T_d^{p_{d,m}(k)} 
\Big(
T_1^{\partial_{k}p_{1,m}(n)}\cdots 
T_d^{\partial_{k}p_{d,m}(n)}
z
\Big)                                        \\
&\quad \in
T_1^{p_{1,m}(k)}\cdots
T_d^{p_{d,m}(k)}
B(x,\eta)   \subseteq V_m.
\end{align*}

Similarly, by $(3)$, for every $(q_1,\dots,q_d)\in\cC$, we obtain\begin{align*}
    T_1^{q_1(n+k)}\cdots  T_d^{q_d(n+k)}\pi(z) \in T_1^{q_1(k)} \cdots T_d^{q_d(k)}B(\pi(x),\eta) \subseteq W_0.
\end{align*}

Finally, by $(2)$, for every $1\leq i \leq s-t$, we have\begin{align*}
T_1^{p_{1,i+t}(n+k)}\cdots
 T_d^{p_{d,i+t}(n+k)}z
&\quad =
T_1^{p_{1,i+t}(k)}\cdots
T_d^{p_{d,i+t}(k)} 
\big(T_1^{p_{1,i+t}(n)}\cdots T_d^{p_{d,i+t}(n)}z\big)                                        \\
&\quad \in
T_1^{p_{1,i+t}(k)}\cdots
T_d^{p_{d,i+t}(k)}
W_i   \subseteq V_{i+t}.
\end{align*}

It remains to prove the claims.

\begin{proof}[Proof of \cref{claim 1}]
    The claim is clear if $\cB$ is empty. Hence assume that $\cB\neq\emptyset$. We prove the claim by induction on the type of $\cB$.
    
    Fix a nonempty family of $d$-tuples of polynomials $\cB = (\cB_1,\dots,\cB_d)$ such that $\deg((b_1,\dots,b_d))<2$ for all $(b_1,\dots,b_d)\in \cB$ and $\cP\cup\cB$ is very nice. Assume that the claim has already been proved for all families of $d$-tuples of polynomials whose type is strictly smaller than that of $\cB$.

    Let $W_0 = \cap_{m=0}^{s}\pi(U_m)$. As before, we may assume that $\pi(U_m)=W_0$ for $0\leq m \leq s$, and there exists $x\in X$ with $\pi(x)\in W_0$, $a_1,\dots,a_N\in \Z^{k}$ and $\delta>0$ such that $\pi^{-1}(B(\pi(x),\delta))\subseteq\cap_{m=0}^{s}\cup_{j=1}^{N}S_{a_j}U_m$, and $B(\pi(x),\delta)\subseteq W_0 \cap \left(\cap_{j=1}^{N}S_{a_j}W_0 \right)$.

    Let $\eta=\delta/N$. Inductively we will construct infinitely many $n_1,\dots,n_N$ for which there exist $x_1,\dots,x_N\in X$ and $k_1,\dots,k_{N+1}\in \{1,\dots,N\}$ with $k_1=1$, such that for $1\leq j \leq \ell\leq N$,\begin{itemize}
        \item $x_\ell\in S_{a_{k_{\ell+1}}}U_0$;
        \item $\pi(x_\ell)\in B(\pi(x),\ell\eta)$;
        \item $T_1^{b_1(n_j+\dots+n_\ell)}\cdots T_d^{b_d(n_j+\dots+n_\ell)}x_\ell\in S_{a_{k_j}}U_0$ for $(b_1,\dots,b_d)\in \cB$;
        \item $T_1^{p_{1,m}(n_j+\dots+n_\ell)}\cdots T_d^{p_{d,m}(n_j+\dots+n_\ell)}x_\ell \in S_{a_{k_j}}U_m$ for $1\leq m\leq s$;
        \item $T_1^{q_1(n_j+\dots+n_\ell)}\cdots T_d^{q_d(n_j+\dots+n_\ell)}\pi(x_\ell)\in B(\pi(x),\ell\eta)$ for $(q_1,\dots,q_d)\in \cC$.
    \end{itemize} 

    Note that if this has been achieved, choose $1\leq j \leq \ell\leq N$ with $k_j = k_{\ell+1}$. Then, setting $z = S_{-a_{k_j}}x_{\ell}$, we conclude the desired result for $n=n_j+\dots+n_\ell$.

    \textbf{Step 1:} Let $I_1 = \pi^{-1}(B(\pi(x),\eta))$. Then $\pi(x)\in M_1\subseteq \pi(I_1)$, where $M_1 = \cap_{m=0}^{s}\pi(I_1\cap S_{a_1}U_m)$.

    Recall that, for $i=1,\dots, d$,\begin{align*}
        \widehat{\cB}_i=\{b_i\colon (b_1,\dots,b_d)\in \cB, b_i \text{ is nonconstant and }  b_{i'} \text{ is constant for } i'<i\}.
    \end{align*}

     If $\widehat{\cB}_i\neq\emptyset$ for some $i\geq 2$, let $i$ be maximal with this property and choose $(b_1,\dots,b_d)$ such that $b_1=\cdots=b_{i-1}=0$ and $b_i\in \widehat{\cB}_i$. Otherwise, choose any $(b_1,\dots,b_d)\in \cB$.
        
    Let \begin{align*}
        \cB' &= (\cB_1 -b_1,\dots,\cB_d-b_d)^*,\\
        \cP' &= (\cP_1 - b_1,\dots,\cP_d-b_d),\\
        \cC' &= \big((-b_1,\dots,-b_d),(q_{1,1}-b_1,\dots,q_{d,1}-b_d),\dots, (q_{1,s'}-b_1,\dots,q_{d,s'}-b_d)\big ).
    \end{align*}

    As in the proof of \cref{lemma: reduce_type}, the family $\cB'$ has strictly smaller type than $\cB$. 
    
    Let $(p_1,\dots,p_d)\in \cP$. Suppose first that $p_1\neq 0$. Since $\cP$ is very nice and $\deg((p_1,\dots,p_d))\geq 2$, we have $\deg(p_1)\geq 2$, and hence $p_1-b_1\sim p_1$. If $p_1=0$, let $i$ be the smallest integer such that $p_i\neq 0$. Since $\cP\cup\cB$ is very nice, we have $b_{i'}=0$ for $1\leq i'< i$. As before, since $\deg((p_1,\dots,p_d))\geq 2$ and $\deg((b_1,\dots,b_d))= 1$, $\deg(p_i)\geq 2$ and $p_i-b_i\sim p_i$. Hence, $\cP'$ is a very nice family with type $W$ and $\deg((p_1',\dots,p_d'))\geq 2$ for every $(p_1',\dots,p_d')\in \cP'$. 

    By the induction hypothesis, for $\cC'$ and open sets $I_1\cap S_{a_1}U_0, I_1\cap S_{a_1}U_1,\dots,I_1\cap S_{a_1}U_s$ there exist infinitely many $n_1\in \N$ for which there exists $y_1 \in I_{1}\cap S_{a_1}U_0$ such that\begin{itemize}
        \item $T_1^{b_1'(n_1)-b_1(n_1)}\cdots T_d^{b_d'(n_1)-b_d(n_1)}y_1\in I_1\cap S_{a_1}U_0$ for $(b_1',\dots,b_d')\in \cB$;
        \item $T_1^{p_{1,m}(n_1)-b_1(n_1)}\cdots T_d^{p_{d,m}(n_1)-b_d(n_1)}y_1\in I_1\cap S_{a_1}U_m$ for $1\leq m \leq s$;
        \item $T_1^{-b_1(n_1)}\cdots T_d^{-b_d(n_1)}\pi(y_1), T_1^{q_1(n_1)-b_1(n_1)}\cdots T_d^{q_d(n_1)-b_d(n_1)}\pi(y_1)\in M_1$ for $(q_1,\dots,q_d)\in \cC$.
    \end{itemize}

    Setting $x_1 = T_1^{-b_1(n_1)}\cdots T_d^{-b_d(n_1)}y_1$ and choosing $k_2\in \{1,\dots,N\}$ such that $x_1\in S_{a_{k_2}}U_0$, we obtain the desired conclusion.

    \textbf{Step $\ell$:} Let $\ell\geq 2$ be an integer and assume that we have already chosen  infinitely many $n_1,\dots,n_{\ell-1}$, $x_1,\dots,x_{\ell-1}\in X$ and $k_1,\dots,k_{\ell}\in \{1,\dots,N\}$ with the required properties up to level $\ell-1$.

    Choose $0<\eta_\ell<\eta$ such that, for $1\leq j \leq \ell-1$,\begin{itemize}
        \item $B(x_{\ell-1},\eta_{\ell})\subseteq S_{a_{k_\ell}}U_0$;
        \item $T_1^{b_1(n_j+\dots+n_{\ell-1})}\cdots T_d^{b_d(n_j+\dots+n_{\ell-1})}B(x_{\ell-1},\eta_\ell)\subseteq S_{a_{k_j}}U_0$ for $(b_1,\dots,b_d)\in \cB$;
        \item $T_1^{p_{1,m}(n_j+\dots+n_{\ell-1})}\cdots T_d^{p_{d,m}(n_j+\dots+n_{\ell-1})}B(x_{\ell-1},\eta_\ell)\subseteq S_{a_{k_j}}U_m$ for $1\leq m \leq s$;
        \item $T_1^{q_1(n_j+\dots+n_{\ell-1})}\cdots T_d^{q_d(n_j+\dots+n_{\ell-1})}B(\pi(x_{\ell-1}),\eta_\ell)\subseteq B(\pi(x),(\ell-1)\eta)$ for $(q_1,\dots,q_d)\in \cC$.
    \end{itemize}

    Let $I_{\ell} = \pi^{-1}(B(\pi(x_{\ell-1}),\eta_\ell))$. Note that $\pi(x_{\ell-1})\in M_{\ell}\subseteq \pi(I_\ell)$, where $M_{\ell} = \cap_{m=1}^{s} \pi(I_\ell \cap S_{a_{k_\ell}}U_m) \cap \pi(B(x_{\ell-1},\eta_{\ell}))$.

    If $\widehat{\cB}_i\neq\emptyset$ for some $i\geq 2$, let $i$ be maximal with this property and choose $(b_1,\dots,b_d)$ such that $b_1=\cdots=b_{i-1}=0$ and $b_i\in \widehat{\cB}_i$. Otherwise, choose any $(b_1,\dots,b_d)\in \cB$.
    
    Let $\cB' = (\cB_1 -b_1,\dots,\cB_d-b_d)^*$. This family has strictly smaller type than $\cB$. Let $\cC'$ be the family consisting of $(-b_1,\dots,-b_d)$, the tuples $(q_1-b_1,\dots,q_d-b_d)$ for $(q_1,\dots,q_d)\in \cC$, and the tuples $(\partial_{n_j+\dots+n_{\ell-1}}q_1-b_1,\dots,\partial_{n_j+\dots+n_{\ell-1}}q_d-b_d)$ for $(q_1,\dots,q_d)\in \cC$ and $1\leq j \leq \ell-1$. 

    As before, we may assume without loss of generality that $(b_1,\dots,b_d,n_1,\dots,n_{\ell-1})\vdc\cP$ is very nice. By an argument similar to that given in Step $1$, this family has matrix type $W$.
    
    By the induction hypothesis, for $\cC'$ and open sets $\underbrace{B(x_{\ell-1},\eta_{\ell}),\dots, B(x_{\ell-1},\eta_{\ell})}_{s(\ell-1)\text{ times}},B(x_{\ell-1},\eta_{\ell}),I_{\ell}\cap S_{a_{k_{\ell}}}U_1,\dots,I_{\ell}\cap S_{a_{k_{\ell}}}U_s$, there exist infinitely many $n_\ell$ for which there exists $y_\ell\in B(x_{\ell-1},\eta_{\ell})$ such that for $1\leq j \leq \ell-1$,\begin{enumerate}
        \item $T_{1}^{b_1'(n_\ell)-b_1(n_\ell)}\cdots T_{d}^{b_d'(n_\ell)-b_d(n_\ell)}y_\ell \in B(x_{\ell-1},\eta_{\ell})$ for $(b_1',\dots,b_d')\in \cB$;
        \item $T_{1}^{p_{1,m}(n_\ell)-b_1(n_\ell)}\cdots T_{d}^{p_{d,m}(n_\ell)-b_d(n_\ell)}y_\ell \in I_\ell \cap S_{a_{k_\ell}}U_m$ for $1\leq m \leq s$;
        \item $T_1^{\partial_{n_j+\dots+n_{\ell-1}}p_{1,m}(n_\ell)-b_1(n_\ell)} \cdots T_d^{\partial_{n_j+\dots+n_{\ell-1}}p_{d,m}(n_\ell)-b_d(n_\ell)}y_\ell \in B(x_{\ell-1},\eta_\ell)$ for $1\leq m\leq s$;
        \item $T_1^{-b_1(n_\ell)}\cdots T_d^{-b_d(n_\ell)}\pi(y_\ell)$, $T_{1}^{q_1(n_\ell)-b_1(n_\ell)}\cdots T_{d}^{q_d(n_\ell)-b_d(n_\ell)}\pi(y_\ell)\in M_{\ell}$ for $(q_1,\dots,q_d)\in \cC$;
        \item $T_1^{\partial_{n_j+\dots+n_{\ell-1}}q_1(n_\ell)-b_1(n_\ell)}\cdots T_d^{\partial_{n_j+\dots+n_{\ell-1}}q_d(n_\ell)-b_d(n_\ell)}\pi(y_\ell)\in M_{\ell}$ for $(q_1,\dots,q_d)\in \cC$.
    \end{enumerate}

    Set $x_{\ell} = T_{1}^{-b_1(n_\ell)}\cdots T_d^{-b_d(n_\ell)}y_{\ell}$ and choose $k_{\ell+1}\in \{1,\dots,N\}$ such that $x_\ell\in S_{a_{k_{\ell+1}}}U_0$. By $(2)$, for $1\leq m\leq s$, we have $T_{1}^{p_{1,m}(n_\ell)}\cdots T_d^{p_{d,m}(n_\ell)}x_\ell\in S_{a_{k_\ell}}U_m$.

    By $(3)$, for every $1\leq m\leq s$ and every $1\leq j \leq \ell-1$, we have\begin{align*}
&T_1^{p_{1,m}(n_j+\cdots+n_\ell)}\cdots 
 T_d^{p_{d,m}(n_j+\cdots+n_\ell)}x_\ell       \\
&\quad =
T_1^{p_{1,m}(n_j+\cdots+n_{\ell-1})}\cdots 
T_d^{p_{d,m}(n_j+\cdots+n_{\ell-1})} 
\Big(
T_1^{\partial_{n_j+\cdots+n_{\ell-1}}p_{1,m}(n_\ell)}\cdots 
T_d^{\partial_{n_j+\cdots+n_{\ell-1}}p_{d,m}(n_\ell)}
x_\ell
\Big)                                        \\
&\quad \in
T_1^{p_{1,m}(n_j+\cdots+n_{\ell-1})}\cdots
T_d^{p_{d,m}(n_j+\cdots+n_{\ell-1})}
B(x_{\ell-1},\eta_\ell)   \subseteq S_{a_{k_j}}U_m .
\end{align*}

Similarly, by $(5)$, for every $(q_1,\dots,q_d)\in\cC$ and every $1\leq j\leq \ell-1$, we obtain\begin{align*}
    T_1^{q_1(n_j+\cdots+n_\ell)}\cdots  T_d^{q_d(n_j+\cdots+n_\ell)}\pi(x_\ell) \in T_1^{q_1(n_j+\cdots+n_{\ell-1})} \cdots T_d^{q_d(n_j+\cdots+n_{\ell-1})}B(\pi(x_{\ell-1}),\eta_\ell) \subseteq B(\pi(x),\ell\eta).
\end{align*}

By $(4)$, for every $(q_1,\dots,q_d)\in \cC$, we have $\pi(x_\ell),T_1^{q_1(n_\ell)}\cdots T_d^{q_d(n_\ell)}\pi(x_\ell)\in M_\ell \subseteq B(\pi(x),\ell\eta)$.

Finally, by $(1)$, for every $(b_1',\dots,b_d')\in \cB$ and
$1\leq j \leq \ell-1$, we have\begin{align*}
&T_1^{b_{1}'(n_j+\cdots+n_\ell)}\cdots
 T_d^{b_{d}'(n_j+\cdots+n_\ell)}x_\ell       \\
&\quad =
T_1^{b_{1}'(n_j+\cdots+n_{\ell-1})}\cdots
T_d^{b_{d}'(n_j+\cdots+n_{\ell-1})} 
\big(T_1^{b_1'(n_\ell)}\cdots T_d^{b_d'(n_\ell)}x_\ell\big)                                        \\
&\quad \in
T_1^{b_1'(n_j+\cdots+n_{\ell-1})}\cdots
T_d^{b_d'(n_j+\cdots+n_{\ell-1})}
B(x_{\ell-1},\eta_\ell)   \subseteq S_{a_{k_j}}U_0 .
\end{align*}

\end{proof}

\begin{proof}[Proof of \cref{claim 2}]
    Let $W_0 = \cap_{i=0}^{t}\pi(U_i)$. We may assume that $\pi(U_i)=W_0$ for $0\leq i \leq t$. Since $\pi$ has the property $\Lambda(\cB,\cC \cup \cP)$, for the open sets $U_0,U_1,\dots,U_t$ there exist infinitely many $k\in \N$ for which there exists $y\in U_0$ such that\begin{itemize}
        \item $T_1^{b_{1,i}(k)}\cdots T_d^{b_{d,i}(k)}y\in U_i$ for $1\leq i \leq t$;
        \item $T_1^{p_1(k)}\cdots T_d^{p_d(k)}\pi(y),T_1^{q_1(k)}\cdots T_d^{q_d(k)}\pi(y)\in W_0$ for $(p_1,\dots,p_d)\in \cP$ and $(q_1,\dots,q_d)\in \cC$.
    \end{itemize}

    Fix a sufficiently large $k$ with this property, and let\begin{align*}
        V_0 = U_0 \cap \bigcap_{i=1}^{t}T_1^{-b_{1,i}(k)}\cdots T_d^{-b_{d,i}(k)}U_i.
    \end{align*}
    
    This set is nonempty since $y\in V_0$. For each $(p_1,\dots,p_d)\in \cP$, let\begin{align*}
        V_{(p_1,\dots,p_d)} = T_1^{-p_1(k)}\cdots T_d^{-p_d(k)}U_0.
    \end{align*}
    
    Define\begin{align*}
        M= W_0 \cap \bigcap_{(q_1,\dots,q_d)\in \cC}T_1^{-q_1(k)}\cdots T_d^{-q_d(k)}W_0 \cap \pi(V_0) \cap \bigcap_{(p_1,\dots,p_d)\in \cP} \pi(V_{(p_1,\dots,p_d)}).
    \end{align*}

    The set $M$ is nonempty since $\pi(y)\in M$. By replacing $V_0$ and each $V_{(p_1,\dots,p_d)}$ with their intersections with $\pi^{-1}(M)$, we may assume that $M=\pi(V_0)=\pi(V_{(p_1,\dots,p_d)})$ for every $(p_1,\dots,p_d)\in \cP$.

    Let $\cP'$ be the very nice family consisting of the tuples $(\partial_k p_1,\dots,\partial_k p_d)$ with $(p_1,\dots,p_d)\in \cP$, and let $\cC'$ be the family consisting of the tuples $(\partial_k q_1,\dots,\partial_k q_d)$ with $(q_1,\dots,q_d)\in \cC$. Then $\pi$ has the property $\Lambda(\cP',\cC')$.
    
    Therefore, by \cref{claim 1}, for $\cP'$, $\cB$, $\cC'$, and the open sets $V_0,V_{(p_1,\dots,p_d)}$ for $(p_1,\dots,p_d)\in \cP$, there exist infinitely many $n\in\N$ for which there exists $z\in V_0$ such that
    
    \begin{itemize}
        \item $T_1^{b_1(n)}\cdots T_d^{b_d(n)}z\in V_0$ for $(b_1,\dots,b_d)\in \cB$;
        \item $T_1^{\partial_k p_1(n)}\cdots T_d^{\partial_k p_d(n)}z \in V_{(p_1,\dots,p_d)}$ for $(p_1,\dots,p_d)\in \cP$;
        \item $T_1^{\partial_k q_1(n)}\cdots T_d^{\partial_k q_d(n)}\pi(z)\in M$ for $(q_1,\dots,q_d)\in \cC$.
    \end{itemize}

    Since $V_0\subseteq U_0$, we have $z\in U_0$. Moreover, for $1\leq i \leq t$, since $V_0 \subseteq T_1^{-b_{1,i}(k)}\cdots T_d^{-b_{d,i}(k)}U_i$, it follows that\begin{align*}
        T_1^{b_{1,i}(n+k)}\cdots T_d^{b_{d,i}(n+k)}z \in T_1^{b_{1,i}(k)}\cdots T_d^{b_{d,i}(k)}V_0 \subseteq U_i.
    \end{align*}

    For each $(p_1,\dots,p_d)\in \cP$, by the definition of the sets $V_{(p_1,\dots,p_d)}$, we have\begin{align*}
        T_1^{p_1(n+k)}\cdots T_d^{p_d(n+k)}z = T_{1}^{p_1(k)+\partial_k p_1(n)}\cdots T_{d}^{p_d(k)+\partial_k p_d(n)}z \in T_1^{p_1(k)}\cdots T_d^{p_d(k)}V_{(p_1,\dots,p_d)} \subseteq U_0.
    \end{align*}

    Similarly, by the definition of $M$, for each $(q_1,\dots,q_d)\in\cC$ we have\begin{align*}
        T_1^{q_1(n+k)}\cdots T_d^{q_d(n+k)}\pi(z) \in T_1^{q_1(k)}\cdots T_d^{q_d(k)}M \subseteq W_0.
    \end{align*}
\end{proof}

\end{proof}

\section{Topological characteristic factors for distinct-degree polynomials}

In this section, we introduce the topological rational Kronecker factor of a system and prove \cref{thm: TFC_distinct_deg}.

\subsection{The topological rational Kronecker factor}\label{subsec: Krat}

Let $(X,T)$ be a system, and let $\mu$ be an invariant probability measure on $X$. Let $\mathcal{K}_{\operatorname{rat}}(T)$ be the closed subspace of $L^{2}(\mu)$ spanned by all $f\in L^{2}(\mu)$ such that $f\neq 0$ and $Tf = e(\alpha) f$ for some $\alpha\in\Q$, where $(Tf)(x)=f(Tx)$ and $e(\alpha)=e^{2\pi i \alpha}$. We denote by $Z_{\operatorname{rat}}$ the measurable factor induced by $\mathcal{K}_{\operatorname{rat}}(T)$.

We now introduce the topological analogue of  $Z_{\operatorname{rat}}$. Define\begin{align*}
    \mathcal{E}_{\operatorname{rat}}(X,T) = \{f\in C(X): Tf = e(\alpha) f \text{ for some }\alpha\in\Q\},
\end{align*}

and \begin{align*}
    \Krat(X,T) = \{(x,y)\in X\times X: f(x)=f(y)\text{ for all }f\in \mathcal{E}_{\operatorname{rat}}(X,T)\}.
\end{align*}

The relation $\Krat(X,T)$ is a closed invariant equivalence relation. We call the quotient system $(X/\Krat(X,T),T)$ the \emph{topological rational Kronecker factor} of $(X,T)$ and denote it by $X_{\operatorname{rat}}$. In general, $X_{\operatorname{rat}}$ and $Z_{\operatorname{rat}}$ need not coincide, see for instance, \cite[Section 7]{Bressaud_Durand_Maass_Eigenvalues_finite_rank:2010}. However, if $X$ is a pro-nilsystem, then $X_{\operatorname{rat}}$ and $Z_{\operatorname{rat}}$ coincide. 

Our definition of the topological rational Kronecker factor differs from the one introduced in \cite{Glasscock_Koutsogiannis_Richter_mult_comb_return:2019} for minimal systems. We will show that the two definitions coincide when the system is minimal. We first prove the following properties of the topological rational Kronecker factor. Here, an \emph{adding machine} is a system isomorphic to an inverse limit of finite cyclic systems $(\Z/r_i\Z,n\mapsto n+1)$, where $r_i\mid r_{i+1}$ for every $i\in\N$.

\begin{proposition}
    Let $(X,T)$ be a system. Then $(X_{\operatorname{rat}},T)$ is an equicontinuous system. Furthermore, if $(X,T)$ is transitive then $(X_{\operatorname{rat}},T)$ is isomorphic to an adding machine.
\end{proposition}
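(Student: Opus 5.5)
The plan is to realize $X_{\operatorname{rat}}$ concretely as the image of an evaluation map into a product of circles. Here $X$ is a compact metric space, so $C(X)$ is separable. Hence $\mathcal{E}_{\operatorname{rat}}(X,T)$, being a subset of a separable metric space, has a countable dense subset $\{f_i\}_{i\in\N}$, with $Tf_i=e(\alpha_i)f_i$ and $\alpha_i\in\Q$.

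Density in sup norm gives $\Krat(X,T)=\{(x,y): f_i(x)=f_i(y)\ \forall i\}$. Consider the continuous map
\[
\Phi\colon X\to \prod_{i}\overline{D}_{\|f_i\|_\infty}, \qquad \Phi(x)=(f_i(x))_i ,
\]
where $\overline{D}_r$ denotes the closed disc of radius $r$ in $\C$. The fibers of $\Phi$ are exactly the $\Krat$-classes. Since $X$ is compact, $X_{\operatorname{rat}}$ is homeomorphic to $\Phi(X)$. Under this identification $T$ becomes the restriction of the rotation $R\colon (z_i)_i\mapsto (e(\alpha_i)z_i)_i$, because $\Phi(Tx)=(f_i(Tx))_i=(e(\alpha_i)f_i(x))_i$.

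$R$ is an isometry for the metric $\sum_i 2^{-i}|z_i-w_i|/(1+\|f_i\|_\infty)$ on the product. Its restriction to the compact invariant set $\Phi(X)$ is therefore an isometry, hence equicontinuous. This proves the first assertion.

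For the second assertion, assume $(X,T)$ is transitive. Then $(\Phi(X),R)$ is a transitive equicontinuous system, hence minimal. (A transitive isometry on a compact space is minimal: every point has dense orbit, since orbit closures of points in an isometric system are minimal and one of them is everything.) I would next show that every coordinate $z_i$ is periodic under $R$. Write $\alpha_i=a_i/b_i$; then $R^{b_i}$ fixes the $i$-th coordinate.

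Take $r_j=\operatorname{lcm}(b_1,\dots,b_j)$, so that $r_j\mid r_{j+1}$. Let $\pi_j$ be the projection of $\Phi(X)$ onto the first $j$ coordinates. Its image is the orbit of the point $\pi_j(\Phi(x_0))$ under a rotation of order dividing $r_j$, where $x_0$ has dense orbit; this image is a finite cyclic system. So $\Phi(X)$ is the inverse limit of the finite cyclic factors $\pi_j(\Phi(X))$, each of cardinality $c_j$ dividing $r_j$, and moreover $c_j\mid c_{j+1}$ since they are factors of one another. Therefore $\Phi(X)\cong \varprojlim (\Z/c_j\Z,+1)$, which is an adding machine. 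If the $c_j$ stabilize, the system is finite cyclic; this fits the definition with eventually constant $r_i$.

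The one point I expect to need care with is the identification of the inverse limit with $\Phi(X)$. A point of $\Phi(X)$ is determined by all its coordinates, and compatible finite-level points lift, by compactness, to a point of $\Phi(X)$ since it is closed. The map $n\mapsto R^n\Phi(x_0)$ descends to $\Z/c_j\Z$ at each level, and these descended maps are compatible.

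One also checks that a transitive point maps onto a transitive point. This holds because $\Phi$ is a factor map and $\Phi(X)$ is the orbit closure of $\Phi(x_0)$.
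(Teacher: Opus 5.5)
Your proof is correct, but it takes a different route from the paper in both parts.

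\textbf{Equicontinuity.} The paper shows $\RP^{[1]}(X)\subseteq\Krat(X,T)$: a rational eigenfunction cannot separate a regionally proximal pair, by a limiting argument along the defining sequence. It then invokes the characterization of the maximal equicontinuous factor through the regionally proximal relation. You instead model $X_{\operatorname{rat}}$ as $\Phi(X)$ inside a product of discs. There $T$ acts as a coordinatewise rotation, hence as an isometry for an explicit compatible metric. Your argument is self-contained and gives metrizability of the quotient for free. It also avoids any structure theory of $\RP^{[1]}$, which is a less elementary input when $X$ is not minimal. The paper's argument has the merit of tying $X_{\operatorname{rat}}$ to the regionally proximal machinery used throughout.

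\textbf{Adding machine.} The paper first uses the transitive point to show two facts: every $f\in\mathcal{E}_{\operatorname{rat}}(X,T)$ has constant modulus, and each rational eigenspace is one-dimensional. It then normalizes a single eigenfunction $f_\alpha$ per eigenvalue by $f_\alpha(x_0)=1$. This identifies $X_{\operatorname{rat}}$ with the closure of the cyclic group $\{(e(n\alpha))_{\alpha\in\Lambda}:n\in\Z\}$. Its finite levels are exactly $\Z/r_i\Z$, with $r_i$ the least common multiple of the orders of $e(\alpha_1),\dots,e(\alpha_i)$.

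You bypass the normalization and the one-dimensionality altogether. Since each coordinate of $R$ is a rotation by a root of unity, every finite projection of the orbit closure of $\Phi(x_0)$ is a single periodic orbit of some length $c_j\mid r_j$. Closedness of $\Phi(X)$ then identifies it with the inverse limit. Your density argument also gives directly that the fibers of $\Phi$ are the $\Krat$-classes, which the paper obtains only after proving one-dimensionality.

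Your route is shorter and uses less. The paper's gives a more explicit description of the resulting adding machine in terms of the rational eigenvalues of $T$.
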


\begin{proof}
    Let $(x,y)\in \RP^{[1]}(X)$ and let $f\in\mathcal{E}_{\operatorname{rat}}(X,T)$. Since $(x,y)\in \RP^{[1]}(X)$, there exist $(n_i)_{i\in\N} \subseteq \Z$ and $(x_i)_{i\in\N},(y_{i})_{i\in \N}$ such that\begin{align*}
        (x_i,y_i)\to (x,y) \quad \text{ and } (T^{n_i}x_i,T^{n_i}y_i)\to (a,a),
    \end{align*}

    for some $a\in X$.

    Let $\alpha\in\Q$ such that $Tf = e(\alpha) f$. Without loss of generality, we may assume that $e(n_i\alpha)\to c$ for some $c\in \C$. Note that $c\neq 0$ since $|e(n_i\alpha)|=1$.
    
    Since $f$ is continuous, we have \begin{align*}
        (T^{n_i}f(x_i),T^{n_i}f(y_i))= (e(n_i\alpha)f(x_i),e(n_i\alpha)f(y_i))\to (cf(x),cf(y)) =(f(a),f(a)).
    \end{align*}

    Hence, $f(x)=f(y)$. Therefore, $(x,y)\in \Krat(X,T)$. Since the maximal equicontinuous factor is characterized as the maximal factor with $\RP^{[1]}=\Delta$ (see, for instance, {\cite[Chapter 9]{Auslander_minimal_flows_and_extensions:1988}}), it follows that $(X_{\operatorname{rat}},T)$ is equicontinuous.

    % Let $\pi: X\to X_{\operatorname{rat}}$ be the quotient map. For each $f\in\mathcal{E}_{\operatorname{rat}}(X,T)$, choose $\alpha_f\in\Q$ such that $Tf =e(\alpha_f)f$. Let \begin{align*}
    %     Z= \prod_{f\in \mathcal{E}_{\operatorname{rat}}(X,T)}\overline{f(X)}.
    % \end{align*}

    % Define $S: Z\to Z$ by $S((y_f)_{f\in \mathcal{E}_{\operatorname{rat}}(X,T)})) = (e(\alpha_f) y_f)_{f\in \mathcal{E}_{\operatorname{rat}}(X,T)}$. Note that $(Z,S)$ is a system.

    % Define
    
    % \begin{align*}
    %     \phi: X_{\operatorname{rat}} &\to \prod_{f\in \mathcal{E}_{\operatorname{rat}}(X,T)}\overline{f(X)}\\
    %         \pi(x) &\mapsto (f(x))_{f\in \mathcal{E}_{\operatorname{rat}}(X,T)}.
    % \end{align*}

    % Note that $\pi(x)=\pi(y)$ if and only if $\phi(\pi(x))=\phi(\pi(y))$. Therefore, $\phi$ is an isomorphism between $X_{\operatorname{rat}}$ and $\phi(X_{\operatorname{rat}})$. Consequently, it is enough to prove that $Z$ is equicontinuous. 
    
    % Let $(y_f)_{f\in \mathcal{E}_{\operatorname{rat}}(X,T)},(z_f)_{f\in \mathcal{E}_{\operatorname{rat}}(X,T)} \in Z$. For $f\in \mathcal{E}_{\operatorname{rat}}(X,T)$ and $n\in \Z$, we have\begin{align*}
    %     |e(n\alpha_f)y_f-e(n\alpha_f)z_f|= |y_f-z_f|.
    % \end{align*}

    % Hence, we deduce that $(Z,S)$ is equicontinuous.

    Now, suppose that $(X,T)$ is transitive. We will show that $(X_{\operatorname{rat}},T)$ is isomorphic to an adding machine. Let $x_0\in X$ be a transitive point, and set\begin{align*}
        \Lambda = \{\alpha\in\Q: Tf = e(\alpha)f \text{ for some nonzero }f\in \mathcal{E}_{\operatorname{rat}}(X,T)\} = \{\alpha_1,\alpha_2,\dots\}.
    \end{align*}

    Note that, for $f\in \mathcal{E}_{\operatorname{rat}}(X,T)$, we have $|Tf|=|f|$. Therefore, since $x_0$ is a transitive point and $f$ is continuous, $|f|$ is constant. In particular, $f(x)\neq 0$ for every $x\in X$.
    
    For $\alpha \in \Lambda$, choose  $f_\alpha\in \mathcal{E}_{\operatorname{rat}}(X,T)$ such that $f_\alpha(x_0)=1$ and $Tf_\alpha = e(\alpha)f_\alpha$. Since $x_0$ is a transitive point, we have $f_\alpha(X) = \langle e(\alpha)\rangle$. 

    Set $P = \prod_{\alpha\in \Lambda} \langle e(\alpha)\rangle$ and define\begin{align*}
        \psi: X&\to P \\ x &\mapsto (f_\alpha(x))_{\alpha\in \Lambda}.
    \end{align*}

    Let $f,g\in \mathcal{E}_{\operatorname{rat}}(X,T)$ such that $Tf = e(\alpha)f$ and $Tg = e(\alpha)g$ for some $\alpha\in\Q$. Since $g(x)\neq 0$ for all $x\in X$, it follows that $f/g$ is continuous and invariant. Consequently, $f/g$ is constant. Therefore, $f=cg$ for some $c\in \C$. Thus, every continuous eigenfunction associated with $e(\alpha)$ is a scalar multiple of $f_\alpha$. Hence, we deduce that $\psi(x)=\psi(y)$ if and only if $(x,y)\in \Krat(X,T)$.

    Since $x_0$ is a transitive point, we have that $\psi(X)= \overline{\{(e(n\alpha))_{\alpha\in \Lambda}: n\in\Z\}}$. Therefore, $X_{\operatorname{rat}}$ is isomorphic to $Y$, where $Y=\psi(X)$.

    For each $i\in \N$, let $r_i = \operatorname{lcm}(\operatorname{ord}(e(\alpha_1),\dots,\operatorname{ord}(e(\alpha_i)))$. Then $r_i\mid r_{i+1}$. Let $Y_i = \{(e(n\alpha_1),\dots,e(n\alpha_i)):n\in\Z\}$ be the projection of $Y$ onto its first $i$ coordinates. The map $\pi_i: \Z/r_i\Z\to Y_i$ given by $\pi_i(n)=(e(n\alpha_1),\dots,e(n\alpha_i))$ is a group isomorphism. Thus, we conclude that $Y$ is isomorphic to $\varprojlim \Z/r_i \Z$. Hence, $(X_{\operatorname{rat}},T)$ is isomorphic to an adding machine.
\end{proof}

We say that $(X,T)$ has \emph{continuous discrete rational spectrum} if $C(X)$ is the closed space spanned by all $f\in \mathcal{E}_{\operatorname{rat}}(X,T)$.

\begin{proposition}
    Let $(X,T)$ be a system. Then $X_{\operatorname{rat}}$ is the maximal factor of $(X,T)$ having continuous discrete rational spectrum.
\end{proposition}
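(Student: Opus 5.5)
The statement has two parts: $X_{\operatorname{rat}}$ has continuous discrete rational spectrum, and every factor of $(X,T)$ with that property factors through $X_{\operatorname{rat}}$. Throughout, let $\pi\colon X\to X_{\operatorname{rat}}$ denote the quotient map.

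\textbf{Step 1: $X_{\operatorname{rat}}$ has continuous discrete rational spectrum.} Each $f\in\mathcal{E}_{\operatorname{rat}}(X,T)$ is constant on $\Krat(X,T)$-classes by the definition of $\Krat$. Hence $f=\tilde f\circ\pi$ for a unique function $\tilde f$ on $X_{\operatorname{rat}}$. Since $\pi$ is a closed quotient map between compact metric spaces, $\tilde f$ is continuous. Moreover, $\tilde f(T\pi x)=f(Tx)=e(\alpha)f(x)$, so $\tilde f\in\mathcal{E}_{\operatorname{rat}}(X_{\operatorname{rat}},T)$.

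Let $\mathcal{A}$ be the set of all such lifts $\tilde f$. I will apply Stone--Weierstrass to the closed linear span of $\mathcal{A}$, checking four things.
\begin{itemize}
\item \emph{Separation.} If $\pi(x)\neq\pi(y)$, then by definition of $\Krat$ some $f\in\mathcal{E}_{\operatorname{rat}}$ has $f(x)\neq f(y)$, so $\tilde f$ separates $\pi(x)$ and $\pi(y)$.
\item \emph{Constants.} The constant functions lie in $\mathcal{E}_{\operatorname{rat}}$, taking $\alpha=0$.
\item \emph{Products.} If $Tf=e(\alpha)f$ and $Tg=e(\beta)g$, then $T(fg)=e(\alpha+\beta)fg$ with $\alpha+\beta\in\Q$.
\item \emph{Conjugates.} $T\bar f=e(-\alpha)\bar f$.
\end{itemize}
Products and conjugates need no transitivity assumption. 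So the span of $\mathcal{A}$ is a unital self-adjoint subalgebra of $C(X_{\operatorname{rat}})$ that separates points, and Stone--Weierstrass makes it dense. Hence $C(X_{\operatorname{rat}})$ is the closed span of $\mathcal{E}_{\operatorname{rat}}(X_{\operatorname{rat}},T)$.

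\textbf{Step 2: maximality.} Let $\phi\colon(X,T)\to(Z,T)$ be a factor such that $C(Z)$ is the closed span of $\mathcal{E}_{\operatorname{rat}}(Z,T)$. I will show that $\Krat(X,T)\subseteq R_\phi=\{(x,y):\phi(x)=\phi(y)\}$. Once this holds, $\phi$ factors continuously through $\pi$: the induced map is well defined by the inclusion and continuous by the quotient argument of Step 1.

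Take $(x,y)\in\Krat(X,T)$ and $g\in\mathcal{E}_{\operatorname{rat}}(Z,T)$. Then $g\circ\phi\in\mathcal{E}_{\operatorname{rat}}(X,T)$ with the same eigenvalue, so $g(\phi x)=g(\phi y)$. This equality passes to finite linear combinations of such $g$ and then to uniform limits, hence holds for every $g\in C(Z)$. Since continuous functions separate points of the compact metric space $Z$, we get $\phi(x)=\phi(y)$.

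The argument is routine. The only points needing care are the continuity of the induced maps on the quotient, which follows from compactness, and the closure properties used in Stone--Weierstrass, which were checked above. I do not expect a genuine obstacle.
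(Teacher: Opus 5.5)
Your proposal is correct and follows the same route as the paper. For the first part, you lift the rational eigenfunctions to $X_{\operatorname{rat}}$ and apply Stone--Weierstrass via point separation; for maximality, you compose eigenfunctions of the factor with $\phi$ and pass to the closed span to get $\Krat(X,T)$ inside the kernel relation of $\phi$. You also check explicitly that the span is a unital, conjugation-closed algebra, a Stone--Weierstrass hypothesis the paper leaves implicit.
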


\begin{proof}
    We first show that $X_{\operatorname{rat}}$ has continuous discrete rational spectrum. Let $\pi: X\to X_{\operatorname{rat}}$ be the quotient map. Note that, for each $f\in \mathcal{E}_{\operatorname{rat}}(X,T)$, there exists a $g_f\in C(X_{\operatorname{rat}})$ such that $f=g_f \circ \pi$. By the definition of $\Krat(X,T)$, if $\pi(x)\neq\pi(y)$, then there exists $f\in \mathcal{E}_{\operatorname{rat}}(X,T)$ such that $g_f(\pi(x))\neq g_f(\pi(y))$. Therefore, by Stone–Weierstrass, we conclude that $X_{\operatorname{rat}}$ has continuous discrete rational spectrum.

    Now, we show that $X_{\operatorname{rat}}$ is the maximal factor having continuous discrete rational spectrum. Let $\phi: X\to Y$ be a factor map such that $(Y,T)$ has continuous discrete rational spectrum.
    
    Let $(x,y)\in \Krat(X,T)$. For every $f\in \mathcal{E}_{\operatorname{rat}}(Y,T)$, we have $f \circ \phi \in \mathcal{E}_{\operatorname{rat}}(X,T)$. Therefore, $f(\phi(x))=f(\phi(y))$ for each $f\in \mathcal{E}_{\operatorname{rat}}(Y,T)$. Since $(Y,T)$ has continuous discrete rational spectrum, it follows that $f(\phi(x))=f(\phi(y))$ for each $f\in C(Y)$. Hence, we deduce that $\phi(x)=\phi(y)$. Therefore, we conclude that $Y$ is a factor of $X_{\operatorname{rat}}$.
\end{proof}

For a minimal system, the topological rational Kronecker factor introduced in \cite{Glasscock_Koutsogiannis_Richter_mult_comb_return:2019} is defined as the maximal factor that is an adding machine. Every adding machine has continuous discrete rational spectrum. On the other hand, by the preceding results, our factor $X_{\operatorname{rat}}$ is an adding machine and is maximal among the factors having continuous discrete rational spectrum. It follows that the two definitions of $X_{\operatorname{rat}}$ coincide for minimal systems.

For a minimal system $(X,T)$, the definition in \cite{Glasscock_Koutsogiannis_Richter_mult_comb_return:2019} implies that $X_{\operatorname{rat}}$ is trivial if and only if $(X,T)$ is totally minimal, meaning that $(X,T^n)$ is minimal for every $n\in\Z\setminus\{0\}$. It is therefore natural to ask whether the analogous statement holds for transitive systems: is $X_{\operatorname{rat}}$ trivial if and only if $T$ is totally transitive, meaning that $(X,T^n)$ is transitive for every $n\in\Z\setminus\{0\}$? The implication from triviality of $X_{\operatorname{rat}}$ to total transitivity is false. Indeed, let $X=\Z\cup\{\infty\}$ be the one-point compactification of $\Z$, and define $T$ by $T(n)=n+1$ for $n\in\Z$ and $T(\infty)=\infty$. The point $0$ is transitive. However, $T^2$ is not transitive, since every $T^2$-orbit contained in $\Z$ consists entirely of either even or odd integers. Let $f\in \mathcal{E}_{\operatorname{rat}}(X,T)$ and $\alpha\in\Q$ such that $Tf=e(\alpha)f$. Note that \begin{align*}
    e(\alpha) f(\infty)=Tf(\infty) = f(T(\infty))=f(\infty).
\end{align*}

As before, we have $f(x)\neq 0$ for all $x\in X$. Hence, we have $e(\alpha)=1$. Therefore, $f$ is invariant. By transitivity, we obtain that $f$ is constant. Thus, every $f\in \mathcal{E}_{\operatorname{rat}}(X,T)$ is constant, and hence $X_{\operatorname{rat}}$ is trivial.

Nevertheless, the converse implication remains valid: every totally transitive system has a trivial topological rational Kronecker factor.

\begin{lemma}
    Let $(X,T)$ be a system. If $T$ is totally transitive, then $\Krat(X,T)=X\times X$.
\end{lemma}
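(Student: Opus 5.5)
The plan is to show that every $f\in\mathcal{E}_{\operatorname{rat}}(X,T)$ is constant. This is enough, since then $f(x)=f(y)$ holds trivially for all $x,y$, and so $\Krat(X,T)=X\times X$.

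Fix $f\in \mathcal{E}_{\operatorname{rat}}(X,T)$ with $Tf=e(\alpha)f$, and write $\alpha=a/q$ with $q\geq 1$. First I compute the action of $T^q$:
\begin{align*}
    T^qf = e(q\alpha)f = f,
\end{align*}
so $f$ is $T^q$-invariant, that is, $f\circ T^q=f$. Since $T$ is totally transitive, $T^q$ is transitive, and I pick a point $x_0$ with dense $T^q$-orbit. Then $f$ is constant on $\{T^{qn}x_0:n\in\Z\}$, taking the value $f(x_0)$ there. By continuity of $f$ and density of this orbit, $f\equiv f(x_0)$ on $X$.

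The only thing to check is that the definition of $\mathcal{E}_{\operatorname{rat}}$ allows $f=0$. That case is trivially constant, so nothing changes. I do not expect any step to be a real obstacle. The key point is simply that a rational eigenvalue becomes trivial for a suitable power of $T$, so the transitivity of that single power $T^q$ forces $f$ to be constant.
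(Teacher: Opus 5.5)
Your proof is correct and is essentially the paper's argument: a rational eigenvalue $e(a/q)$ makes $f$ invariant under $T^q$, and then a dense $T^q$-orbit, which total transitivity provides, together with continuity forces $f$ to be constant. The paper phrases this with a dense G$_\delta$ set of $T^q$-transitive points and limits $T^{qn_i}x\to y$, but a single point with dense $T^q$-orbit, as you use, already suffices.
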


\begin{proof}
    Let $f\in \mathcal{E}_{\operatorname{rat}}(X,T)$. Then there exists $q\in \N$ such that $T^{q}f=f$. Since $T$ is totally transitive, there exists a dense G$_\delta$ subset $\Omega$ of $X$ such that \begin{align*}
        \overline{\{T^{qn}x:n\in\Z\}} =X
    \end{align*}

    for each $x\in \Omega$. Let $x\in \Omega$ and $y\in X$. There exists a sequence $(n_i)_{i\in\N}$ such that $T^{qn_i}x\to y$. Therefore,\begin{align*}
        f(x) = T^{qn_i} f(x) \to f(y).
    \end{align*}

    Consequently, $f(x)=f(y)$. Since $\Omega$ is dense and $f$ is continuous, we conclude that $f$ is constant. 
\end{proof}

Using the preceding lemma and assuming \cref{thm: TFC_distinct_deg}, we obtain the following result, which provides a partial answer to {\cite[Problem 8]{Koutsogiannis_Kuca_Sun_2_nil_avr_distinct_degree:2026}} for commuting transformations and distinct-degree polynomials.

\begin{corollary}
    Let $(X,S_1,\dots,S_k)$ be a minimal $\Z^k$-system, let $T_1,\dots,T_d\in \langle S_{1},\dots,S_{k}\rangle$ be transitive transformations, and $p_1,\dots,p_d$ be nonconstant integer polynomials with distinct degrees. If $T_i$ is totally transitive for each $i=1,\dots,d$, then there exists a dense G$_\delta$ subset $\Omega$ of $X$ such that \begin{align*}
        \{(T_1^{p_1(n)}x,\dots,T_d^{p_d(n)}x):n\in\Z\}
    \end{align*}

    is dense in $X^d$.
\end{corollary}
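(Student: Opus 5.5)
Since each $T_i$ is totally transitive, the preceding lemma gives $\Krat(X,T_i)=X\times X$ for every $i=1,\dots,d$. The corollary should then follow directly from \cref{thm: TFC_distinct_deg}. The only mismatch is the normalization $p_i(0)=0$: \cref{thm: TFC_distinct_deg} requires it, while the corollary does not.

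First reduce to $p_i(0)=0$. Write $p_i=\tilde p_i+c_i$ with $c_i=p_i(0)$, so that $\tilde p_1,\dots,\tilde p_d$ are nonconstant, have distinct degrees, and vanish at $0$. Then
\begin{align*}
  (T_1^{p_1(n)}x,\dots,T_d^{p_d(n)}x)=(T_1^{c_1}\times\dots\times T_d^{c_d})(T_1^{\tilde p_1(n)}x,\dots,T_d^{\tilde p_d(n)}x).
\end{align*}
The map $T_1^{c_1}\times\dots\times T_d^{c_d}$ is a homeomorphism of $X^d$, so it preserves density. It therefore suffices to treat the polynomials $\tilde p_i$.

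Now apply \cref{thm: TFC_distinct_deg} to $\tilde p_1,\dots,\tilde p_d$ and obtain the dense G$_\delta$ set $\Omega$. Take $x\in\Omega$ and arbitrary $x_1,\dots,x_d\in X$. For each $j$ we have $(x,x_j)\in X\times X=\Krat(X,T_1)$, so the hypothesis of the theorem holds with $i=1$. Hence $(x_1,\dots,x_d)$ lies in the orbit closure $\overline{\{(T_1^{\tilde p_1(n)}x,\dots,T_d^{\tilde p_d(n)}x):n\in\Z\}}$. As $(x_1,\dots,x_d)$ was arbitrary, this closure is all of $X^d$, which proves density for every $x\in\Omega$.

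There is no real obstacle: all of the work is in \cref{thm: TFC_distinct_deg} and the lemma on total transitivity. The only thing to check is the constant-term reduction above, and it is routine.
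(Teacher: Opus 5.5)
Your proposal is correct and follows the paper's route exactly: the paper obtains this corollary by combining the lemma that total transitivity forces $\Krat(X,T_i)=X\times X$ with \cref{thm: TFC_distinct_deg}. Your reduction to $p_i(0)=0$ via the homeomorphism $T_1^{c_1}\times\dots\times T_d^{c_d}$ is a valid detail that the paper leaves implicit, since \cref{thm: TFC_distinct_deg} assumes this normalization and the corollary does not.
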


\subsection{Proof of \cref{thm: TFC_distinct_deg}}

Before proving \cref{thm: TFC_distinct_deg}, we state the following result. It will be proved as part of the proof of \cref{thm: TFC_distinct_deg} and used in the next section.

\begin{proposition}
    Let $\pi\colon (X,S_1,\dots,S_k) \to (Y,S_1,\dots,S_k)$ be an open factor between minimal $\Z^k$-systems, and let $T_1,\dots,T_d\in \langle S_{1},\dots,S_{k}\rangle$ be transitive transformations. If $Y$ is an almost one-to-one extension of $X_\infty$, then for any open subsets $V_{0},V_{1},\dots,V_{d}$ of $X$ with $\cap_{i=0}^{d}\pi(V_i)\neq \emptyset$, and for any $p_1,\dots,p_d$ nonconstant integer distinct-degree polynomials with $p_{i}(0)=0$ for $i=1,\dots, d$, there exists some $n\in \Z$ such that\begin{align*}
        V_{0} \cap T_{1}^{-p_1(n)}V_{1} \cap \dots\cap T_{d}^{-p_d(n)}V_{d} \neq \emptyset.
    \end{align*}
\end{proposition}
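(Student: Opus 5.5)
The plan is to obtain the statement as a special case of \cref{thm: TFC_nice_pol} by encoding the distinct-degree polynomials as a very nice family of polynomial $d$-tuples. After relabeling, assume $\deg(p_1)>\deg(p_2)>\dots>\deg(p_d)\geq 1$. The relabeling is harmless: it only permutes the transformations $T_i$ and the open sets $V_i$, and the hypothesis $\cap_{i=0}^{d}\pi(V_i)\neq\emptyset$ is symmetric in $V_1,\dots,V_d$. Consider the family of $s=d$ tuples
\begin{align*}
\cP=\big((p_1,0,\dots,0),(0,p_2,0,\dots,0),\dots,(0,\dots,0,p_d)\big),
\end{align*}
that is, $p_{i,m}=p_i$ if $i=m$ and $p_{i,m}=0$ otherwise. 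For this family, $T_1^{p_{1,m}(n)}\cdots T_d^{p_{d,m}(n)}=T_m^{p_m(n)}$. Hence the conclusion of \cref{thm: TFC_nice_pol} applied to $V_0,\dots,V_d$ gives exactly $V_0\cap T_1^{-p_1(n)}V_1\cap\dots\cap T_d^{-p_d(n)}V_d\neq\emptyset$ for some $n$.

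The main step is to check that $\cP$ is very nice, which I would do by induction on $d$.

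\emph{Niceness.} Condition (1) holds since $\deg p_1\geq \deg p_{1,j}$ for all $j$, the other entries being $0$. Condition (2) holds since $\deg p_1>\deg p_i\geq \deg p_{i,j}$ for $i\geq 2$. For condition (3), take $j\geq 2$. Then $p_{1,1}-p_{1,j}=p_1$ has degree $\deg p_1$. On the other side, $p_{i,1}-p_{i,j}=-p_{i,j}$ is either $0$ or $-p_i$ (when $i=j$), and in both cases its degree is less than $\deg p_1$.

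\emph{Very niceness for $d=1$.} The family is the single tuple $(p_1)$. It is nice, and $\cP^{[1]}=\cP$.

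\emph{Very niceness for $d\geq 2$.} Here $p_{1,2}=0$, so $j_0=2$. We have $p_{1,k}=0$ for all $k\geq 2$ and $p_{2,2}=p_2\neq 0$. The range $2\leq r<j_0$ is empty, so there is nothing to check for the families $\cP^{[r]}$. The reduced family is
\begin{align*}
\big((p_2,0,\dots,0),\dots,(0,\dots,0,p_d)\big),
\end{align*}
which has the same diagonal form for the $d-1$ polynomials $p_2,\dots,p_d$ of strictly decreasing degrees. By the induction hypothesis it is very nice.

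The remaining hypotheses of \cref{thm: TFC_nice_pol} are immediate. We have $p_i(0)=0$, no tuple is identically constant, and no tuple is repeated, so the standing conventions on families are met. The factor map $\pi$ is open, and $Y$ is an almost one-to-one extension of $X_\infty$. Applying \cref{thm: TFC_nice_pol} therefore yields the required $n$.

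I expect no real obstacle here. The only points needing care are the reordering by decreasing degree and the recursive check of the very-nice condition, both handled above.
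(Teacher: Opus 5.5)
Your proposal is correct and follows the paper's own argument. The paper also orders the polynomials by decreasing degree, forms the diagonal family $\big((p_{j_1},0,\dots,0),\dots,(0,\dots,0,p_{j_d})\big)$, asserts that it is very nice, and applies \cref{thm: TFC_nice_pol}. You additionally carry out, by induction on $d$, the very-niceness check (including the recursive $j_0=2$ step) that the paper leaves to the reader.
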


\cref{thm: TFC_distinct_deg} is a consequence of \cref{thm: TFC_nice_pol}. Part of its proof closely follows the proof of {\cite[Theorem A]{Ye_Yu_polynomial_saturation:2025}}. We therefore provide only a brief sketch of that part and refer to \cite{Ye_Yu_polynomial_saturation:2025} whenever the required argument follows by a straightforward modification of the corresponding argument given there.

\begin{proof}[Proof of \cref{thm: TFC_distinct_deg}]
     Let $\pi:X\to X_{\infty}$ be the quotient map. By the O-diagram, we may assume without loss of generality that $\pi$ is open.
     
     Let $V_{0},V_{1},\dots,V_{d}$ be open subsets of $X$ with $\cap_{i=0}^{d}\pi(V_i)\neq \emptyset$. Let $j_{1},\dots,j_{d}$ be distinct numbers in $\{1,\dots,d\}$ such that \begin{align*}
        \deg(p_{j_1})> \deg(p_{j_2}) > \dots >\deg(p_{j_{d}}).
    \end{align*}

    Let $\cP_{1} = (p_{j_1},0,0,\dots,0),\cP_{2} = (0,p_{j_2},0,\dots,0),\dots,\cP_{d} =(0,0,\dots,0,p_{j_d})$.  Notice that $(\cP_1,\dots,\cP_d)$ is a very nice family of polynomial $d$-tuples. By \cref{thm: TFC_nice_pol}, there exists $n\in \Z$ such that \begin{align*}
        V_{0} \cap T_{j_1}^{-p_{j_1}(n)} V_{j_1} \cap \dots\cap T_{j_d}^{-p_{j_d}(n)}V_{j_d} \neq \emptyset.
    \end{align*}

    By the proof of {\cite[Lemma 3.2]{Ye_Yu_polynomial_saturation:2025}}, there exists a dense G$_\delta$ subset $\Omega_1$ of $X$ such that for any $x\in \Omega_1$,\begin{align*}
        (\pi^{(d)})^{-1}(\pi^{(d)}(x^{(d)}))\subseteq \overline{\{ (T_1^{p_1(n)}x,\dots,T_{d}^{p_d(n)}x):n\in\Z\}}.
    \end{align*}

    Let $\mu$ be the unique ergodic invariant probability measure on $X_{\infty}$. Since the polynomials $p_1,\dots,p_d$ have distinct degrees, they are linearly independent. Hence, by {\cite[Theorem 2.10]{Frantzikinakis_Kuca_joint_erg_comm_poly:2025}}, for all $f_1,\dots,f_d\in L^{\infty}(\mu)$, we have\begin{align*}
        \lim_{N\to \infty}\left\|\dfrac{1}{N}\sum_{n=1}^{N} \prod_{i=1}^{d} T_{i}^{p_{i}(n)}f_i - \dfrac{1}{N}\sum_{n=1}^{N} \prod_{i=1}^{d} T_{i}^{p_{i}(n)}\mathbb{E}(f_i\mid \mathcal{K}_{\operatorname{rat}}(T_i)) \right\|_{L^2(\mu)}=0.
    \end{align*}

    Since $\RP^{[m]}(X,T_i)=\RP^{[m]}(X,T_j)$ for all $m\in\N$ and $i,j\in\{1,\dots,d\}$, we have $\Krat(X,T_i)=\Krat(X,T_j)$. Let $\pi_{\infty}: X_{\infty} \to X_{\operatorname{rat}} = X/\Krat(X,T_1)$ be the factor map. Since $X_{\infty}$ is a pro-nilsystem, we have $X_{\operatorname{rat}}=Z_{\operatorname{rat}}$. Therefore, by the same proof as {\cite[Theorem 3.7]{Ye_Yu_polynomial_saturation:2025}}, we obtain a dense G$_\delta$ subset $\Omega_2$ of $X_\infty$ such that for any $x\in \Omega_2$,\begin{align*}
        (\pi_{\infty}^{(d)})^{-1}(\pi_{\infty}^{(d)}(x^{(d)}))\subseteq \overline{\{ (T_1^{p_1(n)}x,\dots,T_{d}^{p_d(n)}x):n\in\Z\}}.
    \end{align*}

    Let $\Omega_{3}$ be the dense G$_\delta$ subset of $X_\infty$ consisting of the continuity points of the map $y\mapsto \pi^{-1}(y)$. Moreover, there exists a dense G$_\delta$ subset $\Omega_4$ of $X_{\operatorname{rat}}$ such that, for each $y\in \Omega_4$, $\pi_{\infty}^{-1}(y)\cap \Omega_{1} \cap \pi^{-1}(\Omega_2 \cap \Omega_3)$ is a dense G$_\delta$ subset of $\pi_{\infty}^{-1}(y)$. The existence of $\Omega_3$ and $\Omega_4$ follows from {\cite[Lemma 2.7]{Ye_Yu_polynomial_saturation:2025}} and {\cite[Proposition 3.1]{Veech_point-distal_flows:1970}} respectively. Finally, following the proof of {\cite[Theorem A]{Ye_Yu_polynomial_saturation:2025}}, we conclude that $\Omega = \pi_{\operatorname{rat}}^{-1}(\Omega_4)\cap \Omega_{1} \cap \pi^{-1}(\Omega_2 \cap \Omega_3)$ is the required set, where $\pi_{\operatorname{rat}}: X\to X_{\operatorname{rat}}$ is the quotient map.
\end{proof}

\section{Joint transitivity}

In this section, we prove \cref{thm:B}. First, we state a characterization of joint transitivity that will be used.

\begin{lemma}[{\cite[Lemma 2.1]{Donoso_Koutsogiannis_Sun_joint_transitivity:2025}}]
    Let $(X,S_1,\dots,S_k)$ be a minimal $\Z^k$-system and $(a_1(n))_n,\dots,$ $(a_d(n))_n$ be sequences with values in $\Z^k$. The following are equivalent:\begin{enumerate}
        \item There exists a dense G$_\delta$ subset $\Omega$ of $X$ such that the set\begin{align*}
            \{(S_{a_1(n)}x,\dots,S_{a_d(n)}x)\colon n\in\Z\}
        \end{align*}

        is dense in $X^d$ for every $x\in \Omega$.

        \item For every collection of nonempty open subsets $V_0,V_1,\dots,V_d$ of $X$, there is some $n\in \Z$ such that\begin{align*}
            V_0 \cap S_{-a_1(n)}V_1 \cap \dots \cap S_{-a_d(n)}V_d \neq \emptyset.
        \end{align*}
    \end{enumerate}
\end{lemma}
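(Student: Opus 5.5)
The plan is a standard Baire category argument, proving the two implications separately; minimality of the $\Z^k$-system is not actually needed. Throughout we use that $S_{-a}=S_a^{-1}$, so $y\in S_{-a}V$ if and only if $S_a y\in V$.

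For $(1)\Rightarrow(2)$, fix nonempty open sets $V_0,V_1,\dots,V_d$. Since $\Omega$ is dense, we can choose $x\in\Omega\cap V_0$. The set $V_1\times\dots\times V_d$ is a nonempty open subset of $X^d$. By density of the orbit $\{(S_{a_1(n)}x,\dots,S_{a_d(n)}x):n\in\Z\}$ there is $n\in\Z$ with $S_{a_i(n)}x\in V_i$ for each $i$. Then $x\in V_0\cap S_{-a_1(n)}V_1\cap\dots\cap S_{-a_d(n)}V_d$, so this intersection is nonempty.

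For $(2)\Rightarrow(1)$, fix a countable base $\{U_j\}_{j\in\N}$ of nonempty open sets of $X$, which exists since $X$ is compact metric. For each tuple $\mathbf{j}=(j_1,\dots,j_d)\in\N^d$, set
\begin{align*}
    A_{\mathbf{j}}=\bigcup_{n\in\Z}\, S_{-a_1(n)}U_{j_1}\cap\dots\cap S_{-a_d(n)}U_{j_d}.
\end{align*}
Each $A_{\mathbf{j}}$ is open, because every $S_{-a_i(n)}$ is a homeomorphism. It is also dense: given any nonempty open $V_0$, applying $(2)$ to $V_0,U_{j_1},\dots,U_{j_d}$ gives some $n$ with $V_0\cap A_{\mathbf{j}}\neq\emptyset$.

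Let $\Omega=\bigcap_{\mathbf{j}\in\N^d}A_{\mathbf{j}}$. This is a countable intersection of dense open sets, so by the Baire category theorem it is a dense G$_\delta$ subset of $X$. Now take $x\in\Omega$. The products $U_{j_1}\times\dots\times U_{j_d}$ form a base for the topology of $X^d$. For each such product, membership $x\in A_{\mathbf{j}}$ gives some $n$ with $(S_{a_1(n)}x,\dots,S_{a_d(n)}x)\in U_{j_1}\times\dots\times U_{j_d}$. Hence the orbit of $x$ meets every basic open set, and so it is dense in $X^d$.

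There is no real obstacle here. The only points needing care are the sign convention $S_{-a}V=S_a^{-1}V$, so that membership in the intersection translates exactly into $S_{a_i(n)}x\in V_i$, and the observation that a countable base of $X$ yields a countable base of $X^d$ consisting of products, which is what makes the Baire argument go through.
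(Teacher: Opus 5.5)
Your proof is correct. It is the standard Baire category argument (a countable base of $X$, the open dense sets $A_{\mathbf{j}}$, and their intersection), which is essentially how \cite[Lemma 2.1]{Donoso_Koutsogiannis_Sun_joint_transitivity:2025} is established; the paper itself only quotes the lemma without proof. Your remark that minimality plays no role is also accurate, since the argument uses only the Baire property of the compact metric space $X$ and the fact that each $S_{a}$ is a homeomorphism.
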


We first prove the nontrivial implication in the case where the system is, up to an almost one-to-one extension, a pro-nilsystem. For this, we use the following characterization of joint ergodicity.

\begin{theorem}[{\cite{Frantzikinakis_Kuca_joint_erg_comm_poly:2025}}]\label{thm: joint_ergodic}
    Let $(X,\mu,T_1,\dots,T_d)$ be a measure preserving system such that $T_1,\dots,T_d$ are invertible, commuting, and ergodic transformations, and let $p_1,\dots,p_d$ be nonconstant integer polynomials with distinct degrees and $p_i(0)=0$. Then $(T_1^{p_1(n)})_{n},\dots,(T_d^{p_d(n)})_{n}$ are jointly ergodic if and only if $(T_1^{p_1(n)}\times \dots\times T_{d}^{p_{d}(n)})_{n}$ is ergodic for $\mu\times\dots\times\mu$.
\end{theorem}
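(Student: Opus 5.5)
The plan is to show that both conditions in the statement are equivalent to a single spectral condition on rational eigenvalues. Say that $(\alpha_1,\dots,\alpha_d)\in\Q^d$ is \emph{admissible} if each $e(\alpha_i)$ is an eigenvalue of $T_i$ on $L^2(\mu)$ and
\begin{align*}
\lim_{N\to\infty}\dfrac{1}{N}\sum_{n=1}^{N} e\Big(\sum_{i=1}^d \alpha_i p_i(n)\Big)\neq 0 .
\end{align*}
Since $\sum_i\alpha_i p_i(n)$ is a rational polynomial, this average converges by Weyl's theorem. I will show that each side of the equivalence holds if and only if the only admissible tuple has $e(\alpha_1)=\dots=e(\alpha_d)=1$.

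\textbf{Step 1 (joint ergodicity $\Rightarrow$ the spectral condition).} Let $(\alpha_1,\dots,\alpha_d)$ be admissible and choose $f_i$ with $T_if_i=e(\alpha_i)f_i$ and $\|f_i\|_\infty=1$. Ergodicity of $T_i$ gives $|f_i|$ constant, and $\int f_i\,d\mu=0$ unless $e(\alpha_i)=1$. The multiple average is then the product $\prod_i f_i$ multiplied by the scalar average above. If some $e(\alpha_i)\neq 1$, joint ergodicity forces this to tend to $\prod_i\int f_i\,d\mu=0$. But $\prod_i f_i$ is not the zero function and the scalar limit is nonzero, which is a contradiction.

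\textbf{Step 2 (the spectral condition $\Rightarrow$ joint ergodicity).} By \cite[Theorem 2.10]{Frantzikinakis_Kuca_joint_erg_comm_poly:2025}, the difference between the multiple average and the same average with each $f_i$ replaced by $\mathbb{E}(f_i\mid \mathcal K_{\operatorname{rat}}(T_i))$ tends to $0$. Using multilinearity and the uniform bound $\|\cdot\|_\infty$, I then approximate each conditional expectation in $L^2$ by a finite linear combination of rational eigenfunctions of $T_i$. For a tuple of such eigenfunctions the average equals $\prod_i f_i$ times the scalar average. That scalar average tends to $0$ unless the tuple is admissible, and admissible tuples are trivial, hence correspond to constants. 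This yields the limit $\prod_i\int f_i\,d\mu$.

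\textbf{Step 3 (product ergodicity $\Leftrightarrow$ the spectral condition).} Consider on $L^2(\mu^{\otimes d})$ the commuting unitaries $U_i$, where $U_i$ acts by $T_i$ on the $i$-th coordinate only. Apply the spectral theorem to the $\Z^d$-action they generate. For $F$ with joint spectral measure $\nu_F$ on $\mathbb T^d$,
\begin{align*}
\Big\|\dfrac1N\sum_{n=1}^N U_1^{p_1(n)}\cdots U_d^{p_d(n)}F\Big\|^2=\int \Big|\dfrac1N\sum_{n=1}^N e\Big(\sum_i p_i(n)\theta_i\Big)\Big|^2\,d\nu_F(\theta).
\end{align*}
By Weyl's equidistribution theorem the integrand tends to $0$ unless every coefficient of $\sum_i\theta_ip_i$ of positive degree is rational. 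The distinct-degree hypothesis is used here: it should force each $\theta_i$ to be rational, since the top-degree coefficient of each $p_i$ involves a different $\theta_i$, and one can peel the degrees off one by one. So the limit is the projection onto the span of joint eigenfunctions with rational admissible eigenvalues. The joint eigenfunctions of this product action are spanned by tensors $f_1\otimes\cdots\otimes f_d$ with $f_i$ an eigenfunction of $T_i$; this follows because the joint point spectrum of a tensor product is the product of point spectra, and each $T_i$ is ergodic. Hence ergodicity of $(T_1^{p_1(n)}\times\dots\times T_d^{p_d(n)})_n$ holds exactly when the only admissible tuple is trivial. Combining Steps 1–3 proves the theorem.

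The main obstacle is Step 3. Two points need care there: justifying that the limit is exactly the projection onto the rational admissible eigenspace, including the peeling argument that shows each $\theta_i$ must be rational; and identifying that eigenspace with the span of elementary tensors of eigenfunctions. I would first try the spectral-measure computation above. For the tensor identification I would restrict to the discrete-spectrum part of each $L^2(\mu)$, noting that continuous spectral parts of any factor give $\nu_F$ no atoms and therefore contribute nothing.
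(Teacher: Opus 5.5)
The paper gives no proof of this statement: it is quoted from \cite{Frantzikinakis_Kuca_joint_erg_comm_poly:2025} and used as a black box in \cref{lemma: nil_case}. Your argument is a correct derivation of it from the characteristic-factor theorem \cite[Theorem 2.10]{Frantzikinakis_Kuca_joint_erg_comm_poly:2025}, which the paper itself invokes in the proof of \cref{thm: TFC_distinct_deg}, combined with a spectral computation. This mirrors the standard route: characteristic-factor control plus an equidistribution condition on rational eigenvalues.

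Your route shows where the depth lies. The implication from joint ergodicity to ergodicity of the product sequence uses only Steps 1 and 3 (eigenfunctions, the spectral theorem and Weyl). Only the converse needs \cite[Theorem 2.10]{Frantzikinakis_Kuca_joint_erg_comm_poly:2025}. In Step 3 the distinct-degree hypothesis enters only through linear independence of $p_1,\dots,p_d$, so the argument applies verbatim to linearly independent families.

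One statement in Step 3 should be corrected, though the conclusion survives. The strong limit of the averages is not a projection but the operator $\sum_{\theta}c(\theta)P_\theta$. Here $P_\theta$ projects onto the joint eigenspace with eigenvalue $\theta\in(\Q/\Z)^d$, and $c(\theta)=\lim_{N}\frac1N\sum_{n=1}^{N}e(\sum_i\theta_ip_i(n))$ may have modulus strictly between $0$ and $1$. The $P_\theta$ are mutually orthogonal and $c(0)=1$. Also, $P_0$ is the projection onto constants, since the $\Z^d$-action generated by the coordinatewise $T_i$ is ergodic because each $T_i$ is. Hence this operator equals $F\mapsto\int F$ exactly when $c(\theta)=0$ for every nonzero $\theta$ with $P_\theta\neq 0$, which is your admissibility condition.

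In Step 2, replace the conditional expectations by finite eigenfunction combinations one coordinate at a time. Then the $L^\infty$ bounds of the approximants already chosen can be absorbed into the next $L^2$ tolerance; these bounds are finite because eigenfunctions of an ergodic $T_i$ have constant modulus.
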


\begin{lemma}\label{lemma: nil_case}
    Let $(X,S_1,\dots,S_k)$ be a minimal $\Z^k$-system which is an almost one-to-one extension of a pro-nilsystem, let $T_1,\dots,T_d\in\langle S_1,\dots,S_k \rangle$ be transitive transformations, and let $p_1,\dots,p_d$ be nonconstant integer polynomials with distinct degrees and $p_i(0)=0$ for $i=1,\dots,d$.  If $(T_1^{p_1(n)}\times \dots\times T_{d}^{p_{d}(n)})_{n}$ is transitive on $X^d$, then $(T_1^{p_1(n)})_{n},\dots,(T_d^{p_d(n)})_{n}$ are jointly transitive.
\end{lemma}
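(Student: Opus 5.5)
Let $\phi\colon X\to Z$ be the almost one-to-one factor map onto the pro-nilsystem $Z$. Since $Z$ is a minimal pro-nilsystem, it is uniquely ergodic; let $\mu$ be its Haar measure. The plan is to reduce to a measure-theoretic statement on $Z$, use \cref{thm: joint_ergodic} there, and lift back to $X$ using almost one-to-one-ness. By the criterion of {\cite[Lemma 2.1]{Donoso_Koutsogiannis_Sun_joint_transitivity:2025}}, it suffices to show that for nonempty open $V_0,\dots,V_d\subseteq X$ there is $n$ with $V_0\cap T_1^{-p_1(n)}V_1\cap\dots\cap T_d^{-p_d(n)}V_d\neq\emptyset$.

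\textbf{Step 1: descend to $Z$.} Transitivity of $T_1^{p_1(n)}\times\dots\times T_d^{p_d(n)}$ on $X^d$ passes to the factor $Z^d$. On a pro-nilsystem, each $T_i$ acts by a niltranslation, so $(T_1^{p_1(n)}\times\dots\times T_d^{p_d(n)})_n$ is a polynomial sequence on the inverse limit of nilmanifolds $Z^d$. By the equivalence recalled in the background (a polynomial sequence on a nilmanifold is transitive iff it is ergodic for Haar measure), applied at each finite level of the inverse limit, it is ergodic for $\mu^{\otimes d}$. Similarly each $T_i$ is transitive on $Z$, hence ergodic for $\mu$. By \cref{thm: joint_ergodic}, $(T_1^{p_1(n)})_n,\dots,(T_d^{p_d(n)})_n$ are jointly ergodic on $(Z,\mu)$.

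\textbf{Step 2: lift to $X$.} For nonempty open $V_i\subseteq X$, almost one-to-one-ness gives nonempty open $U_i\subseteq Z$ with $\phi^{-1}(U_i)\subseteq V_i$. Concretely, take $x_i\in V_i$ with singleton fiber, and use upper semicontinuity of $z\mapsto\phi^{-1}(z)$ at $\phi(x_i)$. Since $\mu$ has full support, $\mu(U_i)>0$. Applying joint ergodicity to $f_0=1_{U_0}$ and $f_i=1_{U_i}$ gives
\[\lim_N \frac1N\sum_{n=1}^N\mu\bigl(U_0\cap T_1^{-p_1(n)}U_1\cap\dots\cap T_d^{-p_d(n)}U_d\bigr)=\prod_{i=0}^d\mu(U_i)>0.\]
So some $n$ yields a nonempty intersection in $Z$. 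Pulling back by $\phi$, which commutes with the $T_i$, we get $\phi^{-1}(U_0)\cap\bigcap_i T_i^{-p_i(n)}\phi^{-1}(U_i)\neq\emptyset$, and this set is contained in the desired intersection of the $V_i$.

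\textbf{Main obstacle.} The delicate point is Step 1: justifying that transitivity of the product polynomial sequence on the pro-nilsystem $Z^d$ implies ergodicity for $\mu^{\otimes d}$. One must check that the finite-level equivalence (Leibman's criterion) passes to inverse limits, and that the $T_i$, as elements of $\langle S_1,\dots,S_k\rangle$, indeed act as translations on each nilmanifold level. I would handle this by noting that ergodicity on the inverse limit is equivalent to ergodicity on each level, and that transitivity on the limit implies transitivity on each factor level. The rest is routine.
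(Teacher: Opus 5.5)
Your proposal is correct and follows the paper's argument: transitivity on the pro-nilsystem gives ergodicity, the Frantzikinakis--Kuca criterion gives joint ergodicity, and full support of the Haar measure makes the limit of $\mu\bigl(U_0\cap T_1^{-p_1(n)}U_1\cap\dots\cap T_d^{-p_d(n)}U_d\bigr)$ positive, which produces the required $n$. The only difference is that you prove the reduction from the almost one-to-one extension $X$ to the pro-nilsystem $Z$, using the open-set criterion and the sets $U_i=Z\setminus\phi(X\setminus V_i)$, and you justify the passage of the nilmanifold transitivity/ergodicity equivalence to inverse limits; the paper simply asserts both points.
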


\begin{proof}
    It is enough to prove the result in the case where $X$ is a pro-nilsystem. Thus, we may assume from now on that $X$ is a pro-nilsystem. Let $V_0,V_1,\dots,V_d$ be nonempty open subsets of $X$.

    Since $X$ is a pro-nilsystem and the transformations $T_1,\dots,T_d$ and the sequence $(T_1^{p_1(n)}\times \dots\times T_{d}^{p_{d}(n)})_{n}$ are transitive, it follows that $T_1,\dots,T_d$ are ergodic and $(T_1^{p_1(n)}\times \dots\times T_{d}^{p_{d}(n)})_{n}$ is ergodic for $\mu\times\dots\times\mu$, where $\mu$ is the Haar measure on $X$. Therefore, by \cref{thm: joint_ergodic}, $(T_1^{p_1(n)})_{n},\dots,(T_d^{p_d(n)})_{n}$ are jointly ergodic. Hence,\begin{align*}
        \lim_{N\to\infty}\dfrac{1}{N}\sum_{n=1}^{N} \mu(V_0 \cap T_{1}^{-p_{1}(n)}V_1 \cap\dots\cap T_{d}^{-p_d(n)}V_d) = \mu(V_0)\mu(V_1)\dots\mu(V_d). 
        \end{align*}

        Since $(X,S_1,\dots,S_k)$ is minimal, every nonempty open subset of $X$ has positive Haar measure. Thus the limit above is positive, and consequently there exists $n\in \N$ such that\begin{align*}
            V_0 \cap T_{1}^{-p_{1}(n)}V_1 \cap\dots\cap T_{d}^{-p_d(n)}V_d\neq \emptyset.
        \end{align*} 
\end{proof}

To pass from this case to an arbitrary minimal system, we need the following lemma.

\begin{lemma}\label{lemma: lift_transitive_almost_onetoone}
    Let $\pi\colon (X,S_1,\dots,S_k)\to (Y,S_1,\dots,S_k)$ be an almost one-to-one factor between minimal $\Z^k$-systems. Let $T_1,\dots,T_d\in \langle S_1,\dots,S_k\rangle$ and let $p_1,\dots,p_d\in \Z[t]$. Then $(T_1^{p_1(n)}\times \dots\times T_{d}^{p_{d}(n)})_{n}$ is transitive on $X^d$ if and only if $(T_1^{p_1(n)}\times \dots\times T_{d}^{p_{d}(n)})_{n}$ is transitive on $Y^d$.
\end{lemma}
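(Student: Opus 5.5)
The plan is to work with the characterization of transitivity of a sequence via open sets. For a sequence $(R_n)_n$ of homeomorphisms of a compact metric space $Z$, transitivity in the sense used here (a dense G$_\delta$ of points with dense orbit, or at least one point with dense orbit) is implied by, and in our setting equivalent to, the condition that for all nonempty open $U,V\subseteq Z$ there is $n$ with $U\cap R_n^{-1}V\neq\emptyset$. The forward direction of this equivalence is immediate from density of an orbit of a point in $U$. The converse is the usual Baire argument: the sets $\bigcup_n R_n^{-1}V$ are open and dense, and intersecting over a countable base gives a dense G$_\delta$ of points with dense orbit. I will apply this with $Z=X^d$ or $Y^d$ and $R_n=T_1^{p_1(n)}\times\dots\times T_d^{p_d(n)}$, and I will check which of the two formulations the paper's definition of transitivity of a sequence requires.

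\emph{($X^d$ transitive $\Rightarrow$ $Y^d$ transitive.)} Since $\pi^{(d)}=\pi\times\dots\times\pi$ is a continuous surjection commuting with $R_n$, it maps a dense orbit to a dense orbit. It also maps the open-set condition downwards, since preimages of open sets are open and nonempty.

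\emph{($Y^d$ transitive $\Rightarrow$ $X^d$ transitive.)} This is the step needing almost one-to-one. Take nonempty open $U=U_1\times\dots\times U_d$ and $V=V_1\times\dots\times V_d$ in $X^d$. For a minimal almost one-to-one extension, every nonempty open $W\subseteq X$ contains a fiber: there is a nonempty open $W'\subseteq Y$ with $\pi^{-1}(W')\subseteq W$. Indeed, take $x\in W$ with $\pi^{-1}\pi(x)=\{x\}$. Since $\pi$ is closed, the set $W'=Y\setminus\pi(X\setminus W)$ is open, it contains $\pi(x)$, and it satisfies $\pi^{-1}(W')\subseteq W$. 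Apply this to each $U_i$ and $V_i$ to get open $U_i',V_i'\subseteq Y$. By transitivity on $Y^d$ there are $n$ and $y\in\prod U_i'$ with $R_ny\in\prod V_i'$. Any lift $x\in(\pi^{(d)})^{-1}(y)$ then lies in $U$, and $R_nx$ lies over $R_ny$, hence in $\prod\pi^{-1}(V_i')\subseteq V$. This verifies the open-set condition on $X^d$, and the Baire argument yields a transitive point.

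The main obstacle is the fiber-containment claim. It needs that points with singleton fibers are dense in every open set of $X$. This is exactly the dense G$_\delta$ in the definition of almost one-to-one, so the claim is fine. Minimality is only used implicitly, in the standard equivalence of definitions. A secondary care point is that the sequence $(R_n)$ need not form a group action, so the Baire step must be done with the explicit union over $n$ and not via invariance.
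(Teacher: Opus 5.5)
Your proof is correct, but it takes a different route from the paper's. The paper never passes to the open-set formulation. It takes one point $\by\in Y^d$ with dense orbit, picks an arbitrary lift $\bx$, and shows that the orbit closure of $\bx$ contains the dense set $\Omega\subseteq X^d$ of points with singleton fibers. Indeed, if $R_{n_i}\by\to\pi^{(d)}(\bx')$ with $\bx'\in\Omega$, every limit point of $R_{n_i}\bx$ lies in the fiber $\{\bx'\}$. That argument is shorter and needs no Baire category step. It uses no minimality either, only density of singleton fibers and the fact that $R_n$ commutes with $\pi^{(d)}$. It even shows that every lift of a transitive point is transitive.

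Your route instead pushes the open-set condition upward through the irreducibility property $\pi^{-1}(W')\subseteq W$, and your proof of that property via the closedness of $\pi$ is correct. You then apply Baire, which gives a dense G$_\delta$ of transitive points in $X^d$ directly.

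The cost is the step ``transitivity on $Y^d$ implies the open-set condition on $Y^d$'', which you only gesture at with ``minimality is used implicitly''. The paper uses ``transitive on $X^d$'' in the weak sense of a single dense orbit (see the first lines of the proof of \cref{thm:B}). For a sequence $(R_n)$ that is not a group action, one dense orbit does not by itself give a dense orbit starting in every open set. You should state why it does here:
\begin{itemize}
\item The set of points of $Y^d$ with dense $(R_n)$-orbit is invariant under every $S_{m_1}\times\dots\times S_{m_d}$ with $m_1,\dots,m_d\in\Z^k$, because these homeomorphisms commute with each $R_n$.
\item This $\Z^{kd}$-action on $Y^d$ is minimal because $(Y,S_1,\dots,S_k)$ is minimal.
\end{itemize}
Hence the transitive points are dense, and your argument goes through.
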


\begin{proof}
    The forward implication is clear. Conversely, assume that $(T_1^{p_1(n)}\times \dots\times T_{d}^{p_{d}(n)})_{n}$ is transitive on $Y^d$. Let $\Omega$ be the dense G$_\delta$ subset of $X^d$ such that $(\pi^{(d)})^{-1}(\pi^{(d)}(\bx))=\{\bx\}$ for every $\bx\in\Omega$. Let $\by\in Y^d$ be a transitive point for the sequence $(T_1^{p_1(n)}\times \dots\times T_{d}^{p_{d}(n)})_{n}$ on $Y^d$, and choose $\bx\in X^d$ such that $\pi^{(d)}(\bx)=\by$. We claim that $\{T_1^{p_1(n)}\times \dots\times T_{d}^{p_{d}(n)} \bx:n\in \Z\}$ is dense in $X^d$. 

    Let $\bx'\in \Omega$. Since $\by$ is transitive on $Y^d$ for the sequence $(T_1^{p_1(n)}\times \dots\times T_{d}^{p_{d}(n)})_{n}$, there exists a sequence $(n_i)_i\subseteq \Z$ such that \begin{align*}
        T_1^{p_1(n_i)}\times \dots\times T_{d}^{p_{d}(n_i)} \pi^{(d)}(\bx) \to \pi^{(d)}(\bx').
    \end{align*}

    We may assume that $T_1^{p_1(n_i)}\times \dots\times T_{d}^{p_{d}(n_i)}\bx\to \bz$ for some $\bz\in X^d$. Since $\bx'\in\Omega$, it follows that $\bz=\bx' \in \overline{\{T_1^{p_1(n)}\times\dots\times T_{d}^{p_{d}(n)} \bx:n\in \Z\}}$. Since $\Omega$ is dense, we conclude that $\{T_1^{p_1(n)}\times \dots\times T_{d}^{p_{d}(n)} \bx:n\in \Z\}$ is dense in $X^d$.
\end{proof}

\begin{proof}[Proof of \cref{thm:B}]
    Assume first that $(T_1^{p_1(n)})_{n},\dots,(T_d^{p_d(n)})_{n}$ are jointly transitive. Then there exists $x\in X$ such that $\{(T_1^{p_1(n)}x,\dots,T_d^{p_d(n)}x):n\in \Z\}$ is dense in $X^d$. Hence $(T_1^{p_1(n)}\times \dots\times T_{d}^{p_{d}(n)})_{n}$ is transitive on $X^d$.
    
    Conversely, assume that $(T_1^{p_1(n)}\times \dots\times T_{d}^{p_{d}(n)})_{n}$ is transitive on  $X^d$. By the O-diagram, we may consider almost one-to-one extensions $X^*$, $X_\infty^*$ of $X$ and $X_\infty$ respectively for which the factor $\pi^*: X^{*}\to X_\infty^*$ is open. By \cref{lemma: lift_transitive_almost_onetoone}, $(T_1^{p_1(n)}\times \dots\times T_{d}^{p_{d}(n)})_{n}$ is transitive on the product space $(X^*)^d$. Then $(T_1^{p_1(n)}\times \dots\times T_{d}^{p_{d}(n)})_{n}$ is transitive on both $X_\infty^d$ and $(X_\infty^*)^d$.

    Let $V_0,V_1,\dots,V_d$ be nonempty open subsets of $X^*$. Since $\pi^*$ is open, the sets $\pi^*(V_0),\pi^*(V_1),\dots,\pi^*(V_d)$ are nonempty open subsets of $X_\infty^*$. By \cref{lemma: nil_case}, there exists $n\in \Z$ such that \begin{align*}
        \pi^{*}(V_0)\cap T_1^{-p_1(n)}\pi^*(V_1) \cap\dots\cap T_d^{-p_d(n)}\pi^*(V_d)\neq \emptyset.
    \end{align*}

    Applying \cref{thm: TFC_distinct_deg} to $\partial_n p_1,\dots,\partial_n p_d$ and the open sets $V_0, T_1^{-p_1(n)}V_1, \dots, T_d^{-p_d(n)}V_d$, we obtain $k\in \Z$ such that\begin{align*}
            V_0\cap T_1^{-p_1(n+k)}V_1\cap \dots\cap T_d^{-p_d(n+k)}V_d \neq \emptyset.
    \end{align*}

    Therefore $(T_1^{p_1(n)})_n,\dots,(T_d^{p_d(n)})_n$ are jointly transitive.
\end{proof}

\bibliographystyle{abbrv}

\begin{thebibliography}{10}

\bibitem{Alvarez_regionally_proximal_tfc_group_actions:2026}
A.~{\'A}lvarez.
\newblock On higher order regionally proximal relations and topological characteristic factors for group actions.
\newblock Preprint, {arXiv}:2605.02304 [math.{DS}] (2026), 2026.

\bibitem{Alvarez_Donoso_cube_struct_univ_nil_applications:2025}
A.~{\'A}lvarez and S.~Donoso.
\newblock Cube structures of the universal minimal system, nilsystems and applications.
\newblock To appear in \emph{Transactions of the American Mathematical Society}.

\bibitem{Auslander_minimal_flows_and_extensions:1988}
J.~Auslander.
\newblock {\em Minimal flows and their extensions}, volume 153 of {\em North-Holland Mathematics Studies}.
\newblock North-Holland Publishing Co., Amsterdam, 1988.
\newblock Notas de Matem\'{a}tica, 122. [Mathematical Notes].

\bibitem{Berend_Bergelson_joint_ergodicity:1984}
D.~Berend and V.~Bergelson.
\newblock Jointly ergodic measure-preserving transformations.
\newblock {\em Israel J. Math.}, 49(4):307--314, 1984.

\bibitem{Bergelson_WM_PET:1987}
V.~Bergelson.
\newblock Weakly mixing {PET}.
\newblock {\em Ergodic Theory Dynam. Systems}, 7(3):337--349, 1987.

\bibitem{Bergelson_Leibman96}
V.~Bergelson and A.~Leibman.
\newblock Polynomial extensions of van der {W}aerden's and {S}zemer\'edi's theorems.
\newblock {\em J. Amer. Math. Soc.}, 9(3):725--753, 1996.

\bibitem{Bergelson_Leibman_Son_joint_erg_generalized_linear:2016}
V.~Bergelson, A.~Leibman, and Y.~Son.
\newblock Joint ergodicity along generalized linear functions.
\newblock {\em Ergodic Theory Dynam. Systems}, 36(7):2044--2075, 2016.

\bibitem{Bressaud_Durand_Maass_Eigenvalues_finite_rank:2010}
X.~Bressaud, F.~Durand, and A.~Maass.
\newblock On the eigenvalues of finite rank {B}ratteli-{V}ershik dynamical systems.
\newblock {\em Ergodic Theory Dynam. Systems}, 30(3):639--664, 2010.

\bibitem{Chu_Frantzikinakis_Host_ergodic_averages_distinct_degree:2011}
Q.~Chu, N.~Frantzikinakis, and B.~Host.
\newblock Ergodic averages of commuting transformations with distinct degree polynomial iterates.
\newblock {\em Proc. Lond. Math. Soc. (3)}, 102(5):801--842, 2011.

\bibitem{deVries_elements_topological_dynamics:1993}
J.~de~Vries.
\newblock {\em Elements of topological dynamics}, volume 257 of {\em Mathematics and its Applications}.
\newblock Kluwer Academic Publishers Group, Dordrecht, 1993.

\bibitem{Dong_Donoso_Maass_Shao_Ye_infinite_step_nil:2013}
P.~Dong, S.~Donoso, A.~Maass, S.~Shao, and X.~Ye.
\newblock Infinite-step nilsystems, independence and complexity.
\newblock {\em Ergodic Theory Dynam. Systems}, 33(1):118--143, 2013.

\bibitem{Donoso_Ferre_Koutsogiannis_Sun_multicorr_joint_erg:2024}
S.~Donoso, A.~Ferr\'{e}~Moragues, A.~Koutsogiannis, and W.~Sun.
\newblock Decomposition of multicorrelation sequences and joint ergodicity.
\newblock {\em Ergodic Theory Dynam. Systems}, 44(2):432--480, 2024.

\bibitem{Donoso_Koutsogiannis_Kuca_Sun_Tsinas_resolving_joint_ergodicity_Hardy:2025}
S.~Donoso, A.~Koutsogiannis, B.~Kuca, W.~Sun, and K.~Tsinas.
\newblock Resolving the joint ergodicity problem for {Hardy} sequences.
\newblock Preprint, {arXiv}:2506.20459 [math.{DS}] (2025), 2025.

\bibitem{Donoso_Koutsogiannis_Kuca_Sun_Tsinas_seminorm_joint_ergodicity_independent_Hardy:2025}
S.~Donoso, A.~Koutsogiannis, B.~Kuca, W.~Sun, and K.~Tsinas.
\newblock Seminorm estimates and joint ergodicity for pairwise independent {Hardy} sequences.
\newblock Preprint, {arXiv}:2410.15130 [math.{DS}] (2025), 2025.

\bibitem{Donoso_Koutsogiannis_Sun_seminorms_polynomials_joint_ergodicity:2022}
S.~Donoso, A.~Koutsogiannis, and W.~Sun.
\newblock Seminorms for multiple averages along polynomials and applications to joint ergodicity.
\newblock {\em J. Anal. Math.}, 146(1):1--64, 2022.

\bibitem{Donoso_Koutsogiannis_Sun_joint_erg_poly_growth:2023}
S.~Donoso, A.~Koutsogiannis, and W.~Sun.
\newblock Joint ergodicity for functions of polynomial growth.
\newblock {\em Israel J. Math.}, 268(1):315--363, 2025.

\bibitem{Donoso_Koutsogiannis_Sun_joint_transitivity:2025}
S.~Donoso, A.~Koutsogiannis, and W.~Sun.
\newblock Joint transitivity for linear iterates.
\newblock {\em Forum Math. Sigma}, 13:Paper No. e34, 13, 2025.

\bibitem{Frantzikinakis_joint_erg_primes:2022}
N.~Frantzikinakis.
\newblock Joint ergodicity of fractional powers of primes.
\newblock {\em Forum Math. Sigma}, 10:Paper No. e30, 30, 2022.

\bibitem{Frantzikinakis_joint_ergodicity_sequences:2023}
N.~Frantzikinakis.
\newblock Joint ergodicity of sequences.
\newblock {\em Adv. Math.}, 417:Paper No. 108918, 63, 2023.

\bibitem{Frantzikinakis_Kuca_seminorm_control_pairwise_dependent_pol:2023}
N.~Frantzikinakis and B.~Kuca.
\newblock Seminorm control for ergodic averages with commuting transformations along pairwise dependent polynomials.
\newblock {\em Ergodic Theory Dynam. Systems}, 43(12):4074--4137, 2023.

\bibitem{Frantzikinakis_Kuca_joint_erg_comm_poly:2025}
N.~Frantzikinakis and B.~Kuca.
\newblock Joint ergodicity for commuting transformations and applications to polynomial sequences.
\newblock {\em Invent. Math.}, 239(2):621--706, 2025.

\bibitem{Furstenberg_ergodic_szemeredi:1977}
H.~Furstenberg.
\newblock Ergodic behavior of diagonal measures and a theorem of {S}zemer\'{e}di on arithmetic progressions.
\newblock {\em J. Anal. Math.}, 31:204--256, 1977.

\bibitem{Furstenberg_Katznelson85}
H.~Furstenberg and Y.~Katznelson.
\newblock An ergodic {S}zemer\'edi theorem for {IP}-systems and combinatorial theory.
\newblock {\em J. d'Analyse Math.}, 45:117--168, 1985.

\bibitem{Furstenberg_Weiss_ergodic_thm_double:1996}
H.~Furstenberg and B.~Weiss.
\newblock A mean ergodic theorem for {$(1/N)\sum^N_{n=1}f(T^nx)g(T^{n^2}x)$}.
\newblock 5:193--227, 1996.

\bibitem{Glasner_top_erg_decomposition:1994}
E.~Glasner.
\newblock Topological ergodic decompositions and applications to products of powers of a minimal transformation.
\newblock {\em J. Anal. Math.}, 64:241--262, 1994.

\bibitem{Glasner_ergodic_theory_joinings:2003}
E.~Glasner.
\newblock {\em Ergodic theory via joinings}, volume 101 of {\em Mathematical Surveys and Monographs}.
\newblock American Mathematical Society, Providence, RI, 2003.

\bibitem{Glasner_Gutman_Ye_higher_regionallyproximal_general_groups:2018}
E.~Glasner, Y.~Gutman, and X.~Ye.
\newblock Higher order regionally proximal equivalence relations for general minimal group actions.
\newblock {\em Adv. Math.}, 333:1004--1041, 2018.

\bibitem{Glasner_Huang_Shao_Weiss_Ye_Topological_characteristic_factors:2020}
E.~Glasner, W.~Huang, S.~Shao, B.~Weiss, and X.~Ye.
\newblock Topological characteristic factors and nilsystems.
\newblock {\em J. Eur. Math. Soc. (JEMS)}, 27(1):279--331, 2025.

\bibitem{Glasscock_Koutsogiannis_Richter_mult_comb_return:2019}
D.~Glasscock, A.~Koutsogiannis, and F.~K. Richter.
\newblock Multiplicative combinatorial properties of return time sets in minimal dynamical systems.
\newblock {\em Discrete Contin. Dyn. Syst.}, 39(10):5891--5921, 2019.

\bibitem{Gutman_Manners_Varju_nilspaces_III:2020}
Y.~Gutman, F.~Manners, and P.~P. Varj{\'u}.
\newblock The structure theory of nilspaces. {III}: {Inverse} limit representations and topological dynamics.
\newblock {\em Adv. Math.}, 365:53, 2020.

\bibitem{Host_Kra_nonconventional_averages_nilmanifolds:2005}
B.~Host and B.~Kra.
\newblock Nonconventional ergodic averages and nilmanifolds.
\newblock {\em Ann. of Math. (2)}, 161(1):397--488, 2005.

\bibitem{Host_Kra_nilpotent_structures_ergodic_theory:2018}
B.~Host and B.~Kra.
\newblock {\em Nilpotent structures in ergodic theory}, volume 236 of {\em Mathematical Surveys and Monographs}.
\newblock American Mathematical Society, Providence, RI, 2018.

\bibitem{Host_Kra_Maass_nilstructure:2010}
B.~Host, B.~Kra, and A.~Maass.
\newblock Nilsequences and a structure theorem for topological dynamical systems.
\newblock {\em Adv. Math.}, 224(1):103--129, 2010.

\bibitem{Huang_Shao_Ye_nilbohr_automorphy:2016}
W.~Huang, S.~Shao, and X.~Ye.
\newblock Nil {B}ohr-sets and almost automorphy of higher order.
\newblock {\em Mem. Amer. Math. Soc.}, 241(1143):v+83, 2016.

\bibitem{Huang_Shao_Ye_top_correspondence_multiple_averages:2019}
W.~Huang, S.~Shao, and X.~Ye.
\newblock Topological correspondence of multiple ergodic averages of nilpotent group actions.
\newblock {\em J. Anal. Math.}, 138(2):687--715, 2019.

\bibitem{Koutsogiannis_Kuca_Sun_2_nil_avr_distinct_degree:2026}
A.~Koutsogiannis, B.~Kuca, and W.~Sun.
\newblock Structure of 2-step nilpotent ergodic averages for distinct-degree polynomials.
\newblock Preprint, {arXiv}:2607.29368 [math.{DS}] (2026), 2026.

\bibitem{Koutsogiannis_Sun_total_joint_ergodicity:2023}
A.~Koutsogiannis and W.~Sun.
\newblock Total joint ergodicity for totally ergodic systems.
\newblock 2023.
\newblock Preprint. arXiv:2302.12278.

\bibitem{Leibman05a}
A.~Leibman.
\newblock Pointwise convergence of ergodic averages for polynomial sequences of translations on a nilmanifold.
\newblock {\em Ergodic Theory Dynam. Systems}, 25(1):201--213, 2005.

\bibitem{Qiu_poly_orbits_tot_minimal:2023}
J.~Qiu.
\newblock Polynomial orbits in totally minimal systems.
\newblock {\em Adv. Math.}, 432:Paper No. 109260, 34, 2023.

\bibitem{Qiu_Xu_Ye_Yu_saturation_product:2025}
J.~Qiu, H.~Xu, X.~Ye, and J.~Yu.
\newblock Saturation of product systems and applications.
\newblock {\em Proc. Steklov Inst. Math.}, 330(1):342--358, 2025.

\bibitem{Qiu_Yu_saturated_cubes_measure:2023}
J.~Qiu and J.~Yu.
\newblock Saturated theorem along cubes for a measure and applications.
\newblock Preprint, {arXiv}:2311.14198 [math.{DS}] (2023), 2023.

\bibitem{Shao_Xu_saturation_R_flows:2025}
S.~Shao and H.~Xu.
\newblock Structure theorems of commuting transformations and minimal {$\Bbb{R} $}-flows.
\newblock {\em Discrete Contin. Dyn. Syst.}, 52:70--114, 2026.

\bibitem{Shao_Ye_regionally_prox_orderd:2012}
S.~Shao and X.~Ye.
\newblock Regionally proximal relation of order {$d$} is an equivalence one for minimal systems and a combinatorial consequence.
\newblock {\em Adv. Math.}, 231(3-4):1786--1817, 2012.

\bibitem{Tsinas_joint_erg_Hardy:2023}
K.~Tsinas.
\newblock Joint ergodicity of {H}ardy field sequences.
\newblock {\em Trans. Amer. Math. Soc.}, 376(5):3191--3263, 2023.

\bibitem{Veech_point-distal_flows:1970}
W.~A. Veech.
\newblock Point-distal flows.
\newblock {\em Amer. J. Math.}, 92:205--242, 1970.

\bibitem{Walsh12}
M.~N. Walsh.
\newblock Norm convergence of nilpotent ergodic averages.
\newblock {\em Ann. of Math. (2)}, 175(3):1667--1688, 2012.

\bibitem{Wu_Yu_saturation_product_pol:2026}
Q.~Wu and J.~Yu.
\newblock Saturation for product systems of polynomials.
\newblock Preprint, {arXiv}:2605.24529 [math.{DS}] (2026), 2026.

\bibitem{Ye_Yu_polynomial_saturation:2025}
X.~Ye and J.~Yu.
\newblock A refined saturation theorem for polynomials and applications.
\newblock {\em Proc. Amer. Math. Soc.}, 153(3):1077--1092, 2025.

\bibitem{Zhang_Zhao_topological_mult_rec_WM_GP:2021}
R.~F. Zhang and J.~J. Zhao.
\newblock Topological multiple recurrence of weakly mixing minimal systems for generalized polynomials.
\newblock {\em Acta Math. Sin. (Engl. Ser.)}, 37(12):1847--1874, 2021.

\end{thebibliography}

\end{document}